\newcommand{\numberset}{\mathbb}
\newcommand{\N}{\numberset{N}}
\newcommand{\R}{\numberset{R}}
\newcommand{\B}{\numberset{B}}
\newcommand{\Pk}{\numberset{P}}
\renewcommand{\epsilon}{\varepsilon}
\renewcommand{\theta}{\vartheta}
\renewcommand{\rho}{\varrho}
\renewcommand{\phi}{\varphi}
\newcommand{\uu}{\mathbf{u}}
\newcommand{\vv}{\mathbf{v}}
\newcommand{\ww}{\mathbf{w}}
\newcommand{\ff}{\mathbf{f}}
\newcommand{\te}{\boldsymbol{\theta}}
\newcommand{\xx}{\boldsymbol{\chi}}
\newcommand{\fg}{\boldsymbol{\phi}}
\newcommand{\fgl}{\boldsymbol{\psi}}
\newcommand{\ddd}{\boldsymbol{\delta}}
\newcommand{\dd}{{\rm div}}
\newcommand{\gr}{\nabla}
\newcommand{\Gr}{\boldsymbol{\nabla}}
\newcommand{\rr}{{\rm rot}}
\newcommand{\cc}{\boldsymbol{{\rm curl}}}
\newcommand{\dl}{\boldsymbol{\Delta}}
\newcommand{\VV}{\mathbf{V}}
\newcommand{\WW}{\mathbf{W}}
\newcommand{\ZZ}{\mathbf{Z}}
\newcommand{\cskew}{\widetilde{c}}
\def\P0{{\Pi^{0, E}_k}}
\def\PN{{\Pi^{\nabla, E}_k}}
\def\PP0{{\boldsymbol{\Pi}^{0, E}_{k-1}}}
\def\ggp{\mathbf{g}_{k-2}^{\oplus}}
\def\ggq{\mathbf{g}_{k}^{\oplus}}
\def\GGp{\mathcal{G}_{k-2}^{\oplus}(E)}
\def\GGq{\mathcal{G}_{k}^{\oplus}(E)}
\lbrace\begin{array}{@{}l@{}}}%
\theoremstyle{definition}
\theoremstyle{remark}
\newtheorem{remark}{Remark}[section]
\theoremstyle{remark}
\newtheorem{test}{Test}[section]
\theoremstyle{plain}
\newtheorem{theorem}{Theorem}[section]
\newtheorem{proposition}{Proposition}[section]
\newtheorem{lemma}{Lemma}[section]
\author[1]{L. Beir\~ao da Veiga \thanks{lourenco.beirao@unimib.it}}
\author[2]{C. Lovadina \thanks{carlo.lovadina@unimi.it}}
\author[1]{G. Vacca \thanks{giuseppe.vacca@unimib.it}}
\affil[1]{Dipartimento di Matematica e Applicazioni,  Universit\`a degli Studi di Milano Bicocca, Via Roberto Cozzi 55 - 20125 Milano, Italy}  
\affil[2]{Dipartimento di Matematica,  Universit\`a degli Studi di Milano, Via Cesare Saldini 50 - 20133 Milano, Italy}
\title{\textbf{Virtual Elements for the Navier-Stokes problem on polygonal meshes}}
\date{\today}
\begin{document}

\maketitle
\begin{abstract}
A family of Virtual Element Methods for the 2D Navier-Stokes equations is proposed and analysed. The schemes provide a discrete velocity field which is point-wise divergence-free. A rigorous error analysis is developed, showing that the methods are stable and optimally convergent. Several numerical tests are presented, confirming the theoretical predictions. A comparison with some mixed finite elements is also performed.
\end{abstract}

% -------------------------------------------------------------------
\section{Introduction}
\label{sec:1}
% -------------------------------------------------------------------

The Virtual Element Method (VEM), introduced in \cite{volley,VEM-hitchhikers}, is a recent paradigm for the approximation of partial differential equation problems that shares the same variational background of the Finite Element Methods. The original motivation of VEM is the need to construct an accurate {\em conforming} Galerkin scheme with the capability to deal with highly general polygonal/polyhedral meshes, including ``hanging vertexes'' and non-convex shapes. Among the Galerkin schemes, VEM has the peculiarity that the discrete spaces consist of functions which are not known pointwise, but a limited set of information about them are at disposal. Nevertheless, the available information are sufficient   
to construct the stiffness matrix and the right-hand side. 
%We remark that VEM is not the only available technology for dealing with polytopal meshes: a brief representative sample of the increasing list of technologies that make use of polygonal/polyhedral meshes can be found in
%\cite{%
%	BLS05bis,BLM11,Brezzi-Buffa-Lipnikov,BLM11book,Bishop13,JA12,LMSXX,MS11,NBM09,RW12,POLY37,%
%	ST04,TPPM10,VW13,Wachspress11,DiPietro-Ern-1,Gillette-1,Wang-1,PolyDG-1}. %
%We here recall, in particular, the polygonal finite elements and the mimetic discretisation schemes.

The VEM has been developed for many problems, see for example \cite{Brezzi-Falk-Marini,Ahmed-et-al:2013,variable-primale,Steklov-VEM,Helmholtz-VEM,supermisti,Berrone-VEM,Benedetto-VEM-2,Berrone-SUPG,nonconforming,Manzini:Russo:Sukumar,plates-zhao,vaccahyper,gardini,senatore}. 
Regarding more directly the Stokes problem, Virtual Elements have been developed in \cite{Antonietti-BeiraodaVeiga-Mora-Verani:20XX,Stokes:nonconforme,Stokes:divfree,Gatica-1}.
Moreover, VEM is experiencing a growing interest also towards Continuum Mechanics problems in the engineering community. We here cite the recent works \cite{GTP14,BeiraoLovaMora,ABLS_part_I,wriggers,BCP,Andersen-geo,Topology-VEM} and \cite{BeiraodaVeiga-Brezzi-Marini:2013,Brezzi-Marini:2012,ADLP-HR}, for instance.       
Finally, some example of other numerical methods for the Stokes or Navier-Stokes equations that can handle polytopal meshes are \cite{dipietro_lemaire,qiu_shi,dipietro_krell}.

In this paper, which may be considered as a natural evolution of our recent divergence-free approach developed in \cite{Stokes:divfree} for the Stokes problem, we apply the VEM to the Navier-Stokes equations in 2D. However, the non-linear convective term in the Navier-Stokes equations leads to the introduction of suitable projectors. These, in turn, suggest to make use of an enhanced discrete velocity space \cite{preprintdarcy}, that is an improvement with respect to that of \cite{Stokes:divfree}.
Instead, the pressure field is approximated by means of standard locally polynomial  functions, without any continuity requirement across the elements. Furthermore, we consider two different discretization of the trilinear form arising from the convective term. The first one is the straightforward VEM version of the continuous trilinear form; however, the projector introduction causes a lack of skew-symmetry, despite the discrete velocity is divergence-free (up to machine precision). This leads to consider the second choice, which is simply the skew-symmetric part of the trilinear form mentioned above (cf. \cite{giraultbook}, for instance). We remark that we develop an error analysis focusing on this latter choice, but the numerical tests concern both alternatives. 
The outcome is a family of Virtual Elements, one per each polynomial order of consistency $k$, with $k\ge 2$.  To our best knowledge, this is the first paper where the VEM technology is addressed to the Navier-Stokes equations.       

The main objectives of the present paper are the following.

\begin{itemize}

\item {\em The development of a rigorous error analysis of the proposed methods.} 
We highlight that our analysis provides some noteworthy element of novelty. Indeed, although we follow rather well-established lines for the error analysis of 2D Navier-Stokes Galerkin methods (see for example \cite{giraultbook}), these need to be combined with new techniques that are peculiar to the VEM framework. In particular, the interpolant construction of Theorem \ref{thm:interpolante} involves new arguments which might be useful even in different contexts (i.e. for other VEM spaces with different regularity requirement).

\item {\em A first but thorough assessment of the actual numerical performance of this new approach.} We provide a set of significant numerical tests, that highlight the features of our VEM approach. In addition to the important flexibility of dealing with general polygonal meshes, the presented scheme (we tested the case $k=2$) displays the following favourable points.

\begin{enumerate}

\item The error components partly decouple: notably, the velocity error does not depend directly on the discrete pressures, but only indirectly through the approximation of the loading and convection terms. This is a consequence of the fact that our methods provide a discrete velocity which is point-wise divergence-free (the isochoric constraint is {\em not} relaxed). 
In some situations, e.g. for hydrostatic fluid problems, the partial decoupling of the errors induces a positive effect on the velocity approximation. 
Moreover, for the same reason, the VEM scheme seems to be more robust for small values of the viscosity parameter when compared with standard mixed finite elements. 
% More precisely, our VEM method provides acceptable approximated solution also for very small viscosity values, while Finite Elements do not even converge. 

\item Another advantage of the method is that, again due to its divergence-free nature, the same Virtual space couple can be used directly also for the approximation of the diffusion problem (in mixed form). This allows for a much easier coupling in Stokes-Darcy problems where different models need to be used in different parts of the domain. This observation adds up with the fact that, thanks to the use of polygons that allow hanging nodes, the gluing of different meshes in different parts of the domain is also much easier.

%\item Compared with corresponding Finite Element competitors (namely, the P2-P1 triangular element and the Q2-P1 quadrilateral element, see \cite{BoffiBrezziFortin}, for instance), our VEM method appears to be more robust with respect to mesh distortions. Indeed, the VEM scheme returns accurate results also for highly distorted meshes, whereas the finite element schemes may suffer or even do not converge.

\item As in \cite{Stokes:divfree}, the particular choice of degrees of freedom adopted for the velocity space yields a diagonal structure in a large part of the pressure-velocity interaction stiffness matrix. As a consequence, and without the need of any static condensation, many internal-to-element degrees of freedom can be automatically ignored when building the linear system.

\end{enumerate}
 	
\end{itemize}

We finally note that, nowadays, there do exist Galerkin-type finite element methods for the Stokes and Navier-Stokes equations that are pressure-robust (that is, the error on the velocity does not depend on the pressure, not even indirectly through the loading or convection terms). Some recent examples are \cite{benchmark,dipietro_linke}.
However, to our best knowledge, all the available schemes work only for standard simplicial/hexahedral meshes. 
Despite our method is not pressure-robust in the sense above, for arbitrary polygonal meshes it is the only conforming divergence-free scheme, a property which yields to important advantages, as outlined in points 1 and 2. Developing a conforming scheme which is both divergence-free and pressure-robust for general polygonal meshes, is currently an open problem.

%	 Our method is divergence free (that as outlined above in points 1 and 2 yields important advantages) but is not pressure-robust in the sense above. On the other hand, all the above schemes in the literature work only for standard simplicial/hexahedral meshes; out method is the only divergence-free scheme in the literature that works for totally general polygonal meshes. Developing a divergence-free and pressure-robust scheme for general polygonal meshes is currently an open problem.}

A brief outline of the paper is the following.  In Section \ref{sec:2} we recall the 2D Navier-Stokes problem, introducing the classical variational formulation and the necessary notations. Section \ref{sec:3} details the proposed discretization procedure. The approximation spaces and all the quantities that form the discrete problem, are introduced and described. 
Section \ref{sec:4} deals with the theoretical analysis, which leads to the optimal error estimates of Theorem \ref{thm:u} and bound \eqref{eq:p-est}.
Finally, Section \ref{sec:5} presents several numerical tests, which highlight the actual performance of our approach, also in comparison with a couple of well-known mixed finite element schemes.

% -------------------------------------------------------------------
\section{The continuous Navier-Stokes equation}
\label{sec:2}
% -------------------------------------------------------------------

We consider the steady Navier-Stokes Equation on a polygonal domain $\Omega \subseteq \R^2$ with homogeneous Dirichlet boundary conditions:
\begin{equation}
\label{eq:ns primale}
\left\{
\begin{aligned}
& \mbox{ find $(\uu,p)$ such that}& &\\
& -  \nu \, \dl \uu + (\Gr \uu ) \,\uu -  \nabla p = \ff\qquad  & &\text{in $\Omega$,} \\
& \dd \, \uu = 0 \qquad & &\text{in $\Omega$,} \\
& \uu = 0  \qquad & &\text{on $\Gamma = \partial \Omega$,}
\end{aligned}
\right.
\end{equation}
with $\nu \in {\mathbb R}$, $\nu > 0$, and where $\uu, p$ are the velocity and the pressure fields, respectively. 
Furthermore, $\dl $, $\dd$, $\Gr$, and $\gr$ denote the vector Laplacian,  
the divergence, the gradient operator for vector fields and the gradient operator for scalar functions. Finally, $\ff$ represents 
the external force, while $\nu$ is the viscosity. We also remark that different boundary conditions can be treated as well. 

Let us consider the spaces
\begin{equation}
\label{eq:spazi continui}
\VV:= \left[ H_0^1(\Omega) \right]^2, \qquad Q:= L^2_0(\Omega) = \left\{ q \in L^2(\Omega) \quad \text{s.t.} \quad \int_{\Omega} q \,{\rm d}\Omega = 0 \right\} 
\end{equation}
with norms
\begin{equation}
\label{eq:norme continue}
\| \vv \|_{\VV} := | \vv|_{\left[ H^1(\Omega) \right]^2} \quad , \qquad 
\|q\|_Q := \| q\|_{L^2(\Omega)}. 
\end{equation}

We assume $\ff \in[L^{2}(\Omega)]^2$ and consider the bilinear forms 
\begin{gather}
\label{eq:forma a}
a(\cdot, \cdot) \colon \VV \times \VV \to \R,    \qquad 
a (\mathbf{u},  \mathbf{v}) := \int_{\Omega}  \, \boldsymbol{\nabla}   \mathbf{u} : \boldsymbol{\nabla}  \mathbf{v} \,{\rm d} \Omega, \qquad \text{for all $\mathbf{u},  \mathbf{v} \in \mathbf{V}$}
\\
\label{eq:forma b}
b(\cdot, \cdot) \colon \mathbf{V} \times Q \to \R \qquad b(\mathbf{v}, q) :=  \int_{\Omega}q\, {\rm div} \,\mathbf{v} \,{\rm d}\Omega \qquad \text{for all $\mathbf{v} \in \mathbf{V}$, $q \in Q$}
\\
\label{eq:forma c}
c(\cdot; \, \cdot, \cdot) \colon \VV \times \VV \times \VV \to \R \qquad c(\ww; \, \uu, \vv) :=  \int_{\Omega} ( \Gr \uu ) \, \ww \cdot \vv  \,{\rm d}\Omega \qquad \text{for all $\ww, \uu, \vv \in \VV$.}
\end{gather}
Then a standard variational formulation of Problem \eqref{eq:ns primale} is:
\begin{equation}
\label{eq:ns variazionale}
\left\{
\begin{aligned}
& \text{find $(\uu, p) \in \VV \times Q$, such that} \\
& \nu \, a(\uu, \vv) + c(\uu; \, \uu, \vv) + b(\vv, p) = (\ff, \vv) \qquad & \text{for all $\vv \in \VV$,} \\
&  b(\uu, q) = 0 \qquad & \text{for all $q \in Q$,}
\end{aligned}
\right.
\end{equation}
where
\[
(\ff, \vv) := \int_{\Omega} \ff \cdot \vv \, {\rm d} \Omega .
\]
It is well known that with the choices \eqref{eq:norme continue}, we have (see for instance \cite{giraultbook}):
\begin{itemize}
\item $a(\cdot, \cdot)$, $b(\cdot, \cdot)$ and $c(\cdot; \, \cdot, \cdot)$ are continuous
\[
|a(\uu, \vv)| \leq  \|\uu\|_{\VV}\|\vv\|_{\VV} \qquad \text{for all $\uu, \vv \in \VV$,}
\]
\[
|b(\vv, q)| \leq  \|\vv\|_{\VV} \|q\|_Q \qquad \text{for all $\vv \in \VV$ and $q \in Q$,}
\]
\[
|c(\ww; \, \uu, \vv)| \leq \widehat{C} \, \|\ww\|_{\VV} \|\uu\|_{\VV} \|\vv\|_{\VV} 
\qquad \text{for all $\ww, \uu, \vv \in \VV$;}
\]
\item $a(\cdot, \cdot)$ is coercive (with coercivity constant $\alpha =1$), i.e. 
\[
a(\mathbf{v}, \mathbf{v}) \geq  \|\mathbf{v}\|^2_{\VV} \qquad \text{for all $\mathbf{v} \in \mathbf{V}$;}
\]
\item  the bilinear form $b(\cdot,\cdot) $ and the space $\VV$ and $Q$ satisfy the inf-sup condition, i.e.   
\begin{equation}
\label{eq:inf-sup}
\exists \, \beta >0 \quad \text{such that} \quad \sup_{\mathbf{v} \in \mathbf{V}, \, \mathbf{v} \neq \mathbf{0}} \frac{b(\mathbf{v}, q)}{ \|\mathbf{v}\|_{\VV}} \geq \beta \|q\|_Q \qquad \text{for all $q \in Q$.}
\end{equation}
\end{itemize}
Therefore, if
\begin{equation}
\label{eq:ns condition}
\gamma := \frac{\widehat{C} \, \|\ff\|_{-1}}{\nu^2} < 1
\end{equation}
 Problem \eqref{eq:ns variazionale} has a unique solution $(\mathbf{u}, p) \in \mathbf{V} \times Q$ such that
\begin{equation}
\label{eq:solution estimates}
\| \uu\|_{\VV} \leq \frac{\| \ff\|_{H^{-1}}}{\nu}.
\end{equation}
%
%\qquad \text{and} \qquad \|p\|_Q \leq \frac{\|\ff\|_{H^{-1}}}{\beta}.
Let us introduce the kernel
\begin{equation}
\label{eq:kercontinuo}
\ZZ := \{ \vv \in \VV \quad \text{s.t.} \quad b(\vv, q) = 0 \quad \text{for all $q \in Q$}. \}
\end{equation}
Then Problem \eqref{eq:ns variazionale} can be formulated in the equivalent kernel form
\begin{equation}
\label{eq:ns variazionale ker}
\left\{
\begin{aligned}
& \text{find $\uu \in \ZZ$, such that} \\
& \nu \, a(\uu, \vv) + c(\uu; \, \uu, \vv) = (\ff, \vv) \qquad & \text{for all $\vv \in \ZZ$,} 
\end{aligned}
\right.
\end{equation}
Finally, by a direct computation it is easy to see that, if $\uu \in \ZZ$ is fixed, then the bilinear form $c(\uu; \,\cdot, \,\cdot) \colon \VV \times \VV \to \R$ is skew-symmetric, i.e. 
\[
c(\uu; \vv, \ww) = -c(\uu; \ww, \vv) \qquad \text{for all $\vv, \ww \in \VV$} .
\]
Therefore we introduce, as usual, also the trilinear form $\cskew(\cdot, \, \cdot, \, \cdot)\colon \VV \times \VV \times \VV \to \R$ 
\begin{equation}
\label{eq:skew}
% \cskew(\cdot; \, \cdot, \cdot) \colon \VV \times \VV \times \VV \to \R \qquad
\cskew(\ww; \, \uu, \vv) :=  \frac{1}{2}  c(\ww; \, \uu, \vv) - \frac{1}{2} c(\ww; \, \vv, \uu) \qquad \text{for all $\ww, \uu, \vv \in \VV$.}
\end{equation}

%----------------------------------------------------------------------------------------------------------------------------

\section{Virtual formulation of the problem}
\label{sec:3}

\subsection{Virtual element space and polynomial projections}
\label{sub:3.1}
 
We outline the Virtual Element discretization of Problem \eqref{eq:ns variazionale}. 
We will make use of various tools from the virtual element technology, that will be described briefly; we refer the interested reader to the papers \cite{Stokes:divfree,preprintdarcy}.

Let $\set{\Omega_h}_h$ be a sequence of decompositions of $\Omega$ into general polygonal elements $E$ with
\[
 h_E := {\rm diameter}(E) , \quad
h := \sup_{E \in \Omega_h} h_E .
\]
We suppose that for all $h$, each element $E$ in $\Omega_h$ fulfils the following assumptions:
\begin{description}
\item [$\mathbf{(A1)}$] $E$ is star-shaped with respect to a ball $B_E$ of radius $ \geq\, \rho \, h_E$, 
\item [$\mathbf{(A2)}$] the distance between any two vertexes of $E$ is $\ge c \, h_E$, 
\end{description}
where $\rho$ and $c$ are positive constants. We remark that the hypotheses above, though not too restrictive in many practical cases, 
can be further relaxed, as investigated in ~\cite{2016stability}. 
Using standard VEM notation, for $k \in \N$, let us define the spaces 
\begin{itemize}
\item $\Pk_k(E)$ the set of polynomials on $E$ of degree $\leq k$  (with the extended notation $\Pk_{-1}(E)=\emptyset$),
\item $\B_k(E) := \{v \in C^0(\partial E) \quad \text{s.t} \quad v_{|e} \in \Pk_k(e) \quad \forall\mbox{ edge } e \subset \partial E\}$,
\item $\mathcal{G}_{k}(E):= \nabla(\Pk_{k+1}(E)) \subseteq [\Pk_{k}(E)]^2$,
\item $\mathcal{G}_{k}^{\oplus}(E) := \mathbf{x}^{\perp}[\Pk_{k-1}(E)] \subseteq [\Pk_{k}(E)]^2$ with $\mathbf{x}^{\perp}:= (x_2, -x_1)$.
\end{itemize}
For any $n \in \N$ and $E \in \Omega_h$ we introduce the following useful polynomial projections:
\begin{itemize}

\item the $\boldsymbol{H^1}$ \textbf{semi-norm projection} ${\Pi}_{n}^{\nabla,E} \colon \VV \to [\Pk_n(E)]^2$, defined by 
\begin{equation}
\label{eq:Pn_k^E}
\left\{
\begin{aligned}
& \int_E \boldsymbol{\nabla} \,\mathbf{q}_n : \boldsymbol{\nabla} (\mathbf{v}- \, {\Pi}_{n}^{\nabla,E}   \mathbf{v}) \, {\rm d} E = 0 \qquad  \text{for all $\vv \in \VV$ and for all $\mathbf{q}_n \in [\Pk_n(E)]^2$,} \\
& \Pi_0^{0,E}(\mathbf{v} - \,  {\Pi}_{n}^{\nabla, E}  \mathbf{v}) = \mathbf{0} \, ,
\end{aligned}
\right.
\end{equation} 

\item the $\boldsymbol{L^2}$\textbf{-projection for scalar functions} $\Pi_n^{0, E} \colon L^2(\Omega) \to \Pk_n(E)$, given by
\begin{equation}
\label{eq:P0_k^E}
\int_Eq_n (v - \, {\Pi}_{n}^{0, E}  v) \, {\rm d} E = 0 \qquad  \text{for all $v \in L^2(E)$  and for all $q_n \in \Pk_n(E)$,} 
\end{equation} 
with obvious extension for vector functions $\Pi_n^{0, E} \colon [L^2(\Omega)]^2 \to [\Pk_n(E)]^2$, and tensor functions 
$\boldsymbol{\Pi}_{n}^{0, E} \colon [L^2(E)]^{2 \times 2} \to [\Pk_{n}(E)]^{2 \times 2}$. 

%\item the \textbf{tensor-valued} $\boldsymbol{L^2}$\textbf{-projection operator}  $\boldsymbol{\Pi}_{n}^{0, E} \colon [L^2(E)]^{2 \times 2} \to [\Pk_{n}(E)]^{2 \times 2}$, given by
%\begin{equation}
%\label{eq:PP0_k^E}
%\int_E \mathbf{P}_{n} \, : \, \left(\mathbf{A} - \boldsymbol{\Pi}_{n}^{0, E} \mathbf{A} \right)    \, {\rm d} E = 0 \qquad \text{for all $\mathbf{A}  \in [L^2(\Omega)]^{2 \times 2}$ and $\mathbf{P}_{n} \in [\Pk_{n}(E)]^{2 \times 2}$.}
%\end{equation}

\end{itemize}
In \cite{Stokes:divfree} we have introduced a new family of Virtual Elements for the Stokes problem on polygonal meshes. In particular, by a proper choice of the Virtual space of velocities, the virtual local spaces are associated to a Stokes-like variational problem on each element. In \cite{preprintdarcy} we have presented an enhanced Virtual space, taking the inspiration from \cite{Ahmed-et-al:2013}, to be used in place of the original one in such a way that the $L^2$-projection can be exactly computable by the DoFs.
In this section we briefly  recall from \cite{Stokes:divfree, preprintdarcy} the notations, the main properties of the Virtual spaces and some details about the construction of the projections.

Let $k \geq 2$ the polynomial degree of accuracy of the method. We introduce  on each element $E \in \Omega_h$ the (original) finite dimensional local virtual space \cite{Stokes:divfree}
\begin{multline}
\label{eq:W_h}
%\label{eq:enalrged}
\mathbf{W}_h^E := \biggl\{  
\mathbf{v} \in [H^1(E)]^2 \quad \text{s.t} \quad \mathbf{v}_{|{\partial E}} \in [\B_k(\partial E)]^2 \, , \biggr.
\\
\left.
\biggl\{
\begin{aligned}
& - \boldsymbol{\Delta}    \mathbf{v}  -  \nabla s \in \mathcal{G}_{k-2}^{\oplus}(E),  \\
& {\rm div} \, \mathbf{v} \in \Pk_{k-1}(E),
\end{aligned}
\biggr. \qquad \text{ for some $s \in L^2(E)$}
\quad \right\}
\end{multline}
where all the operators and equations above are to be interpreted in the distributional sense.
Then we enlarge the previous space
\begin{multline*}
%\label{eq:enalrged}
\mathbf{U}_h^E := \biggl\{  
\mathbf{v} \in [H^1(E)]^2 \quad \text{s.t} \quad \mathbf{v}_{|{\partial E}} \in [\B_k(\partial E)]^2 \, , \biggr.
\\
\left.
\biggl\{
\begin{aligned}
& - \boldsymbol{\Delta}    \mathbf{v}  -  \nabla s \in \mathcal{G}_{k}^{\oplus}(E),  \\
& {\rm div} \, \mathbf{v} \in \Pk_{k-1}(E),
\end{aligned}
\biggr. \qquad \text{ for some $s \in L^2(E)$}
\quad \right\}
\end{multline*}
Now we define the  Virtual Element space $\mathbf{V}_h^E$ as the restriction  of $\mathbf{U}_h^E$ given by
\begin{equation}
\label{eq:V_h^E}
\mathbf{V}_h^E := \left\{ \mathbf{v} \in \mathbf{U}_h^E \quad \text{s.t.} \quad   \left(\mathbf{v} - \Pi^{\nabla,E}_k \mathbf{v}, \, \mathbf{g}_k^{\perp} \right)_{[L^2(E)]^2} = 0 \quad \text{for all $\mathbf{g}_k^{\perp} \in  \mathcal{G}_{k}^{\oplus}(E)/\mathcal{G}_{k-2}^{\oplus}(E)$} \right\} ,
\end{equation}
where the symbol $\mathcal{G}_{k}^{\oplus}(E)/\mathcal{G}_{k-2}^{\oplus}(E)$ denotes the polynomials in $\mathcal{G}_{k}^{\oplus}(E)$ that are $L^2$-orthogonal to all polynomials of $\mathcal{G}_{k-2}^{\oplus}(E)$ (observing that $\mathcal{G}_{k-2}^{\oplus}(E) \subset \mathcal{G}_{k}^{\oplus}(E)$).
From \cite{supermisti,Stokes:divfree,preprintdarcy}, we recall the following properties of the space $\VV_h^E$. 
The proof of the following result can be found in \cite{preprintdarcy}.
\begin{proposition}[Dimension and DoFs]
\label{prp:dofs}
Let $\VV_h^E$ be the space defined in \eqref{eq:V_h^E}. Then the dimension and $\VV_h^E$ is
\begin{equation}
\label{eq:dimensione V_h^E}
\begin{split}
\dim\left( \mathbf{V}_h^E \right) &= \dim\left([\B_k(\partial E)]^2\right) + \dim\left(\mathcal{G}_{k-2}^{\oplus}(E)\right) + \left( \dim(\Pk_{k-1}(E)) - 1\right) \\
&= 2n_E k + \frac{(k-1)(k-2)}{2}  + \frac{(k+1)k}{2} - 1.
\end{split}
\end{equation}
where $n_E$ is the number of vertexes of $E$. Moreover the following linear operators $\mathbf{D_V}$, split into four subsets (see Figure \ref{fig:dofsloc}) constitute a set of DoFs for $\VV_h^E$:
\begin{itemize}
\item $\mathbf{D_V1}$:  the values of $\mathbf{v}$ at the vertices of the polygon $E$,
\item $\mathbf{D_V2}$: the values of $\mathbf{v}$ at $k-1$ distinct points of every edge $e \in \partial E$,
\item $\mathbf{D_V3}$: the moments of $\mathbf{v}$
\[
\int_E \mathbf{v} \cdot \mathbf{g}_{k-2}^{\oplus} \, {\rm d}E \qquad \text{for all $\mathbf{g}_{k-2}^{\oplus} \in \mathcal{G}_{k-2}^{\oplus}(E)$,}
\]
\item $\mathbf{D_V4}$: the moments of ${\rm div} \,\mathbf{v}$ 
\[
\int_E ({\rm div} \,\mathbf{v}) \, q_{k-1} \, {\rm d}E \qquad \text{for all $q_{k-1} \in \Pk_{k-1}(E) / \R$}
\] 
\end{itemize}

\end{proposition}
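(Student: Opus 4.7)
The plan is to combine a dimension count with a unisolvence argument. Since $\VV_h^E$ arises from $\mathbf{U}_h^E$ by imposing $\dim(\mathcal{G}_{k}^{\oplus}(E)/\mathcal{G}_{k-2}^{\oplus}(E)) = 2k-1$ extra $L^2$-orthogonality constraints, I would first count $\dim\mathbf{U}_h^E$ via its natural parametrization (boundary trace in $[\B_k(\partial E)]^2$, the $\mathcal{G}_k^{\oplus}(E)$ right-hand side of the underlying Stokes-like problem, and the divergence in $\Pk_{k-1}(E)/\R$ with mean fixed by the boundary flux) and then subtract the $2k-1$ enhancement conditions. Summing the cardinalities of $\mathbf{D_V1}$--$\mathbf{D_V4}$ independently gives $2n_E + 2n_E(k-1) + \frac{(k-1)(k-2)}{2} + \frac{k(k+1)}{2} - 1$, matching the predicted formula; it remains only to check unisolvence.

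For unisolvence, suppose all DoFs on $\mathbf{v}\in\VV_h^E$ vanish. The vanishing of $\mathbf{D_V1}$ and $\mathbf{D_V2}$, together with edgewise unisolvence of $\Pk_k$ on the $k+1$ nodes per edge, yields $\mathbf{v}|_{\partial E}=\mathbf{0}$. Since $\dd\mathbf{v}\in\Pk_{k-1}(E)$, integration by parts against any $q\in\Pk_{k-1}(E)/\R$ gives $\int_E(\dd\mathbf{v})\,q\,dE=0$ thanks to $\mathbf{D_V4}$ and the vanishing trace, and $\int_E\dd\mathbf{v}\,dE=\int_{\partial E}\mathbf{v}\cdot\mathbf{n}\,ds=0$; hence $\dd\mathbf{v}=0$.

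I would then test the distributional equation $-\boldsymbol{\Delta}\mathbf{v}-\nabla s=\mathbf{g}\in\mathcal{G}_k^{\oplus}(E)$ (satisfied by every $\mathbf{v}\in\VV_h^E$) against $\mathbf{v}$ itself: the boundary and pressure contributions drop out, leaving $|\mathbf{v}|_{H^1(E)}^2=\int_E\mathbf{g}\cdot\mathbf{v}\,dE$. Splitting $\mathbf{g}=\mathbf{g}_{k-2}^{\oplus}+\mathbf{g}_k^{\perp}$ along the $L^2$-orthogonal decomposition $\mathcal{G}_k^{\oplus}(E)=\mathcal{G}_{k-2}^{\oplus}(E)\oplus\mathcal{G}_k^{\oplus}(E)/\mathcal{G}_{k-2}^{\oplus}(E)$, the first integral vanishes by $\mathbf{D_V3}$, and the second is rewritten via the enhancement condition in \eqref{eq:V_h^E} as $\int_E\Pi^{\nabla,E}_k\mathbf{v}\cdot\mathbf{g}_k^{\perp}\,dE$. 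The task thus reduces to proving $\Pi^{\nabla,E}_k\mathbf{v}=\mathbf{0}$.

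For this closing step I would substitute arbitrary $\mathbf{q}_k\in[\Pk_k(E)]^2$ into \eqref{eq:Pn_k^E} and integrate by parts, rewriting $\int_E\boldsymbol{\nabla}\mathbf{q}_k:\boldsymbol{\nabla}\mathbf{v}\,dE$ as $-\int_E\mathbf{v}\cdot\boldsymbol{\Delta}\mathbf{q}_k\,dE$; the decomposition $\boldsymbol{\Delta}\mathbf{q}_k=\nabla r + \mathbf{h}$ with $r\in\Pk_{k-1}(E)$ and $\mathbf{h}\in\mathcal{G}_{k-2}^{\oplus}(E)$ makes both summands disappear (the first via $\dd\mathbf{v}=0$ and the boundary condition, the second via $\mathbf{D_V3}$), so $\Pi^{\nabla,E}_k\mathbf{v}$ must be constant, with value fixed by $\int_E\Pi^{\nabla,E}_k\mathbf{v}\,dE=\int_E\mathbf{v}\,dE$. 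The crux of the argument is computing this mean: I would invoke the identity $v_i=\partial_j(x_i v_j)-x_i\,\dd\mathbf{v}$, which with $\mathbf{v}|_{\partial E}=0$ and $\dd\mathbf{v}=0$ yields $\int_E\mathbf{v}\,dE=\mathbf{0}$, hence $\Pi^{\nabla,E}_k\mathbf{v}=\mathbf{0}$, $|\mathbf{v}|_{H^1(E)}=0$, and by the boundary condition $\mathbf{v}=0$. The main obstacle is precisely that $\int_E\mathbf{v}$ is not a primary DoF and the enhancement transfers the problem onto it; this divergence-form identity is the non-obvious ingredient needed to close the loop.
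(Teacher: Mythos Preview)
The paper itself does not prove this proposition; it simply cites \cite{preprintdarcy}. So there is no in-paper proof to compare against, and your argument must be assessed on its own merits.

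Your unisolvence argument is correct and follows the standard VEM template. The steps $\mathbf{v}|_{\partial E}=\mathbf{0}$, then ${\rm div}\,\mathbf{v}=0$, then $|\mathbf{v}|_{1,E}^2=\int_E\mathbf{g}\cdot\mathbf{v}$ by testing the Stokes-like equation with $\mathbf{v}$, are routine. The handling of the enhancement term is also right: you correctly reduce to showing $\Pi_k^{\nabla,E}\mathbf{v}=\mathbf{0}$, and the integration-by-parts computation with the splitting $\boldsymbol{\Delta}\mathbf{q}_k=\nabla r+\mathbf{h}\in\mathcal{G}_{k-2}(E)\oplus\mathcal{G}_{k-2}^{\oplus}(E)$ is exactly what is needed. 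The identity $v_i=\partial_j(x_iv_j)-x_i\,{\rm div}\,\mathbf{v}$ to recover $\int_E\mathbf{v}=\mathbf{0}$ is the right closing device, given that the paper fixes the constant in $\Pi_k^{\nabla,E}$ via the cell mean $\Pi_0^{0,E}$ (cf.\ \eqref{eq:Pn_k^E}); this is precisely the observation behind Remark~\ref{rm1}.

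One point deserves tightening. In your dimension count you ``subtract the $2k-1$ enhancement conditions'' from $\dim\mathbf{U}_h^E$, which tacitly assumes those linear functionals are independent on $\mathbf{U}_h^E$. The clean way to close this is to observe that your unisolvence argument actually shows more: if $\mathbf{v}\in\mathbf{U}_h^E$ has all $\mathbf{D_V}$-values zero \emph{and} satisfies the $2k-1$ enhancement relations, then $\mathbf{v}=\mathbf{0}$. Since the total number of these linear conditions equals $\dim\mathbf{U}_h^E$ (from your Stokes-parametrization count), the combined map is a bijection on $\mathbf{U}_h^E$; in particular the enhancement functionals are surjective onto $\R^{2k-1}$, so $\dim\mathbf{V}_h^E=\dim\mathbf{U}_h^E-(2k-1)$ and the $\mathbf{D_V}$ are a basis of $(\mathbf{V}_h^E)^*$. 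With this small addition the proof is complete.
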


\begin{figure}[!h]
\center{
\includegraphics[scale=0.20]{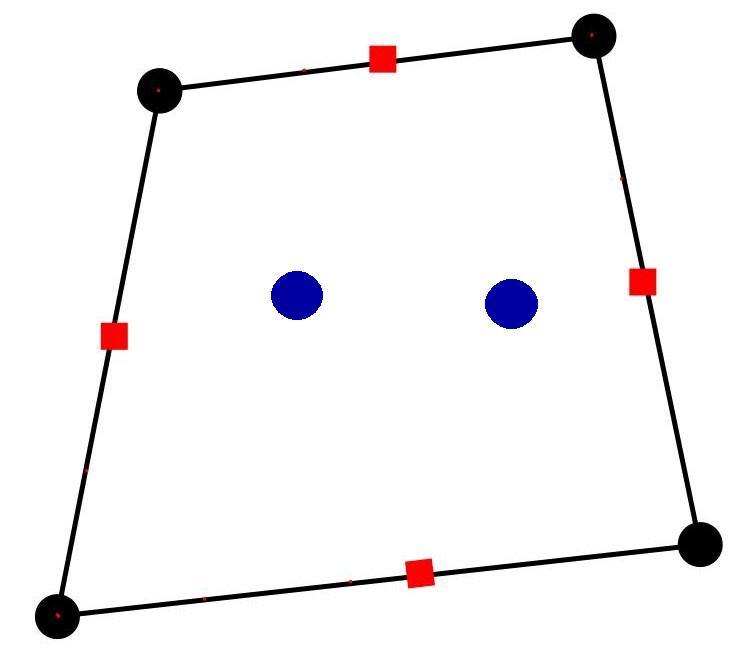} \qquad \qquad
\includegraphics[scale=0.20]{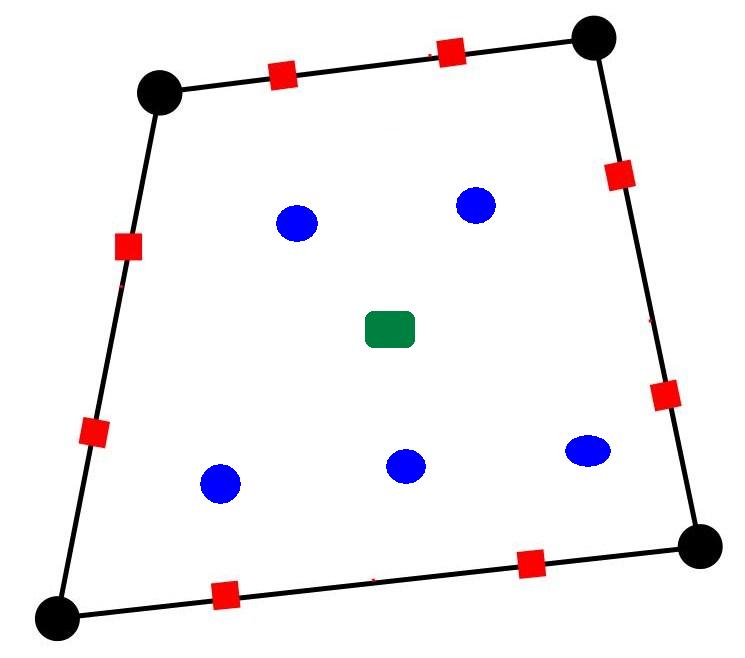}
\caption{Degrees of freedom for $k=2$, $k=3$. We denote $\mathbf{D_V1}$ with black dots, $\mathbf{D_V2}$ with red squares, $\mathbf{D_V3}$ with green rectangles, $\mathbf{D_V4}$ with blue dots inside the element.}
\label{fig:dofsloc}
}
\end{figure}

The proof of the following result can be found in \cite{Stokes:divfree} for $\PN$ and in \cite{preprintdarcy} for the remaining projectors.
\begin{proposition}[Projections and Computability]
\label{prp:projections}
The DoFs $\mathbf{D_V}$ allow us to compute exactly 
\[
\PN \colon \VV_h^E \to [\Pk_k(E)]^2, \qquad
\P0 \colon \VV_h^E \to [\Pk_k(E)]^2, \qquad
\PP0 \colon \Gr(\VV_h^E) \to [\Pk_{k-1}(E)]^{2 \times 2},
\]
in the sense that, given any $\vv_h \in \VV_h^E$, we are able to compute the polynomials
$\PN \vv_h$, $\P0 \vv_h$ and $\PP0\nabla\vv_h$ only using, as unique information, the degree of freedom values $\mathbf{D_V}$ of $\vv_h$.
\end{proposition}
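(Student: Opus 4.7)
The plan is to verify the three projections one by one, starting from the weakest requirement and bootstrapping. The unifying technical lemma, which I would establish first, is the claim that the degrees of freedom $\mathbf{D_V}$ permit the exact computation of the full polynomial $\dd\,\vv_h \in \Pk_{k-1}(E)$ and of all moments $\int_E \vv_h \cdot \mathbf{p}_{k-2}\,{\rm d}E$ for $\mathbf{p}_{k-2} \in [\Pk_{k-2}(E)]^2$. For $\dd\,\vv_h$, the non-constant part of the Legendre/hierarchical expansion is given directly by $\mathbf{D_V4}$, while the zero-order moment is recovered from the boundary DoFs via the divergence theorem $\int_E \dd\,\vv_h = \int_{\partial E} \vv_h \cdot \mathbf{n}$. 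For the vector moments I would use the Helmholtz-type decomposition $[\Pk_{k-2}(E)]^2 = \nabla \Pk_{k-1}(E) \oplus \mathcal{G}_{k-2}^{\oplus}(E)$: the gradient part reduces, after integration by parts, to boundary integrals plus $-\int_E q_{k-1}\,\dd\,\vv_h\,{\rm d}E$, which is now computable; the second part is exactly $\mathbf{D_V3}$.

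Once this is in hand, $\PN \vv_h$ follows from the standard VEM argument: the defining equation $\int_E \nabla \mathbf{q}_k : \nabla (\vv_h - \PN \vv_h) = 0$ is rewritten, via integration by parts, as boundary integrals of $\vv_h$ against $\nabla \mathbf{q}_k \cdot \mathbf{n}$ (computable from $\mathbf{D_V1}$, $\mathbf{D_V2}$) plus a bulk term $-\int_E \Delta \mathbf{q}_k \cdot \vv_h$, where $\Delta \mathbf{q}_k \in [\Pk_{k-2}(E)]^2$, so the preliminary lemma applies. The normalization $\Pi_0^{0,E}(\vv_h - \PN \vv_h) = 0$ requires the constant moments of $\vv_h$, which (since $k \geq 2$) live inside $\nabla \Pk_1(E) \subset \nabla \Pk_{k-1}(E)$ and are therefore also covered by the preliminary lemma.

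The projection $\PP0 \nabla \vv_h$ is then immediate: by definition one needs $\int_E \nabla \vv_h : \mathbf{q}_{k-1}\,{\rm d}E$ for arbitrary $\mathbf{q}_{k-1} \in [\Pk_{k-1}(E)]^{2\times 2}$, which after integration by parts becomes boundary integrals of $\vv_h$ against $\mathbf{q}_{k-1} \mathbf{n}$ minus $\int_E \vv_h \cdot \dd\,\mathbf{q}_{k-1}$, and now $\dd\,\mathbf{q}_{k-1} \in [\Pk_{k-2}(E)]^2$ is handled by the preliminary lemma.

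The most delicate step, and the one for which the enhanced space \eqref{eq:V_h^E} was introduced, is the computation of $\P0 \vv_h$. Here the target $[\Pk_k(E)]^2$ is split as $\nabla \Pk_{k+1}(E) \oplus \mathcal{G}_k^{\oplus}(E)$, and $\mathcal{G}_k^{\oplus}(E)$ is further split as $\mathcal{G}_{k-2}^{\oplus}(E) \oplus \bigl(\mathcal{G}_k^{\oplus}(E)/\mathcal{G}_{k-2}^{\oplus}(E)\bigr)$. Moments against the gradient piece reduce, via integration by parts, to boundary terms and $-\int_E q_{k+1}\,\dd\,\vv_h$, which is the integral of the known polynomial $q_{k+1}\,\dd\,\vv_h$ and thus explicitly computable; moments against $\mathcal{G}_{k-2}^{\oplus}$ are precisely $\mathbf{D_V3}$; finally, for $\mathbf{g}_k^{\perp} \in \mathcal{G}_k^{\oplus}(E)/\mathcal{G}_{k-2}^{\oplus}(E)$ the enhancement identity built into \eqref{eq:V_h^E} yields $\int_E \vv_h \cdot \mathbf{g}_k^{\perp} = \int_E \PN \vv_h \cdot \mathbf{g}_k^{\perp}$, and the right-hand side is an explicit integral of polynomials since $\PN \vv_h$ was computed in the second step. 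The main conceptual obstacle is exactly this last bootstrapping: $\P0$ relies on $\PN$, which in turn relies on the preliminary divergence/vector-moments lemma, so the order of the arguments is essential and one must verify that no circularity is introduced by the enhancement constraint.
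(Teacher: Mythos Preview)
Your proposal is correct and follows precisely the standard route; note that the paper itself does not give a proof here but defers to \cite{Stokes:divfree} for $\PN$ and to \cite{preprintdarcy} for $\P0$ and $\PP0$, and the arguments in those references are exactly the ones you sketch (divergence recovery from $\mathbf{D_V4}$ plus the divergence theorem, the Helmholtz-type splitting $[\Pk_{k-2}(E)]^2=\nabla\Pk_{k-1}(E)\oplus\mathcal{G}_{k-2}^{\oplus}(E)$ for the interior moments, integration by parts for $\PN$ and $\PP0$, and the enhancement identity \eqref{eq:V_h^E} bootstrapped through $\PN$ for the high-order $\mathcal{G}_k^{\oplus}$ moments needed by $\P0$). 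Your remark on the non-circularity of the bootstrap is also the right observation: the constraint in \eqref{eq:V_h^E} involves $\PN$, but the computation of $\PN$ uses only boundary DoFs, $\mathbf{D_V3}$, $\mathbf{D_V4}$ and never the enhancement condition itself.
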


\begin{remark}
\label{rm1}
Using the enhanced space $\VV_h^E$ and following the same ideas of \cite{Stokes:divfree,preprintdarcy}, it is possible to improve the results of Proposition \ref{prp:projections} and compute exactly also the following higher order projections
\[
\Pi^{\nabla, E}_{k+2} \colon \VV_h^E \to [\Pk_{k+2}(E)]^2, \qquad
\boldsymbol{\Pi}^{0, E}_{k+1} \colon \Gr(\VV_h^E) \to [\Pk_{k+1}(E)]^{2 \times 2}.
\] 
Moreover, given any polynomial $q_n$ of arbitrary degree n and any $\vv \in \VV_h^E$, an integration by parts shows that we can compute the moment
\[
\int_E \nabla q_n \cdot \vv \, {\rm d}E. 
\] 
\end{remark}
For what concerns the pressures we take the standard finite dimensional space
\begin{equation}
\label{eq:Q_h^E}
Q_h^E := \Pk_{k-1}(E) 
\end{equation}
having dimension
\begin{equation*}
%\label{eq:dimensione Q_h^E}
\dim(Q_h^E) = \dim(\Pk_{k-1}(E))  = \frac{(k+1)k}{2}.
\end{equation*}
The corresponding degrees of freedom are chosen defining for each $q\in Q_h^E$ the following linear operators $\mathbf{D_Q}$:
\begin{itemize}
\item $\mathbf{D_Q}$: the moments up to order $k-1$ of $q$, i.e.
\[
\int_E q \, p_{k-1} \, {\rm d}E \qquad \text{for all $p_{k-1} \in \Pk_{k-1}(E)$.}
\]
\end{itemize}
Finally we define the global virtual element spaces as
\begin{equation}
\label{eq:V_h}
\mathbf{V}_h := \{ \mathbf{v} \in [H^1_0(\Omega)]^2  \quad \text{s.t} \quad \mathbf{v}_{|E} \in \mathbf{V}_h^E  \quad \text{for all $E \in \Omega_h$} \}
\end{equation} 
and
\begin{equation}
\label{eq:Q_h}
Q_h := \{ q \in L_0^2(\Omega) \quad \text{s.t.} \quad q_{|E} \in  Q_h^E \quad \text{for all $E \in \Omega_h$}\},
\end{equation}
with the obvious associated sets of global degrees of freedom. A simple computation shows that:
\begin{equation*}
%\label{eq:Vdofs}
\dim(\mathbf{V}_h) = n_P \left( \frac{(k+1)k}{2} -1  +  \frac{(k-1)(k-2)}{2} \right) 
+ 2(n_V + (k-1) n_e)
\end{equation*}
and
\begin{equation*}
%\label{eq:Qdofs}
\dim(Q_h) = n_P \frac{(k+1)k}{2} - 1 ,
\end{equation*}
where $n_P$ is the number of elements, $n_e$, $n_V$ is the number of  internal edges and vertexes in $\Omega_h$.
As observed in \cite{Stokes:divfree}, we remark that
\begin{equation}\label{eq:divfree}
{\rm div}\, \mathbf{V}_h\subseteq Q_h .
\end{equation}
%
%

%

% -----------------------------------------------------------------------------------------------------------------------------
% -----------------------------------------------------------------------------------------------------------------------------
\subsection{Discrete bilinear forms and load term approximation}
\label{sub:3.2}

The next step in the construction of our method is to define a discrete version of the bilinear forms $a(\cdot, \cdot)$ and $b(\cdot, \cdot)$ given in \eqref{eq:forma a} and \eqref{eq:forma b} and trilinear form $c(\cdot; \cdot, \cdot)$ in \eqref{eq:forma c}.
Here and in the rest of the paper the symbol $C$ will indicate a generic positive quantity, independent of the mesh size (and of $\nu$), but may depend on $\Omega$ and on the polynomial degree $k$. Furthermore, $C$ may vary at each occurrence.
First of all we decompose into local contributions the bilinear forms $a(\cdot, \cdot)$, $b(\cdot, \cdot)$, the trilinear form $c(\cdot; \cdot, \cdot)$ and the norms $\|\cdot\|_{\mathbf{V}}$, $\|\cdot \|_Q$ by defining 
\begin{equation*}
%\label{eq:forma a locali continue}
a (\mathbf{u},  \mathbf{v}) =: \sum_{E \in \Omega_h} a^E (\mathbf{u},  \mathbf{v}) \qquad \text{for all $\mathbf{u},  \mathbf{v} \in \mathbf{V}$}
\end{equation*}
\begin{equation*}
%\label{eq:forma b locali continue}
b (\mathbf{v},  q) =: \sum_{E \in \Omega_h} b^E (\mathbf{v},  q) \qquad \text{for all $\mathbf{v} \in \mathbf{V}$ and $q \in Q$,}
\end{equation*}
\begin{equation*}
c(\ww; \, \uu, \vv) =: \sum_{E \in \Omega_h} c^E(\ww; \, \uu, \vv) \qquad \text{for all $\ww, \uu, \vv \in \VV$.}
\end{equation*}
and 
\begin{equation*}
%\label{eq:norme locali}
\|\mathbf{v}\|_{\mathbf{V}} =: \left(\sum_{E \in \Omega_h} \|\mathbf{v}\|^2_{\mathbf{V}, E}\right)^{1/2} \quad \text{for all $\mathbf{v} \in \mathbf{V}$,} \qquad \|q\|_Q =: \left(\sum_{E \in \Omega_h} \|q\|^2_{Q, E}\right)^{1/2} \quad \text{for all $q \in Q$.}
\end{equation*}
For what concerns $b(\cdot, \cdot)$, we simply  set 
\begin{equation}\label{bhform}
b(\mathbf{v}, q) = \sum_{E \in \Omega_h} b^E(\mathbf{v}, q) = \sum_{E \in \Omega_h}  \int_E {\rm div} \, \mathbf{v} \, q \,{\rm d}E \qquad \text{for all $\mathbf{v} \in \mathbf{V}_h$, $q \in Q_h$},
\end{equation}
i.e. as noticed in \cite{Stokes:divfree} we do not introduce any approximation of the bilinear form. We notice that~\eqref{bhform}  
is computable from the degrees of freedom $\mathbf{D_V1}$, $\mathbf{D_V2}$ and $\mathbf{D_V4}$, since $q$ is polynomial in each element $E \in \Omega_h$. 
We now define discrete versions of the forms $a(\cdot, \cdot)$ (cf.~\eqref{eq:forma a}) and  $c(\cdot; \,\cdot, \cdot)$ (cf.~\eqref{eq:forma c}) that need  to be dealt with in a more careful way.
First of all, we note that for an arbitrary pair $(\mathbf{u},\mathbf{v} )\in \mathbf{V}_h^E \times \mathbf{V}_h^E $, the quantity $a_h^E(\mathbf{w}, \mathbf{v})$ is not computable.  Therefore, following a standard procedure in the VEM framework, we define a computable discrete local bilinear form
\begin{equation}
\label{eq:a_h^E} 
a_h^E(\cdot, \cdot) \colon \mathbf{V}_h^E \times \mathbf{V}_h^E \to \R
\end{equation}
approximating the continuous form $a^E(\cdot, \cdot)$, and defined by
\begin{equation}
\label{eq:a_h^E def}
a_h^E(\mathbf{u}, \mathbf{v}) := a^E \left(\PN \mathbf{u}, \, \PN \mathbf{v} \right) + \mathcal{S}^K \left((I - \PN) \mathbf{u}, \, (I -\PN) \mathbf{v} \right)
\end{equation}
for all $\mathbf{u}_h, \mathbf{v}_h \in \mathbf{V}_h^E$, where the (symmetric) stabilizing bilinear form $\mathcal{S}^E \colon \mathbf{V}_h^E \times \mathbf{V}_h^E \to \R$, satisfies (see Remark \ref{rm:stabilizzazione})
\begin{equation}
\label{eq:S^E}
\alpha_* a^E(\mathbf{v}, \mathbf{v}) \leq  \mathcal{S}^E(\mathbf{v}, \mathbf{v}) \leq \alpha^* a^E(\mathbf{v}, \mathbf{v}) \qquad \text{for all $\mathbf{v} \in \mathbf{V}_h$ such that ${\Pi}_{k}^{\nabla ,E} \mathbf{v}= \mathbf{0}$}
\end{equation}
with $\alpha_*$ and $\alpha^*$  positive  constants independent of the element $E$.
It is straightforward to check that Definition~\eqref{eq:Pn_k^E} and properties~\eqref{eq:S^E} imply 
\begin{itemize}
\item $\mathbf{k}$\textbf{-consistency}: for all $\mathbf{q}_k \in [\Pk_k(E)]^2$ and $\mathbf{v} \in \mathbf{V}_h^K$
\begin{equation}\label{eq:consist}
a_h^E(\mathbf{q}_k, \mathbf{v}) = a^E( \mathbf{q}_k, \mathbf{v});
\end{equation}
\item \textbf{stability}:  there exist  two positive constants $\alpha_*$ and $\alpha^*$, independent of $h$ and $E$, such that, for all $\mathbf{v} \in \mathbf{V}_h^E$, it holds
\begin{equation}\label{eq:stabk}
\alpha_* a^E(\mathbf{v}, \mathbf{v}) \leq a_h^E(\mathbf{v}, \mathbf{v}) \leq \alpha^* a^E(\mathbf{v}, \mathbf{v}).
\end{equation}
\end{itemize}

\begin{remark}
\label{rm:stabilizzazione}
Condition \eqref{eq:S^E}  essentially requires that the stabilizing term $\mathcal{S}^E(\mathbf{v}_h, \mathbf{v}_h)$ scales as $a^E(\mathbf{v}_h, \mathbf{v}_h)$. For instance, following the most standard VEM choice (cf. \cite{volley,VEM-hitchhikers,2016stability}), denoting with $\vec{\mathbf{u}}_h$, $\vec{\mathbf{v}}_h \in \R^{N_{DoFs, E}}$
the vectors containing the values of the $N_{DoFs, E}$ degrees of freedom associated to $\mathbf{u}_h, \mathbf{v}_h \in \mathbf{V}_h^E$, we set
\[
\mathcal{S}^E (\mathbf{u}_h, \mathbf{v}_h) =  \alpha^E \,  \vec{\mathbf{u}}_h^T \vec{\mathbf{v}}_h ,
\] 
where $\alpha^E$ is a suitable positive constant. For example, in the numerical tests presented in Section \ref{sec:5}, we have chosen $\alpha^E$ as the mean value of the non-zero eigenvalues of the matrix stemming from the  
term $a^E \left(\PN \mathbf{u}_h,\,  \PN \mathbf{v}_h \right) $ in  \eqref{eq:a_h^E def}.
\end{remark}

Finally we define the global approximated bilinear form $a_h(\cdot, \cdot) \colon \mathbf{V}_h \times \mathbf{V}_h \to \R$ by simply summing the local contributions:
\begin{equation}
\label{eq:a_h}
a_h(\mathbf{u}_h, \mathbf{v}_h) := \sum_{E \in \Omega_h}  a_h^E(\mathbf{u}_h, \mathbf{v}_h) \qquad \text{for all $\mathbf{u}_h, \mathbf{v}_h \in \mathbf{V}_h$.}
\end{equation}
%-----------------------------------------------------------------------------------------------------------------------------
%-----------------------------------------------------------------------------------------------------------------------------

For what concerns the approximation of the local trialinear form $c^E(\cdot; \, \cdot, \cdot)$, we set
\begin{equation}
\label{eq:c_h^E}
c_h^E(\ww_h; \, \uu_h, \vv_h) := \int_E \left[ \left(\PP0 \, \Gr \uu_h  \right)  \left(\P0 \ww_h \right) \right] \cdot \P0 \vv_h \, {\rm d}E 
\qquad \text{for all $\ww_h, \uu_h, \vv_h \in \VV_h$}
\end{equation}
and note that all quantities in the previous formula are computable, in the sense of Proposition \ref{prp:projections}. 
As usual we define the global approximated trilinear form by adding the local contributions:
\begin{equation}
\label{eq:c_h}
c_h(\ww_h; \, \uu_h, \vv_h) := \sum_{E \in \Omega_h}  c_h^E(\ww_h; \, \uu_h, \vv_h), \qquad \text{for all $\ww_h, \uu_h, \vv_h \in \VV_h$.}
\end{equation}
We first notice that the form $c_h(\cdot; \, \cdot, \cdot)$ is immediately extendable to the whole $\VV$ (simply apply the same definition for any $\ww, \uu, \vv \in \VV$) . Moreover, we now show that it is continuous on $\VV$, uniformly in $h$.
\begin{proposition}
\label{prp:continuity-ch}
Let 
\begin{equation}
\label{eq:CC}
\widehat{C}_h := \sup_{\ww, \uu, \vv \in \VV} \frac{|c_h(\ww; \, \uu, \vv)|}{\|\ww\|_{\VV} \|\uu\|_{\VV} \|\vv\|_{\VV}}.
\end{equation}
Then $\widehat{C}_h$ is uniformly bounded, i.e. the trilinear form $c_h(\cdot; \, \cdot, \cdot)$ is uniformly continuous with respect to $h$.
\end{proposition}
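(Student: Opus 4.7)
The plan is to establish the bound element-by-element, exploiting the continuity of the $L^2$-projections in higher Lebesgue norms and then invoking the Sobolev embedding $H^1(\Omega) \hookrightarrow L^4(\Omega)$, which holds in two dimensions.

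First I would work on a single element $E \in \Omega_h$ and split the three factors in the definition \eqref{eq:c_h^E} via the generalized H\"older inequality with exponents $(2,4,4)$:
\[
|c_h^E(\ww;\uu,\vv)| \leq \|\PP0 \Gr \uu \|_{[L^2(E)]^{2\times 2}} \, \|\P0 \ww \|_{[L^4(E)]^2} \, \|\P0 \vv \|_{[L^4(E)]^2}.
\]
The first factor is controlled by $\|\Gr \uu\|_{[L^2(E)]^{2\times 2}}$ since $\PP0$ is an $L^2$-orthogonal projection. The delicate point is the $L^4$-bound on $\P0 \ww$ and $\P0 \vv$; here I would show that under assumptions \textbf{(A1)}--\textbf{(A2)} there exists $C>0$, independent of $E$, such that $\|\P0 \vv\|_{[L^4(E)]^2} \leq C \|\vv\|_{[L^4(E)]^2}$ for all $\vv \in [L^4(E)]^2$.

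The $L^4$ stability is the technical heart of the argument. I would obtain it via a standard chain: apply the inverse inequality on the polynomial space $[\Pk_k(E)]^2$ (valid on shape-regular star-shaped elements) to pass from $L^4$ to $L^2$ on $\P0 \vv$, then use the $L^2$-contractivity of the projection, and finally re-inject $v$ through H\"older's inequality to pay back the inverse inequality scaling with $|E|^{1/2}\lesssim h_E$. Schematically,
\[
\|\P0 \vv\|_{[L^4(E)]^2} \leq C\,h_E^{-1/2} \|\P0 \vv\|_{[L^2(E)]^2} \leq C\,h_E^{-1/2} \|\vv\|_{[L^2(E)]^2} \leq C\,h_E^{-1/2} |E|^{1/4} \|\vv\|_{[L^4(E)]^2} \leq C \|\vv\|_{[L^4(E)]^2},
\]
where all constants depend only on $\rho$, $c$ in \textbf{(A1)}--\textbf{(A2)} and on $k$.

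Finally I would assemble the global bound. Summing the local estimates and applying the discrete H\"older inequality with exponents $(2,4,4)$ on the sum over $E \in \Omega_h$ yields
\[
|c_h(\ww;\uu,\vv)| \leq C\, \|\Gr \uu\|_{[L^2(\Omega)]^{2\times 2}} \, \|\ww\|_{[L^4(\Omega)]^2} \, \|\vv\|_{[L^4(\Omega)]^2}.
\]
Invoking the 2D Sobolev embedding $[H^1_0(\Omega)]^2 \hookrightarrow [L^4(\Omega)]^2$ gives $\|\ww\|_{[L^4(\Omega)]^2} \leq C \|\ww\|_{\VV}$ and similarly for $\vv$, while $\|\Gr \uu\|_{[L^2(\Omega)]^{2\times 2}} = \|\uu\|_{\VV}$. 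This produces a uniform (in $h$) constant $\widehat{C}_h$ in \eqref{eq:CC}, as required.
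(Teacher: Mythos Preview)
Your proposal is correct and follows essentially the same route as the paper's proof: element-wise H\"older with exponents $(2,4,4)$, $L^2$-contractivity of $\PP0$ on the gradient factor, the $L^4$-stability of $\P0$ obtained via the chain inverse estimate $\to$ $L^2$-contractivity $\to$ H\"older with the compensating factor $h_E^{-1/2}|E|^{1/4}\leq C$, discrete H\"older with exponents $(2,4,4)$ to globalize, and finally the 2D Sobolev embedding $H^1_0\hookrightarrow L^4$. The only cosmetic difference is that the paper expands the trilinear form into scalar components $\partial \uu_i/\partial x_j$, $\ww_j$, $\vv_i$ before applying the estimates, whereas you keep the tensor/vector notation throughout.
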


\begin{proof}
By a direct computation it holds
\begin{equation}
\label{eq:continuity-ch1}
\begin{split} 
c_h(\ww; \, \uu, \vv) & = \sum_{E \in \Omega_h} c_h^E(\ww; \, \uu, \vv) = 
\sum_{E \in \Omega_h} \int_E \left[ \left(\PP0 \, \Gr \uu  \right)  \left(\P0 \ww \right) \right] \cdot \P0 \vv \, {\rm d}E \\
% & = \sum_{i,j=1}^2 \, \sum_{E \in \Omega_h} \int_E \left[ \left(\P0 \, \frac{\partial \uu_i}{\partial x_j}  \right)  \left(\P0 \ww_j \right) \right] \cdot \P0 \vv_i \, {\rm d}E \\
& \leq \sum_{i,j=1}^2 \, \sum_{E \in \Omega_h} 
\left\|\P0 \, \frac{\partial \uu_i}{\partial x_j} \right\|_{0,E}
\left\| \P0 \ww_j\right\|_{L^4(E)}
\left\| \P0 \vv_i\right\|_{L^4(E)}
\end{split}
\end{equation}
where the last inequality follows by using H\"older inequality.
Let us analyse each term in the right hand side of \eqref{eq:continuity-ch1}. Employing the continuity of the projection $\P0$ with respect the $L^2$-norm we easily get
\begin{equation}
\label{eq:continuity-ch2}
\left\|\P0 \, \frac{\partial \uu_i}{\partial x_j} \right\|_{0,E} \leq \left\| \frac{\partial \uu_i}{\partial x_j} \right\|_{0,E}.
\end{equation}
For what concerns the second term (and analogously for the third one) we get
\begin{equation}
\label{eq:continuity-ch3}
\begin{aligned}
\left\| \P0 \ww_j\right\|_{L^4(E)} & \leq C h_E^{-\frac{1}{2}} \left\| \P0 \ww_j\right\|_{0,E}   &\text{(inverse estimate for polynomials)} \\
&  \leq C h_E^{-\frac{1}{2}} \left\| \ww_j\right\|_{0,E}   &\text{(continuity of $\P0$ with respect $\|\cdot\|_{0,E}$ )} \\
&  \leq C h_E^{-\frac{1}{2}} \,  \|1\|_{L^4(E)} \, \left\| \ww_j\right\|_{L^4(E)}   &\text{(H\"older inequality )} \\
&  \leq C h_E^{-\frac{1}{2}} \,  (h_E^2)^{\frac{1}{4}} \, \left\| \ww_j\right\|_{L^4(E)}   &\text{(definition of $h_E$)} \\
&\leq C \, \left\| \ww_j\right\|_{L^4(E)}. 
\end{aligned}
\end{equation}
Collecting \eqref{eq:continuity-ch2} and \eqref{eq:continuity-ch3} in \eqref{eq:continuity-ch1} we obtain
\begin{equation}
\label{eq:continuity-ch4}
c_h(\ww; \, \uu, \vv) \leq C  \sum_{i,j=1}^2 \, \sum_{E \in \Omega_h} \left\| \frac{\partial \uu_i}{\partial x_j} \right\|_{0,E} \, \left\| \ww_j\right\|_{L^4(E)} \, \left\| \vv_i\right\|_{L^4(E)}.
\end{equation} 
Now applying H\"older inequality (for sequences) we get
\begin{equation}
\label{eq:continuity-ch5}
\begin{split}
c_h(\ww; \, \uu, \vv) & \leq C  \sum_{i,j=1}^2 \, \left( \sum_{E \in \Omega_h} \left\| \frac{\partial \uu_i}{\partial x_j} \right\|^2_{0,E} \right)^{\frac{1}{2}} \, \left(\sum_{E \in \Omega_h} \left\| \ww_j\right\|^4_{L^4(E)} \right)^{\frac{1}{4}} \, \left( \sum_{E \in \Omega_h}\left\| \vv_i\right\|^4_{L^4(E)} \right)^{\frac{1}{4}} \\
& \leq C  \sum_{i,j=1}^2 \left\| \frac{\partial \uu_i}{\partial x_j} \right\|_{0} \,  \left\| \ww_j\right\|_{L^4(\Omega)} \,  \left\| \vv_i\right\|_{L^4(\Omega)}.
\end{split}
\end{equation}
Finally, since $H^1(\Omega) \subset L^4(\Omega)$, by Sobolev embedding it holds 
\[
c_h(\ww; \, \uu, \vv) \leq \widehat{C}_h \, \|\uu\|_{\VV} \|\ww\|_{\VV} \|\vv\|_{\VV},
\]
where the constant $\widehat{C}_h$ does not depend on $h$.
\end{proof}
We can also define the local discrete skew-symmetric trilinear form $\widetilde{c}_h^E(\cdot; \, \cdot, \cdot)\colon  \VV \times \VV \times \VV \to \R$ simply setting
\begin{equation}
\label{eq:ctilde_h^E}
\widetilde{c}_h^E(\ww; \, \uu, \vv) := \frac{1}{2}c_h^E(\ww; \, \uu, \vv) - \frac{1}{2}c_h^E(\ww; \, \vv, \uu)
\qquad \text{for all $\ww, \uu, \vv \in \VV$}
\end{equation}
with obvious global extension
\begin{equation}
\label{eq:ctilde_h}
\widetilde{c}_h(\ww; \, \uu, \vv) := \sum_{E \in \Omega_h}  \widetilde{c}_h^E(\ww; \, \uu, \vv), \qquad \text{for all $\ww, \uu, \vv \in \VV$,}
\end{equation}
that is (obviously) still continuous and computable.

The last step consists in constructing a computable approximation of the right-hand side $(\mathbf{f}, \, \mathbf{v})$ in \eqref{eq:ns variazionale}.  We define the approximated load term $\mathbf{f}_h$ as 
\begin{equation}
\label{eq:f_h}
\mathbf{f}_h := \Pi_{k}^{0,E} \mathbf{f} \qquad \text{for all $E \in \Omega_h$,}
\end{equation}
and consider:
\begin{equation}
\label{eq:right}
(\mathbf{f}_h, \mathbf{v}_h)  = \sum_{E \in \Omega_h} \int_E \mathbf{f}_h \cdot \mathbf{v}_h \, {\rm d}E = \sum_{E \in \Omega_h} \int_E \Pi_{k}^{0,E} \mathbf{f} \cdot \mathbf{v}_h \, {\rm d}E = \sum_{E \in \Omega_h} \int_E \mathbf{f} \cdot \Pi_{k}^{0,E}  \mathbf{v}_h \, {\rm d}E.
\end{equation}
We observe that \eqref{eq:right} can be exactly computed from $\mathbf{D_V}$ for all $\mathbf{v}_h \in \mathbf{V}_h$ (see Proposition \ref{prp:projections}). 
%-----------------------------------------------------------------------------------------------------------------------------

\subsection{The discrete problem}\label{sec:discrete}
\label{sub:3.3}
We are now ready to state the proposed discrete problem. Referring to~\eqref{eq:V_h}, \eqref{eq:Q_h},  ~\eqref{eq:a_h},  ~\eqref{eq:ctilde_h} and ~\eqref{bhform}, we consider the \textbf{virtual element problem}:
\begin{equation}
\label{eq:ns virtual}
\left\{
\begin{aligned}
& \text{find $(\uu_h, p_h) \in \VV_h \times Q_h$, such that} \\
& \nu \, a_h(\uu_h, \vv_h) + \widetilde{c}_h(\uu_h; \,  \uu_h, \vv_h) + b(\vv_h, p_h) = (\ff_h, \vv_h) \qquad & \text{for all $\vv_h \in \VV_h$,} \\
&  b(\uu_h, q_h) = 0 \qquad & \text{for all $q_h \in Q_h$.}
\end{aligned}
\right.
\end{equation}

We point out that the symmetry of $a_h(\cdot, \cdot)$ together with \eqref{eq:stabk}  easily implies that $a_h(\cdot, \cdot)$  is  continuous and coercive with respect to the  $\VV$-norm. 
Moreover, as a direct consequence of Proposition 4.3 in \cite{Stokes:divfree}, we have the following stability result.
\begin{proposition}
\label{thm2} Given the discrete spaces
$\mathbf{V}_h$ and $Q_h$ defined in~\eqref{eq:V_h} and~\eqref{eq:Q_h}, there exists a positive $\widehat{\beta}$, independent of $h$, such that:
\begin{equation}
\label{eq:inf-sup discreta}
\sup_{\mathbf{v}_h \in \mathbf{V}_h \, \mathbf{v}_h \neq \mathbf{0}} \frac{b(\mathbf{v}_h, q_h)}{ \|\mathbf{v}_h\|_{\mathbf{V}}} \geq \widehat{\beta} \|q_h\|_Q \qquad \text{for all $q_h \in Q_h$.}
\end{equation}
\end{proposition}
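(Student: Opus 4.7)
The plan is to derive the discrete inf-sup from the continuous one by a Fortin-type argument, relying on the construction already carried out in~\cite{Stokes:divfree}.

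First I would observe that the pressure space $Q_h$ defined in~\eqref{eq:Q_h} coincides with the one chosen for the Stokes discretization in~\cite{Stokes:divfree}, and that the degrees of freedom governing $b(\cdot, \cdot)$ --- namely the boundary DoFs $\mathbf{D_V1}$, $\mathbf{D_V2}$ and the internal divergence moments $\mathbf{D_V4}$ --- appear unchanged in the enhanced space $\mathbf{V}_h^E$. Consequently, the divergence of any $\mathbf{v}_h \in \mathbf{V}_h^E$ is entirely determined by these DoFs, and its $L^2$-pairing against any $q_h \in Q_h$ is computed exactly as in the non-enhanced setting.

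Next I would recall the Fortin operator $\Pi_F \colon \mathbf{V} \to \mathbf{V}_h$ constructed in Proposition 4.3 of~\cite{Stokes:divfree}, which satisfies $b(\Pi_F \mathbf{v} - \mathbf{v}, q_h) = 0$ for every $q_h \in Q_h$, together with the uniform bound $\|\Pi_F \mathbf{v}\|_{\mathbf{V}} \leq C \|\mathbf{v}\|_{\mathbf{V}}$ with $C$ independent of $h$. That operator is built DoF by DoF: the boundary values are prescribed through a standard $H^1$-stable edge interpolation, the internal $\mathbf{D_V3}$ moments via an $L^2$-type projection, and the $\mathbf{D_V4}$ moments so as to reproduce the action of $\mathrm{div}\,\mathbf{v}$ against every $q_{k-1} \in \Pk_{k-1}(E)$, which is compatible with~\eqref{eq:divfree}. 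The enhancement in~\eqref{eq:V_h^E} is an internal orthogonality condition baked into the definition of the space rather than an extra DoF, so prescribing these DoF values automatically places $\Pi_F \mathbf{v}$ in $\mathbf{V}_h^E$.

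Finally, given any $q_h \in Q_h \subseteq Q$, the continuous inf-sup~\eqref{eq:inf-sup} produces $\mathbf{v} \in \mathbf{V}$, $\mathbf{v} \neq \mathbf{0}$, with $b(\mathbf{v}, q_h) \geq \beta \|q_h\|_Q \|\mathbf{v}\|_{\mathbf{V}}$. Combining the two properties of $\Pi_F$ I would then conclude
\[
\frac{b(\Pi_F \mathbf{v}, q_h)}{\|\Pi_F \mathbf{v}\|_{\mathbf{V}}} = \frac{b(\mathbf{v}, q_h)}{\|\Pi_F \mathbf{v}\|_{\mathbf{V}}} \geq \frac{\beta}{C}\, \|q_h\|_Q,
\]
so that the discrete inf-sup holds with $\widehat{\beta} = \beta/C$. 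The only delicate point I foresee is verifying that the Fortin construction from~\cite{Stokes:divfree} transfers verbatim to the enhanced space, but as noted above this is automatic since the two spaces share the same set of DoFs and the same admissible range of divergences, so no new constants depending on $h$ enter the bound.
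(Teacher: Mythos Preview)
Your proposal is correct and matches the paper's approach: the paper gives no proof at all, simply stating that the result is ``a direct consequence of Proposition~4.3 in~\cite{Stokes:divfree}''. You have supplied exactly the Fortin argument that underlies that citation, together with the observation (not spelled out in the present paper) that the enhanced space $\mathbf{V}_h^E$ shares the same DoFs and the same divergence range as the original space $\mathbf{W}_h^E$, so the construction and the $b$-preservation carry over unchanged; the only point you flag as delicate---that the $H^1$-stability bound for the Fortin operator must be re-verified in the enhanced space---is indeed the one place where a careful reader would want a word, but the paper itself does not provide it either.
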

In particular, the inf-sup condition of Proposition~\ref{thm2}, along with property~\eqref{eq:divfree}, implies that:
\begin{equation*}
%\label{eq:divfree2}
{\rm div}\, \mathbf{V}_h = Q_h .
\end{equation*}
The well-posedness of virtual problem \eqref{eq:ns virtual} is a consequence of the coercivity property of $a_h(\cdot,\cdot)$, the skew-symmetry of $\widetilde{c}_h(\cdot; \cdot,\cdot)$ and the inf-sup condition \eqref{eq:inf-sup discreta}. We have
\begin{theorem}
Assuming that
\begin{equation}
\label{eq:ns virtual condition}
\gamma_h := \frac{\widehat{C}_h \, \|\ff_h\|_{-1}}{\alpha_*^2 \, \nu^2} \le r < 1
\end{equation}
Problem \eqref{eq:ns virtual} has a unique solution $(\uu_h, p_h) \in \VV_h \times Q_h$ such that
\begin{equation}
\label{eq:solution virtual estimates}
\| \uu_h\|_{\VV} \leq \frac{\| \ff_h\|_{H^{-1}}}{\alpha_* \, \nu}.
\end{equation}
\end{theorem}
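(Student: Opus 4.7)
The plan is to reformulate the discrete mixed problem \eqref{eq:ns virtual} on the divergence-free kernel and apply the Banach fixed-point theorem, following the classical Galerkin strategy for Navier-Stokes (cf.~\cite{giraultbook}). First, introduce the discrete kernel $\ZZ_h := \{\vv_h \in \VV_h : b(\vv_h, q_h) = 0 \text{ for all } q_h \in Q_h\}$; by \eqref{eq:divfree}, elements of $\ZZ_h$ are pointwise divergence-free. Problem \eqref{eq:ns virtual} is then equivalent to finding $\uu_h \in \ZZ_h$ such that
\[
\nu\, a_h(\uu_h, \vv_h) + \widetilde{c}_h(\uu_h; \uu_h, \vv_h) = (\ff_h, \vv_h) \qquad \text{for all $\vv_h \in \ZZ_h$,}
\]
with the pressure $p_h \in Q_h$ recovered uniquely from the discrete inf-sup condition of Proposition~\ref{thm2} once $\uu_h$ is known.

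Next, for a fixed $\ww_h \in \ZZ_h$ I would define a linearization map $\Phi(\ww_h) = \uu_h$, where $\uu_h \in \ZZ_h$ is the unique solution of
\[
\nu\, a_h(\uu_h, \vv_h) + \widetilde{c}_h(\ww_h; \uu_h, \vv_h) = (\ff_h, \vv_h) \qquad \text{for all $\vv_h \in \ZZ_h$.}
\]
Well-posedness of this linear problem rests on two ingredients: (i) the stability bound \eqref{eq:stabk} combined with the unit coercivity of $a(\cdot,\cdot)$ yields $a_h(\vv,\vv) \ge \alpha_* \|\vv\|_{\VV}^2$ on $\VV_h$; (ii) by construction in \eqref{eq:ctilde_h^E}, $\widetilde{c}_h(\ww_h;\cdot,\cdot)$ is skew-symmetric in the last two arguments, so $\widetilde{c}_h(\ww_h;\vv_h,\vv_h)=0$. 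Hence the full bilinear form is $\ZZ_h$-coercive with constant $\alpha_*\nu$. Testing the defining equation with $\vv_h=\uu_h$ and using the skew-symmetry yields $\alpha_*\nu\|\uu_h\|_{\VV}^2 \le \|\ff_h\|_{-1}\|\uu_h\|_{\VV}$, so $\Phi$ maps the closed ball $B_R := \{\vv_h \in \ZZ_h : \|\vv_h\|_{\VV} \le \|\ff_h\|_{-1}/(\alpha_*\nu)\}$ into itself; this same bound will furnish \eqref{eq:solution virtual estimates} for any fixed point.

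Third, I would prove contraction on $B_R$. Given $\ww_h^1,\ww_h^2 \in B_R$ with $\uu_h^i=\Phi(\ww_h^i)$, subtracting the two equations and testing with $\vv_h = \uu_h^1-\uu_h^2$ eliminates $\widetilde{c}_h(\ww_h^1;\uu_h^1-\uu_h^2,\uu_h^1-\uu_h^2)=0$, leaving
\[
\alpha_*\nu\|\uu_h^1-\uu_h^2\|_{\VV}^2 \le |\widetilde{c}_h(\ww_h^1-\ww_h^2;\uu_h^2,\uu_h^1-\uu_h^2)|.
\]
The continuity of $\widetilde{c}_h$ (Proposition~\ref{prp:continuity-ch}) together with $\|\uu_h^2\|_{\VV}\le \|\ff_h\|_{-1}/(\alpha_*\nu)$ gives $\|\uu_h^1-\uu_h^2\|_{\VV} \le \gamma_h\|\ww_h^1-\ww_h^2\|_{\VV} \le r\|\ww_h^1-\ww_h^2\|_{\VV}$, where $\gamma_h$ is precisely the quantity in \eqref{eq:ns virtual condition}. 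Banach's fixed-point theorem on the complete finite-dimensional set $B_R$ yields a unique fixed point; uniqueness in the whole of $\ZZ_h$ follows because any solution automatically lies in $B_R$ by the same a priori estimate.

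The main delicate step is verifying the $\VV$-coercivity of $a_h$ with constant $\alpha_*$: one must decompose $\vv = \PN \vv + (\vv - \PN \vv)$, exploit the $H^1$-orthogonality in the definition of $\PN$ to split $a^E(\vv,\vv) = a^E(\PN\vv,\PN\vv) + a^E((I-\PN)\vv,(I-\PN)\vv)$, and apply \eqref{eq:S^E} on the kernel of $\PN$ (tacitly using $\alpha_* \le 1$, the standard normalization). Beyond this, the argument is a mechanical application of Banach's theorem in which the skew-symmetry of $\widetilde{c}_h$ is the structural feature making both the energy and the contraction estimates independent of the convective term along the diagonal.
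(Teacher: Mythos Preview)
Your proposal is correct and is precisely the classical fixed-point argument the paper has in mind: the paper does not spell out a proof but simply states that well-posedness follows from the coercivity of $a_h(\cdot,\cdot)$, the skew-symmetry of $\widetilde{c}_h(\cdot;\cdot,\cdot)$, and the discrete inf-sup condition, implicitly deferring to the standard Galerkin argument in \cite{giraultbook}. Your Banach contraction on the kernel $\ZZ_h$, with the energy bound and the contraction constant $\gamma_h$ emerging exactly as in \eqref{eq:ns virtual condition}, is the detailed version of that argument; the only remark is that the coercivity bound $a_h(\vv,\vv)\ge\alpha_*\|\vv\|_{\VV}^2$ is already granted by \eqref{eq:stabk}, so your final paragraph on decomposing via $\PN$ is unnecessary here.
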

Moreover, as observed in \cite{Stokes:divfree}, introducing the discrete kernel
\begin{equation*}
%\label{eq:Z_h}
\mathbf{Z}_h := \{ \mathbf{v}_h \in \mathbf{V}_h \quad \text{s.t.} \quad b(\mathbf{v}_h, q_h) = 0 \quad \text{for all $q_h \in Q_h$}\},
\end{equation*}
recalling \eqref{eq:divfree} it follows
\begin{equation}
\label{kernincl}
\mathbf{Z}_h \subseteq \mathbf{Z} .
\end{equation}
Problem \eqref{eq:ns virtual} can be also formulated in the equivalent kernel form
\begin{equation}
\label{eq:nsvirtual ker}
\left\{
\begin{aligned}
& \text{find $\uu_h \in \ZZ_h$, such that} \\
& \nu \, a_h(\uu_h, \vv_h) + \widetilde{c}_h(\uu_h; \, \uu_h, \vv_h) = (\ff_h, \vv_h) \qquad & \text{for all $\vv_h \in \ZZ_h$.} 
\end{aligned}
\right.
\end{equation}

\begin{remark}\label{rem:non-skew}
An alternative choice for the discretization \eqref{eq:ns virtual} is to substitute the skew-simmetric form $\widetilde{c}_h(\cdot;\cdot,\cdot)$ with 
${c}_h(\cdot;\cdot,\cdot)$. With that choice, a theoretical analysis can be developed using the guidelines in \cite{Maday_Quarteroni} in connection 
with the same tools and ideas of Section \ref{sec:4}. We here prefer to consider the choice \eqref{eq:ns virtual}, that allows for a more direct stability argument. Nevertheless, in the numerical tests of Section \ref{sec:5} we will investigate both possibilities.
\end{remark}

\begin{remark}\label{rem:Sto-Dar} 
An additional interesting consequence of property \eqref{kernincl} is that, following \cite{Stokes:divfree,preprintdarcy}, the proposed virtual elements can accommodate both the Stokes (or Navier-Stokes) and the Darcy problems simultaneously. Indeed, due to property \eqref{kernincl}, the proposed velocity-pressure couple turns out to be stable not only for the Stokes problem, but also for the Darcy problem. This yields an interesting advantage in complex flow problems where both equations are present: the same spaces can be used in the whole computational domain. As a consequence, the implementation of the method and the enforcement of the interface conditions are greatly simplified (see also Section \ref{test6}).

%as it makes much easier building the scheme (the same spaces are used in all areas of the domain) 
%and enforcing the interface conditions (see also section \ref{test6}).
\end{remark}

%----------------------------------------------------------------------------------------------------------------------------
\section{Theoretical analysis}
\label{sec:4}
%----------------------------------------------------------------------------------------------------------------------------
%----------------------------------------------------------------------------------------------------------------------------

\subsection{Interpolation estimates}
\label{sub:4.1}
In this section we prove that the following interpolation estimate holds for the enhanced space $\VV_h$. Since the proof is quite involved, we divide it in three steps.
\begin{theorem}
\label{thm:interpolante}
Let $\vv \in H^{s+1}(\Omega) \cap \VV$, for $0<s \le k$. Then there exists $\vv_I \in \VV_h$ such that
\[
\| \vv - \vv_I \|_0 + h \, \| \vv - \vv_I \|_{\VV} \leq C \, h^{s+1} \, | \vv |_{s+1},
\]
where the constant $C$ depends only on the degree $k$ and the shape regularity constants $\rho,c$ (see assumptions $\mathbf{(A1)}$ and $\mathbf{(A2)}$ of Section \ref{sub:3.1}).
\end{theorem}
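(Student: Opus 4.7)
The plan is to define $\vv_I$ element-by-element by matching the degrees of freedom $\mathbf{D_V}$ of Proposition~\ref{prp:dofs}, and then bound the error by the classical VEM triangle-inequality argument combined with a local polynomial approximant and a scaled stability estimate on the discrete space. I expect the proof to split naturally into three steps, the last being the main technical obstacle.

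\emph{Step 1 (interpolant).} On each $E\in\Omega_h$ let $\vv_I$ be locally the unique element of $\VV_h^E$ with $\mathbf{D_V}(\vv_I)=\mathbf{D_V}(\vv)$. Since $s>0$, the two-dimensional Sobolev embedding $H^{s+1}(E)\hookrightarrow C^0(\overline{E})$ makes all point values in $\mathbf{D_V1}$--$\mathbf{D_V2}$ meaningful, and the integral moments in $\mathbf{D_V3}$--$\mathbf{D_V4}$ are well defined. Edge DoFs determine each trace uniquely, so matching them across neighbouring elements yields a globally $H^1$-conforming $\vv_I\in\VV_h$.

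\emph{Step 2 (reduction to a stability estimate).} Let $\vv_\pi\in[\Pk_k(E)]^2$ be a standard Bramble-Hilbert / averaged Taylor polynomial approximation of $\vv$, satisfying
\[
\|\vv-\vv_\pi\|_{0,E}+h_E\,|\vv-\vv_\pi|_{1,E}\le C\, h_E^{s+1}\,|\vv|_{s+1,E}.
\]
A direct verification shows $[\Pk_k(E)]^2\subset \VV_h^E$: for a polynomial $\vv_\pi$ one has $\PN\vv_\pi=\vv_\pi$, so the enhancement constraint in~\eqref{eq:V_h^E} is trivially satisfied, while the Stokes-like conditions defining $\mathbf{U}_h^E$ are recovered from the Helmholtz decomposition of $\dl\vv_\pi\in[\Pk_{k-2}(E)]^2$ combined with the inclusion $\mathcal{G}_{k-2}^{\oplus}(E)\subset\mathcal{G}_{k}^{\oplus}(E)$. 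Hence $\vv_\pi-\vv_I\in\VV_h^E$ locally, and by construction of $\vv_I$ it shares its DoFs with $\vv_\pi-\vv$. Applying the triangle inequality reduces the theorem to bounding
\[
\|\vv_\pi-\vv_I\|_{0,E}+h_E\,|\vv_\pi-\vv_I|_{1,E}\le C\, h_E^{s+1}\,|\vv|_{s+1,\omega_E},
\]
which in turn will follow from a scaled stability estimate of the form
\[
h_E^{-1}\|\ww_h\|_{0,E}+|\ww_h|_{1,E}\le C\,\bigl\|\mathbf{D_V}(\ww_h)\bigr\|_{\mathrm{scaled}}\qquad\forall\,\ww_h\in\VV_h^E,
\]
combined with elementary bounds on the individual DoFs of $\vv_\pi-\vv$ through trace, inverse and Bramble-Hilbert inequalities.

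\emph{Step 3 (stability for the enhanced space, main obstacle).} The scaled stability bound is the novel VEM-specific ingredient, and is the step the introduction advertises as requiring new arguments. For the non-enhanced space $\mathbf{U}_h^E$ the standard strategy is to invert the Stokes-like boundary value system implicit in the definition, expressing $\ww_h$ in terms of its boundary trace (controlled by $\mathbf{D_V1}$--$\mathbf{D_V2}$) and its divergence and source data (singled out by $\mathbf{D_V3}$--$\mathbf{D_V4}$). The enhancement condition $(\ww_h-\PN\ww_h,\mathbf{g}_k^\perp)_{0,E}=0$ for $\mathbf{g}_k^\perp\in\mathcal{G}_{k}^{\oplus}(E)/\mathcal{G}_{k-2}^{\oplus}(E)$ couples interior and boundary DoFs nonlocally through $\PN$, so it must be solved consistently with the Stokes-like system. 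I would exploit the computability of $\PN$ from $\mathbf{D_V}$ (Proposition~\ref{prp:projections}) to rewrite the constraint as an explicit relation among DoFs, and then recover $\ww_h$ by solving a well-posed finite-dimensional linear system on $\VV_h^E$. Uniformity in $h_E$ is then obtained by scaling to a reference element and invoking the shape-regularity hypotheses $\mathbf{(A1)}$--$\mathbf{(A2)}$. Squaring the local bound and summing over $E\in\Omega_h$, using $h_E\le h$, finally yields the claimed global estimate.
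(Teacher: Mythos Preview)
Your Steps~1--2 are sound: the DoF interpolant is well defined for $s>0$, the inclusion $[\Pk_k(E)]^2\subset\VV_h^E$ holds for the reasons you give, and the reduction to a scaled stability bound on $\VV_h^E$ is a legitimate framework. The gap is entirely in Step~3. Your proposed route to uniformity---``scaling to a reference element''---is not available for polygonal meshes: elements have different numbers of vertices and are not affinely equivalent to any fixed reference shape. Mere well-posedness of the finite-dimensional linear system on $\VV_h^E$ produces \emph{some} stability constant, but not one that depends only on $k,\rho,c$; obtaining a bound uniform over all admissible polygons is precisely the hard work, and your sketch gives no mechanism for it. Rewriting the enhancement constraint as ``an explicit relation among DoFs'' does not help, since the coefficients of that relation themselves depend on the element geometry in a way you have not controlled.

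The paper avoids a direct stability estimate for $\VV_h^E$ altogether. It first invokes the already-proved interpolant $\ww_I$ in the \emph{non-enhanced} space $\mathbf{W}_h^E$ (Proposition~4.2 of \cite{Stokes:divfree}), then defines $\vv_I\in\VV_h^E$ by matching DoFs with $\ww_I$ (not with $\vv$), and analyses $\te=\vv_I-\ww_I$. Because $\te$ vanishes on $\partial E$ and is divergence-free, it is characterised as the solution of a mixed Stokes-like problem on $E$ whose only nonzero datum lives in the enhancement space $\mathcal{G}_k^\oplus(E)/\mathcal{G}_{k-2}^\oplus(E)$ and is controlled by $\|\PN\ww_I-\ww_I\|_{0,E}$. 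The crucial uniform inf-sup for this mixed problem is then proved \emph{constructively on the physical element}: for the $d^E$ block one takes the test function $\fg_2=h_E^4\,\cc\bigl(b\,\rr(\mathbf h)\bigr)$, with $b$ the cubic bubble on an equilateral triangle inscribed in the ball $B_E$ guaranteed by $\mathbf{(A1)}$. Polynomial inverse estimates on this fixed triangle, combined with the norm equivalence $\|p\|_{0,E}\le C\|p\|_{0,T_E}$, yield constants depending only on $k,\rho,c$. This explicit construction is what replaces the reference-element argument you are missing; if you want your Step~3 to go through, you will need an analogous device.
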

\begin{proof} 
\textit{Step 1.} 
Let $\ww_I$ the approximant function of $\vv$ in the space $\mathbf{W}_h$ (cf. \eqref{eq:W_h} and Proposition 4.2 in  \cite{Stokes:divfree}) then it holds that
\begin{equation}
\label{eq:interpolata old}
\| \vv - \ww_I \|_0 + h \, \| \vv - \ww_I \|_{\VV} \leq C \, h^{s+1} \, | \vv |_{s+1}.
\end{equation}
Now let $\vv_I \in \VV_h$ be the interpolant of $\ww_I$ in the sense of the DoFs $\mathbf{D_V}$, so that
\begin{equation}\label{Dv-eq-I}
\mathbf{D_V}(\vv_I)= \mathbf{D_V}(\ww_I).
\end{equation}
Let us define $\te := \vv_I - \ww_I$, then for every element $E \in \Omega_h$ the following facts hold.
\begin{itemize}
\item Since $\vv_I$ and $\ww_I$ are polynomials of degree $k$ on $\partial E$, by definition of  $\mathbf{D_V1}$ and $\mathbf{D_V2}$, we have
\begin{equation}
\label{eq:inter1}
\te = \mathbf{0} \qquad \text{on $\partial E$.}
\end{equation}
\item Since $\dd \, \vv_I$ and $\dd \, \ww_I$ are polynomials of degree $k-1$ in $E$, by definition of  $\mathbf{D_V4}$ and homogeneous boundary data \eqref{eq:inter1}, we get
\begin{equation}
\label{eq:inter2}
\dd \, \te = 0 \qquad \text{in $E$.}
\end{equation}
\item Let $d^E(\cdot, \, \cdot) \colon H^1_0(E) \times \mathcal{G}_k^{\oplus}(E) \to \R$ given by
\[
d^E(\vv, \, \ggq) =\int_E \vv \cdot \ggq \,{\rm d}E \qquad \text{for all $\vv \in H^1_0(E)$, and $\ggq \in \GGq$} .
\]
Then by definition of  $\mathbf{D_V3}$, we infer
\begin{equation}\label{added:star1}
d^E(\te, \, \ggp) = 0\qquad \text{for all $\ggp \in \GGp$}.
\end{equation}
Now we recall that, for any $\vv_h \in \VV_h$, the quantity $\PN \vv_h$ depends only on the values of $\mathbf{D_V}(\vv_h)$, see Proposition \ref{prp:projections}. Therefore, using \eqref{Dv-eq-I}, we have that $\PN\vv_I = \PN\ww_I$. As a consequence, by definition of $\VV_h^E$ it holds
\begin{equation}\label{added:star2}
d^E(\te, \, {\mathbf{g}}^{\perp}) =  \int_E \left( \PN \vv_I - \ww_I \right) \cdot {\mathbf{g}}^{\perp} \, {\rm d}E =  \int_E \left( \PN \ww_I - \ww_I \right) \cdot {\mathbf{g}}^{\perp} \, {\rm d}E
\end{equation}
for all $\mathbf{g}^{\perp} \in \GGq \setminus \GGp$. Thus, by \eqref{added:star1} and \eqref{added:star2},
\begin{equation}
\label{eq:inter3}
d^E(\te, \, \ggq) = (\xx, \, \ggq) \qquad \text{for all $\ggq \in \GGq$} 
\end{equation}
where 

\begin{equation}\label{eq:chi}
\mbox{
$\xx$ is the $L^2$-projection of $\left( \PN \ww_I - \ww_I \right)$ onto $\GGq \setminus \GGp$.}
\end{equation}

\item By definition of $\WW_h^E$ and $\VV_h^E$ there exist $\widehat{s} \in L_0^2(E)$ and $\widehat{\mathbf{g}} \in \GGq$ such that
\begin{equation}
\label{eq:inter4}
a^E(\te, \vv) +b^E(\vv, \widehat{s}) + d^E(\vv, \widehat{\mathbf{g}}) = 0 \qquad \text{for all $\vv \in H^1_0(E)$.} 
\end{equation}
\end{itemize}
Collecting \eqref{eq:inter1}, \eqref{eq:inter2}, \eqref{eq:inter3}, \eqref{eq:inter4} it follows that $(\te, \, \widehat{s}, \, \widehat{\mathbf{g}})$ solves the problem
\begin{equation}
\label{eq:pbinter}
\left\{
\begin{aligned}
& \text{Find $(\te, \, \widehat{s}, \, \widehat{\mathbf{g}}) \in [H^1_0(E)]^2 \times L^2_0(E) \times \GGq$, such that} \\
& a^E(\te, \fgl) +  b^E(\fgl, \widehat{s}) + d^E(\fgl, \widehat{\mathbf{g}}) = 0 \qquad & \text{for all $\fgl \in [H^1_0(E)]^2$,} \\
& b^E(\te, q) = 0 \qquad & \text{for all $q \in L^2_0(E)$,} \\
& d^E(\te, \mathbf{h}) = (\xx, \mathbf{h}) \qquad  & \text{for all $\mathbf{h} \in \GGq$.}
\end{aligned}
\right.
\end{equation}
\textit{Step 2.}
We now analyse the well-posedness of Problem \eqref{eq:pbinter}. We consider $[H^1_0(E)]^2$ and $L^2(E)$ endowed with the $H^1$ and the $L^2$-norm, respectively, and $\GGq$ endowed with the scaled norm
\[
\|\mathbf{h}\|_{\GGq} := h_E \, \|\mathbf{h}\|_{0,E} \qquad \text{for all $\mathbf{h} \in \GGq$.}
\]
Then for all $\fgl \in [H^1_0(E)]^2$ and $\mathbf{h} \in \GGq$
\begin{equation}
\label{eq:dcon}
d^E(\fgl, \mathbf{h}) = \int_E \fgl \cdot \mathbf{h} \, {\rm d}E \leq \|{\fgl}\|_{0,E} \|\mathbf{h}\|_{0,E} \leq c_{{\rm cont}} \,  |\fgl|_{1,E} \, h^E \|\mathbf{h}\|_{0,E} ,
\end{equation}
where the last inequality follows by a scaled Poincar\'e inequality.
Therefore all the involved bilinear forms are continuous. By the theory of problems in mixed form \cite{BoffiBrezziFortin}, due to the coercivity of $a^E(\cdot,\cdot)$ the well-posedness of problem \eqref{eq:pbinter} will follow if we show an inf-sup condition for the form
$$
b^E(\cdot,\cdot) \! + \! d^E(\cdot,\cdot) \ \colon \
[H^1_0(E)]^2 \times \big(L^2_0(E) \times \GGq) \to {\mathbb R} .
$$
In other words, for all $(q, \mathbf{h}) \in L^2_0(E) \times \GGq$ we have to find $\fg \in H^1_0(E)$ such that
\begin{equation}
\label{eq:is3inter}
\left\{
\begin{aligned}
& |\fg|_{1,E} \leq  b_0\, (\|q\|_{0,E} + \|\mathbf{h}\|_{\GGq} ) \\
& b^E(\fg, q) +  d^E(\fg, \mathbf{h}) \geq c_0 \, ( \|q\|_{0,E} + \|\mathbf{h}\|_{\GGq})^2
\end{aligned}
\right.
\end{equation}
for suitable uniform positive constants  $b_0$, $c_0$.
It is well known (see \cite{BoffiBrezziFortin}) that for all $q \in L^2_0(E)$ there exists $\fg_1 \in [H^1_0(E)]^2$ such that
\begin{equation}
\label{eq:is1inter}
\left\{
\begin{aligned}
& |\fg_1|_{1,E} \leq b_1 \, \|q\|_{0,E} \\
& b^E(\fg_1, q) \geq c_1 \, \|q\|^2_{0,E}.
\end{aligned}
\right.
\end{equation}
Now let $T_E \subset E$ be an equilateral triangle inscribed in the ball $B_E$ (cf. assumption $\mathbf{(A1)}$). Then for all polynomial $p \in \Pk_k(E)$, it holds $\|p\|_{0,E} \leq C \|p\|_{0, T_E}$ for a suitable uniform constant $C$. Let $\mathbf{h} \in \GGq$ and we define
\[
q := \rr (\mathbf{h}) \qquad \text{and} \qquad \fg_2 := h_E^4 \, \cc(b q)
\]
where $b \in \Pk_3(T_E)$ denotes the standard cubic bubble in $T_E$ with unitary maximum value. Therefore, we get
\begin{equation}
\label{eq:is21inter}
\begin{split}
d^E(\fg_2, \mathbf{h}) & = h_E^4 \, \int_E \cc(bq) \cdot \mathbf{h} \, {\rm d}E
= h_E^4 \, \int_E b q \,   \rr(\mathbf{h}) \, {\rm d}E 
= h_E^4 \, \int_E b \,   \rr(\mathbf{h})^2 \, {\rm d}E \\
& \geq  h_E^4 \,\|\rr(\mathbf{h})\|_{0,E}^2 \geq C h_E^4 \, \|\rr(\mathbf{h})\|_{0,T_E}^2.
\end{split}
\end{equation}
Since $\rr \colon \mathcal{G}_k^{\oplus}(T_E)\to \Pk_{k-1}(T_E)$ is an isomorphism (see \cite{supermisti}), a scaling argument for polynomials on the triangle $T_E$ yields $\|\rr(\mathbf{h})\|_{0,T_E} \geq h_E^{-1} \|\mathbf{h}\|_{0,T_E}$. Thus using  \eqref{eq:is21inter} we find
\begin{equation}
\label{eq:is22inter}
d^E(\fg_2, \mathbf{h}) \geq C \,h_E^4 \, h_E^{-2} \, \|\mathbf{h}\|_{0,T_E}^2 \geq C \, h_E^{2} \, \|\mathbf{h}\|_{0,E}^2 = C \, \|\mathbf{h}\|_{\GGq}^2.
\end{equation}
Moreover using an inverse estimate for the polynomials $bq$ and $\mathbf{h}$
\begin{equation}
\label{eq:is23inter}
\begin{split}
|\fg_2|_{1,E} &= h_E^4 \,|\cc(b q)|_{1,E} \leq C h_E^4 \, \, h_E^{-2} \|b q\|_{0,E} \leq C \, h_E^{2} \|q\|_{0,E} \\
& = C  \, h_E^{2} \|\rr(\mathbf{h})\|_{0,E} \leq C \, h_E \,  \|\mathbf{h}\|_{0,E} = C  \|\mathbf{h}\|_{\GGq(E)}.
\end{split}
\end{equation}
Therefore by \eqref{eq:is22inter} and \eqref{eq:is23inter} for all $\mathbf{h} \in \GGq$ we find $\fg_2 \in H^1_0(E)$ such that
\begin{equation}
\label{eq:is2inter}
\left\{
\begin{aligned}
& |\fg_2|_{1,E} \leq b_2 \, \|\mathbf{h}\|_{\GGq} \\
& d^E(\fg_2, \mathbf{h}) \geq c_2 \, \|\mathbf{h}\|^2_{\GGq}.
\end{aligned}
\right.
\end{equation}
Recalling \eqref{eq:is3inter}, let us set $\fg := \fg_1 + \xi \,\fg_2$ (cf. \eqref{eq:is1inter} and \eqref{eq:is2inter}) where $\xi$ is a  positive constant. 
Then, it is clear that
\begin{equation}
\label{eq:is31inter}
|\fg|_{1,E} \leq |\fg_1|_{1,E} + |\fg_2|_{1,E} \leq \max \{b_1, \,  b_2 \} \, (1 + \xi)  (\|q\|_{0,E} + \|\mathbf{h}\|_{\GGq} ).
\end{equation}
Moreover, by \eqref{eq:dcon} and since $\dd \, \cc = 0$, we have
\begin{equation}
\label{eq:is32inter}
\begin{split}
b^E(\fg, q) +  d^E(\fg, \mathbf{h}) &=  b^E(\fg_1, q) +  d^E(\fg_1, \mathbf{h}) + \xi \, b^E(\fg_2, q) +  \xi \,d^E(\fg_2, \mathbf{h}) \\
& = b^E(\fg_1, q) +   d^E(\fg_1, \mathbf{h})  +  \xi \, d^E(\fg_2, \mathbf{h}) \\
& \geq c_1 \, \|q\|^2_{0,E} +  c_2 \, \xi \, \|\mathbf{h}\|^2_{\GGq} +  d^E(\fg_1, \mathbf{h}) \\
&  \geq c_1 \, \|q\|^2_{0,E} +  c_2 \, \xi \, \|\mathbf{h}\|^2_{\GGq} - c_{\rm cont}  \, |\fg_1|_{1,E} \|\mathbf{h}\|_{\GGq}\\
&  \geq c_1 \, \|q\|^2_{0,E} +  c_2 \, \xi \, \|\mathbf{h}\|^2_{\GGq} - c_{\rm cont} b_1 \, \|q\|_{0,E} \|\mathbf{h}\|_{\GGq}\\
& \geq \left( c_1 - \frac{\epsilon}{2} c_{\rm cont} b_1 \right) \, \|q\|^2_{0,E} + 
\left( \xi \, c_2 - \frac{1}{2 \epsilon} c_{\rm cont} b_1  \right) \, \|\mathbf{h}\|^2_{\GGq}
\end{split}
\end{equation} 
for any positive real number $\epsilon$. Finally, setting
\[
\epsilon := \frac{c_1}{ c_{\rm cont} b_1} \qquad \text{and} \qquad \xi := \frac{ c_{\rm cont}^2 \, b_1^2}{c_1 c_2}
\]
by \eqref{eq:is31inter} and \eqref{eq:is32inter} we get \eqref{eq:is3inter}. 

\noindent
\textit{Step 3.}
Since problem \eqref{eq:pbinter} is well-posed, the following stability estimate holds
\[
|\te|_{1, E} + \|\widehat{s}\|_{0,E} +  \|\widehat{\mathbf{g}}\|_{\GGq} \leq  \|\xx\|_{\left(\GGq \right)^{*}},
\]
where
\[
\|\xx\|_{\left(\GGq \right)^{*}} := \sup_{\mathbf{h} \in \GGq, \mathbf{h} \neq \mathbf{0}} \frac{(\xx, \mathbf{h})}{\|\mathbf{h}\|_{\GGq}} \leq h_E^{-1} \, \|\xx\|_{0,E}.
\]
Then, by the definition of $\xx$ (see \eqref{eq:chi}) and by the continuity of the $L^2$-projection, we get
\[
\begin{split}
|\te|_{1, E} \leq h_E^{-1} \, \|\xx\|_{0,E} \leq h_E^{-1} \, \left \| \PN \ww_I - \ww_I \right\|_{0,E} \leq C \left | \PN \ww_I - \ww_I \right|_{1,E}
\end{split}
\]
where the last inequality is justified since, by definition \eqref{eq:Pn_k^E}, the function $\left ( \PN \ww_I - \ww_I \right)$ has zero mean value. Noting that $\PN$ is a projection with respect the $H^1$ semi-norm and using a triangular inequality, from \eqref{eq:interpolata old} we finally get
\begin{equation}
\label{eq:inter5}
\begin{split}
|\te|_{1,E} & \leq \, \left(  \left | \PN (\ww_I - \vv) |_{1,E} \right| + \left| \ww_I  - \PN \vv \right|_{1,E} \right) \\
& \leq \left(  2 \left | (\ww_I - \vv) |_{1,E} \right| + \left| \vv  - \PN \vv \right|_{1,E} \right) \\
& \leq C \,  h_E^s \, |\vv|_{s+1, E}.
\end{split}
\end{equation}
The thesis now follows from \eqref{eq:inter5} and again \eqref{eq:interpolata old}, by adding all the local contributions.
For what concerns the $L^2$ estimate,  for each polygon $E \in \Omega_h$, we have that $\te = \mathbf{0}$ on $\partial E$ (see \eqref{eq:inter1}). Hence, from \eqref{eq:inter5} it
holds
\[
\|\te\|_{0,E} \leq C \, h_E \, |\te|_{1,E} \leq C \,  h_E^{s+1} \, |\vv|_{s+1, E},
\]
from which we easily infer the $L^2$ estimate.
\end{proof} 

%----------------------------------------------------------------------------------------------------------------------------
%----------------------------------------------------------------------------------------------------------------------------
\subsection{Convergence analysis}
\label{sub:4.2}
First of all, let us recall a classical approximation result for $\Pk_k$ polynomials on star-shaped domains, see for instance \cite{brennerscott}.

\begin{lemma}
\label{lm:scott}
Let $E \in \Omega_h$, and let two real numbers $s,p$ with $0 \le s \le k$ and $1 \le p \le \infty$.
Then for all $\mathbf{u} \in [H^{s+1}(E)]^2$, there exists a  polynomial function $\mathbf{u}_{\pi} \in [{\Pk}_k(E)]^2$,  such that
\begin{equation}
\label{eq:scott}
\|\mathbf{u} - \mathbf{u}_{\pi}\|_{L^p(E)} + h_E 
| \mathbf{u} - \mathbf{u}_{\pi} |_{W^{1,p}(E)} \leq C h_E^{s+1}| \mathbf{u}|_{W^{s+1,p}(E)},
\end{equation}
with $C$ depending only on $k$ and the shape regularity constant $\rho$ in assumption $\mathbf{(A1)}$.
\end{lemma}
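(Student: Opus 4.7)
The plan is to take $\mathbf{u}_\pi$ to be the averaged Taylor polynomial of $\mathbf{u}$ of degree $k$ built over the ball $B_E$ from assumption $\mathbf{(A1)}$, and then to apply the Bramble-Hilbert-type estimate of Dupont-Scott. This is the standard route followed in Chapter 4 of Brenner-Scott; the only reason to go through it is to verify that the resulting constant depends just on $k$ and on the shape-regularity parameter $\rho$, and not on any finer geometric features of $E$.

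Concretely, I would first define, componentwise on $\mathbf{u}$, the averaged Taylor polynomial
\[
Q^k u(x) := \int_{B_E} \phi(y)\, T_y^k u(x)\, dy,
\]
where $T_y^k u$ is the Taylor polynomial of degree $k$ of $u$ centred at $y$ and $\phi \in C_0^\infty(B_E)$ is a nonnegative cutoff with $\int \phi = 1$. Assumption $\mathbf{(A1)}$ guarantees that for every $x \in E$ and every $y \in B_E$ the segment joining $y$ to $x$ is contained in $E$, so the integral representation of the Taylor remainder is meaningful and $Q^k u$ is a well-defined polynomial of degree at most $k$ on $\R^2$.

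The core step is then the Dupont-Scott theorem, which yields, for every integer $m$ with $0 \leq m \leq s+1$ and every $1 \leq p \leq \infty$,
\[
|u - Q^k u|_{W^{m,p}(E)} \leq C(k,\gamma_E)\, h_E^{s+1-m}\, |u|_{W^{s+1,p}(E)},
\]
where $\gamma_E$ is the chunkiness parameter of $E$, namely $h_E$ divided by the radius of the largest ball with respect to which $E$ is star-shaped. Under $\mathbf{(A1)}$ we have $\gamma_E \leq 1/\rho$, so the constant depends only on $k$ and $\rho$. Setting $\mathbf{u}_\pi := \mathbf{Q}^k \mathbf{u}$ componentwise and combining the bounds for $m=0$ and $m=1$ delivers exactly \eqref{eq:scott}.

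The delicate ingredient is the Dupont-Scott estimate itself, whose proof rests on an integral representation of the Taylor remainder, a Riesz-potential bound that recovers Sobolev norms of the remainder from pointwise bounds on its $(k+1)$-st derivatives, and a scaling argument that exhibits the chunkiness-parameter dependence explicitly. Since that argument is completely worked out in Brenner-Scott, in the present paper the lemma should simply be invoked as a citation, and no further work is required here.
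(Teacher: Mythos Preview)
Your proposal is correct and matches the paper exactly: the paper does not prove this lemma at all but simply states it as a classical result, citing Brenner--Scott. Your sketch of the averaged Taylor polynomial / Dupont--Scott argument is the standard content behind that citation, and your closing remark that ``the lemma should simply be invoked as a citation'' is precisely what the authors do.
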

Now we prove two technical lemmata.
\begin{lemma}
\label{lemma3}
Let $\vv \in H^{s+1}(\Omega) \cap \VV$ with $0 \le s \le k$. Then for all $\ww \in \VV$ it holds
\[
\left|\widetilde{c}(\vv; \, \vv, \ww) - \widetilde{c}_h(\vv; \, \vv, \ww)\right| \leq C \, h^s \, \left( \|\vv\|_s + \|\vv\|_{\VV} + \|\vv\|_{s+1} \right)\|\vv\|_{s+1} \, \|\ww\|_{\VV}.
\]
\end{lemma}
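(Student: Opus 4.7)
The plan is to reduce to the non-skew-symmetric consistency error, decompose each elementwise contribution by successive projection substitution, and estimate the three resulting pieces with a combination of H\"older's inequality, $L^4$-Sobolev embedding, projection approximation/stability and polynomial orthogonality.

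First, by definition of $\widetilde c$ and $\widetilde c_h$,
\[
\widetilde c(\vv;\vv,\ww) - \widetilde c_h(\vv;\vv,\ww) = \tfrac{1}{2}\bigl(c(\vv;\vv,\ww)-c_h(\vv;\vv,\ww)\bigr) - \tfrac{1}{2}\bigl(c(\vv;\ww,\vv)-c_h(\vv;\ww,\vv)\bigr),
\]
so it is enough to bound each non-symmetric consistency error separately; the second is handled by the same arguments with the second and third slots exchanged. For the first, on every $E$ I would use the telescopic identity
\[
c^E(\vv;\vv,\ww)-c_h^E(\vv;\vv,\ww) = T_1^E+T_2^E+T_3^E,
\]
with $T_1^E := \int_E [(I-\PP0)\nabla\vv]\vv\cdot\ww$, $T_2^E := \int_E (\PP0\nabla\vv)[(I-\P0)\vv]\cdot\ww$, and $T_3^E := \int_E (\PP0\nabla\vv)(\P0\vv)\cdot[(I-\P0)\ww]$.

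For $T_1^E$ a three-factor H\"older ($L^2\cdot L^4\cdot L^4$) combined with $\|(I-\PP0)\nabla\vv\|_{0,E}\le Ch_E^s|\vv|_{s+1,E}$ and the 2D Sobolev embedding $H^1 \hookrightarrow L^4$ (after H\"older for sums) yields $|T_1|\le Ch^s\|\vv\|_{s+1}\|\vv\|_\VV\|\ww\|_\VV$. For $T_2^E$, $L^2$-stability of $\PP0$ together with the Ladyzhenskaya-type estimate $\|(I-\P0)\vv\|_{L^4(E)} \le C\|(I-\P0)\vv\|_{0,E}^{1/2}|(I-\P0)\vv|_{1,E}^{1/2} \le Ch_E^{s+1/2}|\vv|_{s+1,E}$ (valid since the projection error has zero mean on $E$) gives an $h^{s+1/2}$ contribution, easily absorbed into $Ch^s\|\vv\|_\VV\|\vv\|_{s+1}\|\ww\|_\VV$.

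The delicate term is $T_3^E$: direct H\"older with inverse inequalities only yields the suboptimal rate $h^{1/2}$. To recover the full $h^s$ I would exploit the orthogonality $(I-\P0)\ww \perp [\Pk_k(E)]^2$ and write, for every $\mathbf{q}_k \in [\Pk_k(E)]^2$,
\[
T_3^E = \int_E \bigl[(\PP0\nabla\vv)(\P0\vv)-\mathbf{q}_k\bigr]\cdot(I-\P0)\ww.
\]
Choosing $\mathbf{q}_k$ as the Scott--Dupont polynomial approximant of the continuous product $(\nabla\vv)\vv$ (Lemma \ref{lm:scott}), I would split the resulting $L^2$-distance via triangle inequality into (a) $\|(\PP0\nabla\vv)(\P0\vv)-(\nabla\vv)\vv\|_{0,E}$, bounded via H\"older ($L^2\cdot L^\infty$) using $L^\infty$-stability of the projections and the 2D embedding $H^{s+1}\hookrightarrow L^\infty$ (for $s>0$), and (b) $\|(\nabla\vv)\vv-\mathbf{q}_k\|_{0,E}\le Ch_E^s|(\nabla\vv)\vv|_{s,E}$, where a Sobolev product rule controls $|(\nabla\vv)\vv|_{H^s(E)}$ by $C\|\vv\|_{s,E}\|\vv\|_{s+1,E}$ plus absorbable lower-order contributions. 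Combined with $\|(I-\P0)\ww\|_{0,E}\le Ch_E|\ww|_{1,E}$ and Cauchy--Schwarz for sums, this produces $|T_3|\le Ch^s(\|\vv\|_s+\|\vv\|_{s+1})\|\vv\|_{s+1}\|\ww\|_\VV$.

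Summing the three bounds gives the claim for the first non-symmetric piece; the analogous argument for $c(\vv;\ww,\vv)-c_h(\vv;\ww,\vv)$ completes the proof, and the case $s=0$ reduces to the uniform continuity of $\widetilde c - \widetilde c_h$ already established in Proposition \ref{prp:continuity-ch}. The principal obstacle is the bound on $T_3^E$, which requires the subtle combination of polynomial orthogonality with a Scott--Dupont approximation of the genuine (non-polynomial) nonlinearity $(\nabla\vv)\vv$, whose Sobolev regularity must be handled by a 2D product rule.
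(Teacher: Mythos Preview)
Your overall strategy coincides with the paper's: split into the two non-skew pieces, telescope elementwise into three terms, use H\"older $L^2\!\cdot\!L^4\!\cdot\!L^4$ plus Sobolev embeddings for the ``easy'' terms, and exploit the $L^2$-orthogonality of $(I-\P0)\ww$ to $[\Pk_k(E)]^2$ for the delicate one. The only structural difference is the \emph{order} of the telescoping. The paper inserts the projection on $\ww$ \emph{first}, so that its counterpart of your $T_3$ reads $\int_E (\Gr\vv)\vv\cdot(I-\P0)\ww$ with the \emph{unprojected} product; orthogonality then lets one subtract $\Pi^{0,E}_{k-2}[(\Gr\vv)\vv]$ directly and bound by $Ch_E^{s-1}|(\Gr\vv)\vv|_{s-1,E}\cdot Ch_E|\ww|_{1,E}$, after which summing and the global estimate $|(\Gr\vv)\vv|_{s-1}\le\|\Gr\vv\|_{W^{s-1}_4}\|\vv\|_{W^{s-1}_4}\le C\|\vv\|_{s+1}\|\vv\|_s$ (via $H^s\hookrightarrow W^{s-1}_4$ in 2D) finishes it. Your reversed order forces the projected product $(\PP0\Gr\vv)(\P0\vv)$ against $(I-\P0)\ww$, which is what creates the extra piece~(a).

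As written, your $T_3$ argument has two gaps. In piece~(b) you invoke a \emph{local} product rule $|(\Gr\vv)\vv|_{s,E}\le C\|\vv\|_{s,E}\|\vv\|_{s+1,E}$, but the Sobolev multiplication constant on $E$ is not uniform in $h_E$; one must sum first and apply the product rule globally (and the correct bound is $C\|\vv\|_{s+1}^2$, not $\|\vv\|_s\|\vv\|_{s+1}$). More seriously, in piece~(a) your $L^2\!\cdot\!L^\infty$ split cannot handle the sub-term $(\Gr\vv)[(I-\P0)\vv]$: putting $L^\infty$ on $\Gr\vv$ requires $\vv\in W^{1,\infty}$ (hence $s>1$), while putting $L^\infty$ on $(I-\P0)\vv$ needs an $L^\infty$ approximation estimate you do not have for $\vv\in H^{s+1}$ with small $s$. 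The simplest repair is to observe that piece~(a), after expanding, is structurally identical to $T_1+T_2$ (just tested against $(I-\P0)\ww$ instead of $\ww$) and can be bounded by the very same $L^2\!\cdot\!L^4\!\cdot\!L^4$ arguments; but the cleaner route is the paper's telescoping order, which makes piece~(a) disappear altogether and reduces the orthogonality step to a one-line application of Lemma~\ref{lm:scott} on the true nonlinearity $(\Gr\vv)\vv$ at order $s-1$.
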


\begin{proof}
First of all, we set
\begin{equation}
\label{eq:mu}
\mu_1(\ww) := \sum_{E \in \Omega_h} \left( c^E(\vv; \, \vv, \ww) - c^E_h(\vv; \, \vv, \ww) \right)
\quad \text{and} \quad
\mu_2(\ww) := \sum_{E \in \Omega_h} \left( c^E(\vv; \, \ww, \vv) - c^E_h(\vv; \, \ww, \vv) \right)
\end{equation}
then by definition \eqref{eq:ctilde_h^E} and \eqref{eq:ctilde_h} it holds
\begin{equation}
\label{eq:mu1mu2}
\widetilde{c}(\vv; \, \vv, \ww) - \widetilde{c}_h(\vv; \, \vv, \ww) = 
 \frac{1}{2} \bigl( \mu_1(\ww) + \mu_2(\ww)\bigr).
\end{equation}
We now analyse the two terms. For the term $\mu(\ww)$ by simple computations, we have
\begin{equation*}
\begin{split}
\mu_1(\ww) 
&  =  \sum_{E \in \Omega_h}   \int_E \left( (\Gr \vv) \, \vv \cdot \ww  -  \left(\PP0 \, \Gr \vv \right)  \left(\P0 \vv \right) \cdot \P0 \ww \right) \, {\rm d}E \\
&  =  \sum_{E \in \Omega_h} \sum_{i,j=1}^2    \int_E \left( \frac{\partial \vv_i}{\partial x_j} \, \vv_j \, \ww_i - \left( \Pi^{0,E}_{k-1} \, \frac{\partial \vv_i}{\partial x_j} \right) \left( \P0 \vv_j\right) \P0 \ww_i \right) \, {\rm d}E\\
\end{split}
\end{equation*}
from which it follows
\begin{equation}
\begin{split}
\label{eq:mu1}
\mu_1(\ww) &=  \sum_{E \in \Omega_h}
 \sum_{i,j=1}^2   \int_E \left( \frac{\partial \vv_i}{\partial x_j} \, \vv_j \left[ \left(I - \P0 \right) \ww_i\right] + 
 \right. \\
& \qquad \qquad  \left. + \frac{\partial \vv_i}{\partial x_j} \left[ \left(I - \P0 \right) \vv_j\right] \, \P0 \, \ww_i +
\left[ \left(I - \Pi^{0,E}_{k-1} \right) \frac{\partial \vv_i}{\partial x_j} \right] \left( \P0 \vv_j\right) \P0 \ww_i \right) \, {\rm d}E \\
& =:  \sum_{E \in \Omega_h}
 \sum_{i,j=1}^2  \int_E \left( \alpha(\ww) + \beta(\ww) + \gamma(\ww) \right) {\rm d}E. 
\end{split}
\end{equation}
Now, by definition of $L^2$ projection $\P0$ and by Lemma \ref{lm:scott}, we have
\begin{equation}
\label{eq:mu1alpha}
\begin{split}
\int_E \alpha(\ww) \,{\rm d}E & =    \int_E  \frac{\partial \vv_i}{\partial x_j} \, \vv_j \left[ \left(I - \P0 \right) \ww_i\right] {\rm d}E \\
& =   \int_E \left[ \left(I - \Pi^{0,E}_{k-2} \right)\frac{\partial \vv_i}{\partial x_j} \, \vv_j \right] \left[ \left(I - \P0 \right) \ww_i\right] {\rm d}E \\
& \leq \left \|\left(I - \Pi^{0,E}_{k-2} \right)\frac{\partial \vv_i}{\partial x_j} \, \vv_j \right \|_{0,E} \, \left\| \left(I - \P0 \right) \ww_i\right\|_{0,E} \\
& \leq C  \, h_E^s \, \left| \frac{\partial \vv_i}{\partial x_j} \, \vv_j\right|_{s-1,E} |\ww_i|_{1,E}.
\end{split}
\end{equation}
Applying H\"older inequality (for sequences), we get
\begin{equation}
\label{eq:mu1alpha1}
\begin{split}
\sum_{E \in \Omega_h} \sum_{i,j=1}^2 \int_E \alpha(\ww) \,{\rm d}E & \leq  C \, h^s \, \sum_{E \in \Omega_h} \sum_{i,j=1}^2 \,\left| \frac{\partial \vv_i}{\partial x_j} \, \vv_j\right|_{s-1,E} |\ww_i|_{1,E} \\
&\leq  C \, h^s \, \sum_{i,j=1}^2 \left( \sum_{E \in \Omega_h}  \,\left| \frac{\partial \vv_i}{\partial x_j} \, \vv_j\right|^2_{s-1,E} \right)^{\frac{1}{2}} \left( \sum_{E \in \Omega_h}  |\ww_i|^2_{1,E} \right)^{\frac{1}{2}}\\
&\leq  C \, h^s \, \sum_{i,j=1}^2 \, \left| \frac{\partial \vv_i}{\partial x_j} \, \vv_j\right|_{s-1} \, |\ww_i|_{1}
\end{split}
\end{equation}
and by H\"older inequality and Sobolev embedding $H^{s-1}(\Omega) \subset W^{s}_4(\Omega)$ we infer
\begin{equation}
\label{eq:mu1alpha2}
\left| \frac{\partial \vv_i}{\partial x_j} \, \vv_j\right|_{s-1} \leq 
\left \| \frac{\partial \vv_i}{\partial x_j}  \right\|_{W^{s-1}_4} \,
\left \| \vv_j \right\|_{W^{s-1}_4} 
\leq C \,
\left \| \frac{\partial \vv_i}{\partial x_j}  \right\|_{s} \,
\left \| \vv_j \right\|_{s} .
\end{equation}
By \eqref{eq:mu1alpha1} and \eqref{eq:mu1alpha2} we finally  obtain
\begin{equation}
\label{eq:alphafinale}
\sum_{E \in \Omega_h} \sum_{i,j=1}^2 \alpha(\ww)  \leq  C \, h^s \left \| \vv  \right\|_{s+1} \,
\left \| \vv \right\|_{s} \, \|\ww\|_{\VV}.
\end{equation}
For what concerns the  term $\beta(\ww)$ in \eqref{eq:mu1} using H\"older inequality we have
\begin{equation}
\label{eq:mu1beta}
\begin{split}
\int_E \beta(\ww) \,{\rm d}E & =    \int_E \frac{\partial \vv_i}{\partial x_j} \left[ \left(I - \P0 \right) \vv_j\right] \, \P0 \, \ww_i \, {\rm d}E \\
& \leq 
  \left \| \frac{\partial \vv_i}{\partial x_j} \right\|_{0,E} \,
\left \| \left(I - \P0 \right) \vv_j \right\|_{L^4(E)} \, 
\left\| \P0 \, \ww_i \right\|_{L^4(E)}.
\end{split}
\end{equation}
Lemma \ref{lm:scott} yields a polynomial $\vv_{j, \pi} \in \Pk_k(E)$ such that
\[
\|\vv_j - \vv_{j, \pi} \|_{L^4(E)} \leq C \, h_E^s \, |\vv_j|_{W^s_4(E)}
\]
and thus, by the continuity of $\P0$ with respect the $L^4$-norm (cf. \eqref{eq:continuity-ch3}),
\begin{equation}
\label{eq:mu1beta2}
\begin{aligned}
\left \| \left(I - \P0 \right) \vv_j \right\|_{L^4(E)}  & \leq \|\vv_j - \vv_{j, \pi} \|_{L^4(E)}  + \left\|\P0 \, (\vv_j - \vv_{j, \pi}) \right\|_{L^4(E)} \\
& \leq  C \|\vv_j - \vv_{j, \pi} \|_{L^4(E)} \leq  C \, h_E^s \, |\vv_j|_{W^s_4(E)}.
\end{aligned}
\end{equation}
Using again the continuity of $\P0$ with respect the $L^4$-norm, by \eqref{eq:mu1beta} and \eqref{eq:mu1beta2} we infer
\[
\int_E \beta(\ww) \,{\rm d}E  \leq 
C\, h_E^s \, \left \| \frac{\partial \vv_i}{\partial x_j} \right\|_{0,E} \,|\vv_j|_{W^s_4(E)} \, \| \ww_i \|_{L^4(E)}.
\]
Applying the H\"older inequality and Sobolev embeddings $H^{1}(\Omega) \subset L^4(\Omega)$ and $H^{s+1}(\Omega) \subset W^{s}_4(\Omega)$, we obtain
\begin{equation}
\label{eq:betafinale}
\begin{split}
\sum_{E \in \Omega_h} & \sum_{i,j=1}^2 \int_E \beta(\ww) \,{\rm d}E  \leq  C \, h^s \, \sum_{E \in \Omega_h} \sum_{i,j=1}^2 \, \left \| \frac{\partial \vv_i}{\partial x_j} \right\|_{0,E} \,|\vv_j|_{W^s_4(E)} \, \| \ww_i \|_{L^4(E)}\\
&\leq  C \, h^s \, \sum_{i,j=1}^2 
\left( \sum_{E \in \Omega_h}  \,  \left \| \frac{\partial \vv_i}{\partial x_j} \right\|^2_{0,E} \right)^{\frac{1}{2}} 
\left( \sum_{E \in \Omega_h}  |\vv_j|^4_{W^s_4(E)} \right)^{\frac{1}{4}}
\left( \sum_{E \in \Omega_h} \ |\ww_i\|^4_{L^4(E)} \right)^{\frac{1}{4}}
\\
&\leq  C \, h^s \, \sum_{i,j=1}^2 \, \left\| \frac{\partial \vv_i}{\partial x_j} \right \|_0\, \|\vv_j \|_{W^s_4} \, \|\ww_i\|_{L^4} \leq  C \, h^s \, \left\| \vv \right \|_{\VV} \, \|\vv\|_{s+1} \, \|\ww\|_{\VV}.
\end{split}
\end{equation}
For what concerns the term $\gamma(\ww)$ in \eqref{eq:mu1}, using H\"older and the continuity of $\P0$, it holds
\begin{equation}
\label{eq:mu1gamma}
\begin{split}
\int_E \gamma(\ww)  \,{\rm d}E  &=    \int_E \left[ \left(I - \Pi^{0,E}_{k-1} \right) \frac{\partial \vv_i}{\partial x_j} \right] \left( \P0 \vv_j\right) \P0 \ww_i  \, {\rm d}E  \\
& \leq 
  \left \| \left(I - \Pi^{0,E}_{k-1} \right) \frac{\partial \vv_i}{\partial x_j} \right\|_{0,E} \,
 \|  \P0 \, \vv_j\|_{L^4(E)} \, 
\| \P0 \, \ww_i \|_{L^4(E)} \\
& \leq 
  C \, h_E^s \, \left |  \frac{\partial \vv_i}{\partial x_j} \right|_{s,E} \, \|  \vv_j\|_{L^4(E)} \, 
\| \ww_i \|_{L^4(E)}  . \\
\end{split}
\end{equation}
Using again the H\"older inequality and Sobolev embedding we get
\begin{equation}
\label{eq:gammafinale}
\sum_{E \in \Omega_h} \sum_{i,j=1}^2 \, \int_E \gamma(\ww) \,{\rm d}E  \leq  C \, h^s \, \|\vv\|_{\VV} \, \|\ww\|_{\VV} \, \|\vv\|_{s+1}.
\end{equation}
By collecting \eqref{eq:alphafinale}, \eqref{eq:betafinale} and \eqref{eq:gammafinale} in \eqref{eq:mu1} we finally get
\begin{equation}
\label{eq:mu1finale}
 \mu_1(\ww) \leq C \, h^s \, \left( \|\vv\|_{s+1} \|\vv\|_{s} +\|\vv\|_{s+1} \|\vv\|_{\VV} \right) \|\ww\|_{\VV}.
\end{equation} 
For the second term $\mu_2(\ww)$ we only sketch the proof since we use analogous arguments. First by definition, then by adding and subtracting terms, we obtain 
\begin{equation}
\begin{split}
\label{eq:mu2}
\mu_2(\ww) &= \sum_{E \in \Omega_h}  \sum_{i,j=1}^2   \int_E \left (
\left[ \left(I - \Pi^{0,E}_{k-1} \right) \frac{\partial \ww_i}{\partial x_j} \right]  \vv_j\, \vv_i + \left( \Pi^{0,E}_{k-1} \,\frac{\partial \ww_i}{\partial x_j} \right) \left[ \left( I - \P0 \right) \vv_j \right] \vv_i + \right. \\
& \qquad \qquad \left. 
 + \left( \Pi^{0,E}_{k-1} \, \frac{\partial \ww_i}{\partial x_j} \right)  \left( \P0 \, \vv_j\right) \left[ \left( I - \P0 \right) \vv_i \right] \right) \, {\rm d}E \\
 & =: \sum_{E \in \Omega_h}  \sum_{i,j=1}^2 \int_E \left( \delta(\ww) + \epsilon(\ww) + \zeta(\ww)\right) \, {\rm d}E.
\end{split}
\end{equation}
For the term $\delta(\ww)$ we have
\begin{equation}
\label{eq:mu2delta}
\begin{split}
\int_E \delta(\ww) \,{\rm d}E & =    \int_E  \left[ \left(I - \Pi^{0,E}_{k-1} \right) \frac{\partial \ww_i}{\partial x_j} \right]  \vv_j\, \vv_i \, {\rm d}E \\
& =    \int_E  \left[ \left(I - \Pi^{0,E}_{k-1} \right) \frac{\partial \ww_i}{\partial x_j} \right] \left[ \left(I - \Pi^{0,E}_{k-1}\right) \vv_j\, \vv_i \right] \, {\rm d}E \\
& \leq \left \|\left(I - \Pi^{0,E}_{k-1} \right)\frac{\partial \ww_i}{\partial x_j} \right \|_{0,E} \, \left\| \left(I - \Pi^{0,E}_{k-1} \right) \vv_j\, \vv_i \right\|_{0,E} \\
& \leq C \sum_{i,j=1}^2 \, h_E^s \, \left\| \frac{\partial \ww_i}{\partial x_j} \right\|_{0,E} \, 
|\vv_j \, \vv_i |_{s,E}
\end{split}
\end{equation}
and applying the H\"older inequality (for sequences) we easily get
\[
\sum_{E \in \Omega_h} \sum_{i,j=1}^2 \int_E \delta(\ww) \,{\rm d}E \leq  
C \, h^s \, \sum_{i,j=1}^2 \, \left\| \frac{\partial \ww_i}{\partial x_j} \right\|_{0} \, |\vv_j \, \vv_i |_{s} .
\]
The H\"older inequality and the Sobolev embedding $H^{s+1}(\Omega) \subset W^{s}_4(\Omega)$ yield
\[
|\vv_j \, \vv_i |_{s} \leq \|\vv_j\|_{W^s_4} \,\|\vv_i\|_{W^s_4} \leq C \, \|\vv_j\|_{s+1} \,\|\vv_i\|_{s+1}
\]
and thus we conclude that
\begin{equation}
\label{eq:deltafinale}
\sum_{E \in \Omega_h} \sum_{i,j=1}^2  \delta(\ww)  \leq  C \, h^s \, \|\vv\|_{s+1}^2 \, \|\ww\|_{\VV}.
\end{equation}
The terms $\epsilon(\ww)$ and $\zeta(\ww)$ can be estimated using the usual argument (H\"older inequality, continuity of $\P0$ with respect to the $L^4$-norm and Sobolev embeddings). We conclude that
\begin{equation}
\label{eq:mu2finale}
\mu_2(\ww) \leq C \, h^s \, \left( \|\vv\|_{s+1}^2  +\|\vv\|_{s+1} \|\vv\|_{\VV} \right) \|\ww\|_{\VV}.
\end{equation} 
We infer the thesis by collecting \eqref{eq:mu1finale} and \eqref{eq:mu2finale} in \eqref{eq:mu1mu2}.
\end{proof}

\begin{lemma}
\label{lemma2}
Let $\widehat{C}_h$ be the constant defined in \eqref{eq:CC}. Then for all $\vv, \mathbf{z}, \ww \in \VV$ it holds
\[
|\widetilde{c}_h(\vv; \, \vv, \ww) - \widetilde{c}_h(\mathbf{z}; \, \mathbf{z}, \ww)| \leq   
\widehat{C}_h \,  \left( \|\mathbf{z}\|_{\VV} \, \| \ww\|_{\VV} + \|\vv - \mathbf{z} + \ww\|_{\VV} (\|\vv\|_{\VV} + \|\mathbf{z}\|_{\VV}) \right) \, \|\ww\|_{\VV}.
\]
\end{lemma}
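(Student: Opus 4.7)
The plan is to exploit two structural properties of $\widetilde{c}_h$: its skew-symmetry in the second and third slots (immediate from definition \eqref{eq:ctilde_h^E}, which gives in particular $\widetilde{c}_h(\boldsymbol{\xi};\boldsymbol{\eta},\boldsymbol{\eta})=0$ for all $\boldsymbol{\xi},\boldsymbol{\eta}\in\VV$), and its uniform trilinear continuity with constant $\widehat{C}_h$ inherited from $c_h$ via Proposition~\ref{prp:continuity-ch}. The bulk of the argument is then purely algebraic: one must choose the decomposition that lets the ``$+\ww$'' term appear naturally inside a $\|\cdot\|_{\VV}$ factor.

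First I would rewrite both terms using skew-symmetry:
\[
\widetilde{c}_h(\vv;\vv,\ww)-\widetilde{c}_h(\mathbf{z};\mathbf{z},\ww)= -\widetilde{c}_h(\vv;\ww,\vv)+\widetilde{c}_h(\mathbf{z};\ww,\mathbf{z}).
\]
Next, by inserting $\pm\widetilde{c}_h(\vv;\ww,\mathbf{z})$ and grouping, this equals
\[
-\widetilde{c}_h(\vv;\ww,\vv-\mathbf{z})-\widetilde{c}_h(\vv-\mathbf{z};\ww,\mathbf{z}).
\]
The key move is now to substitute $\vv-\mathbf{z}=(\vv-\mathbf{z}+\ww)-\ww$ in both places. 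In the first term the spurious contribution $\widetilde{c}_h(\vv;\ww,\ww)$ vanishes by skew-symmetry, leaving $-\widetilde{c}_h(\vv;\ww,\vv-\mathbf{z}+\ww)$. In the second term the analogous substitution produces $-\widetilde{c}_h(\vv-\mathbf{z}+\ww;\ww,\mathbf{z})+\widetilde{c}_h(\ww;\ww,\mathbf{z})$, and this last piece does \emph{not} vanish, which is precisely where the leftover summand $\|\mathbf{z}\|_{\VV}\|\ww\|_{\VV}$ in the claimed bound comes from.

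To finish, I would apply the uniform continuity bound \eqref{eq:CC} to each of the three surviving pieces:
\[
|\widetilde{c}_h(\vv;\ww,\vv-\mathbf{z}+\ww)|\le\widehat{C}_h\|\vv\|_{\VV}\|\ww\|_{\VV}\|\vv-\mathbf{z}+\ww\|_{\VV},
\]
\[
|\widetilde{c}_h(\vv-\mathbf{z}+\ww;\ww,\mathbf{z})|\le\widehat{C}_h\|\vv-\mathbf{z}+\ww\|_{\VV}\|\ww\|_{\VV}\|\mathbf{z}\|_{\VV},
\]
\[
|\widetilde{c}_h(\ww;\ww,\mathbf{z})|\le\widehat{C}_h\|\ww\|_{\VV}^2\|\mathbf{z}\|_{\VV}.
\]
Summing and factoring out $\|\ww\|_{\VV}$ reproduces exactly the right-hand side of the lemma.

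The principal obstacle is not estimation but the combinatorial choice of the decomposition: more ``obvious'' splittings, such as $\widetilde{c}_h(\vv-\mathbf{z};\vv,\ww)+\widetilde{c}_h(\mathbf{z};\vv-\mathbf{z},\ww)$, lead to bounds involving $\|\vv-\mathbf{z}\|_{\VV}$ rather than the required $\|\vv-\mathbf{z}+\ww\|_{\VV}$, and one has to notice that a second use of skew-symmetry together with the identity $\vv-\mathbf{z}=(\vv-\mathbf{z}+\ww)-\ww$ is what triggers the cancellation $\widetilde{c}_h(\vv;\ww,\ww)=0$ at the right moment while retaining a harmless residual of the form $\widetilde{c}_h(\ww;\ww,\mathbf{z})$.
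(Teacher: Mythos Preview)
Your proof is correct and follows essentially the same approach as the paper: an algebraic decomposition using linearity in each slot together with the skew-symmetry $\widetilde{c}_h(\cdot;\boldsymbol{\eta},\boldsymbol{\eta})=0$ and the substitution $\vv-\mathbf{z}=(\vv-\mathbf{z}+\ww)-\ww$, followed by the continuity bound \eqref{eq:CC} on each of the three surviving terms. The paper arrives at the equivalent three-term identity
\[
\widetilde{c}_h(\vv;\vv,\ww)-\widetilde{c}_h(\mathbf{z};\mathbf{z},\ww)
=-\widetilde{c}_h(\ww;\vv,\ww)+\widetilde{c}_h(\vv-\mathbf{z}+\ww;\vv,\ww)+\widetilde{c}_h(\mathbf{z};\vv-\mathbf{z}+\ww,\ww),
\]
obtained by first splitting $\widetilde{c}_h(\vv;\vv,\ww)-\widetilde{c}_h(\mathbf{z};\mathbf{z},\ww)=\widetilde{c}_h(\vv-\mathbf{z};\vv,\ww)+\widetilde{c}_h(\mathbf{z};\vv-\mathbf{z},\ww)$ and then inserting $\ww$; your route (swap second and third slots first, then split) is a harmless reordering of the same manipulations, and in fact your residual term $\widetilde{c}_h(\ww;\ww,\mathbf{z})$ matches the $\|\mathbf{z}\|_{\VV}\|\ww\|_{\VV}$ factor in the stated estimate exactly.
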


\begin{proof}
Since $\widetilde{c}_h(\cdot; \, \cdot, \cdot)$ is skew-symmetric by simple computations we obtain
\[
\begin{split}
\widetilde{c}_h(\vv; \, \vv, \ww) - \widetilde{c}_h(\mathbf{z}; \, \mathbf{z}, \ww) & =
\widetilde{c}_h(\vv - \mathbf{z}; \, \vv, \ww) + \widetilde{c}_h(\mathbf{z}; \, \vv - \mathbf{z}, \ww) \\
& =  
-\widetilde{c}_h(\ww; \,  \vv, \ww) + \widetilde{c}_h(\vv - \mathbf{z} + \ww; \, \vv, \ww) + \widetilde{c}_h(\mathbf{z}; \, \vv - \mathbf{z} + \ww, \ww).
\end{split}
\]
The thesis follows by definition \eqref{eq:CC}.
\end{proof}

Furthermore, we state the following result concerning the load approximation, which can be proved using standard arguments \cite{volley}.
\begin{lemma}
\label{lemma4}
Let $\mathbf{f}_h$ be defined as in \eqref{eq:f_h},  and let us assume $\mathbf{f} \in H^{s+1}(\Omega)$, $-1 \le s \le k$. Then, for all $\mathbf{v}_h \in \mathbf{V}_h$, it holds
\begin{gather*}
\left|( \mathbf{f}_h - \mathbf{f}, \mathbf{v}_h ) \right| \leq C h^{s+2} |\mathbf{f}|_{s+1} |\mathbf{v}_h|_{\mathbf{V}}.
\end{gather*}
\end{lemma}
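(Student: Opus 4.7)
The plan is to exploit the $L^2$-orthogonality of the projector $\Pi^{0,E}_k$, following the standard VEM argument for load term consistency (cf. \cite{volley}). First, I would split the difference elementwise,
\[
(\ff_h - \ff, \vv_h) = \sum_{E \in \Omega_h} \int_E (\Pi^{0,E}_k \ff - \ff) \cdot \vv_h \,{\rm d}E,
\]
and observe that, on each $E$, the constant $\Pi^{0,E}_0 \vv_h$ belongs to $[\Pk_k(E)]^2$. Since by definition \eqref{eq:P0_k^E} the quantity $\Pi^{0,E}_k \ff - \ff$ is $L^2(E)$-orthogonal to every polynomial of degree $\leq k$, I can subtract $\Pi^{0,E}_0 \vv_h$ without altering the integral:
\[
\int_E (\Pi^{0,E}_k \ff - \ff) \cdot \vv_h \,{\rm d}E = \int_E (\Pi^{0,E}_k \ff - \ff) \cdot (\vv_h - \Pi^{0,E}_0 \vv_h) \,{\rm d}E.
\]

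Applying Cauchy--Schwarz elementwise, the first factor is bounded by standard polynomial approximation theory, combining Lemma \ref{lm:scott} with the fact that $\Pi^{0,E}_k$ realizes the $L^2$-best approximation in $[\Pk_k(E)]^2$, giving
\[
\|\Pi^{0,E}_k \ff - \ff\|_{0,E} \leq C \, h_E^{s+1} \, |\ff|_{s+1,E} \qquad \text{for } -1 \leq s \leq k,
\]
where for $s = -1$ the bound reduces to the $L^2$-stability of the projector. The second factor is controlled by the scaled Poincar\'e inequality on the star-shaped element $E$ (assumption $\mathbf{(A1)}$):
\[
\|\vv_h - \Pi^{0,E}_0 \vv_h\|_{0,E} \leq C \, h_E \, |\vv_h|_{1,E}.
\]

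Summing the resulting elementwise bound $C \, h_E^{s+2} \, |\ff|_{s+1,E} \, |\vv_h|_{1,E}$ over $E \in \Omega_h$ and applying the discrete Cauchy--Schwarz inequality for sequences then yields the desired estimate. The argument is essentially routine and there is no real obstacle; the only subtle point worth stressing is that the natural choice on the virtual side is the degree-zero projection $\Pi^{0,E}_0 \vv_h$, because the only global regularity we may exploit for $\vv_h \in \VV_h$ is $H^1$, which limits the gain from the Poincar\'e step to a single power of $h_E$ and consequently fixes the overall rate at $h^{s+2}$.
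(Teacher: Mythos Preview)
Your argument is correct and is precisely the standard VEM load-approximation estimate that the paper invokes by citing \cite{volley} without spelling out the details. The orthogonality trick (subtracting $\Pi^{0,E}_0 \vv_h$), the elementwise Cauchy--Schwarz, the polynomial approximation bound on $\ff$, and the scaled Poincar\'e inequality on $\vv_h$ are exactly the ingredients of that standard proof, so there is nothing to add.
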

We now note that, given $\vv \in \ZZ$, the inf-sup condition \eqref{eq:inf-sup discreta} implies (see \cite{BoffiBrezziFortin}):
\[
\inf_{\vv_h \in \ZZ_h, \vv_h \neq \mathbf{0}} \|\vv - \vv_h\|_{\VV} \leq C
\inf_{\ww_h \in \VV_h, \ww_h \neq \mathbf{0}} \|\vv - \ww_h\|_{\VV}
\]
which essentially means that $\ZZ$ is approximated by $\ZZ_h$ with the same accuracy order of the
whole subspace $\VV_h$. In particular by Theorem \ref{thm:interpolante}, assuming 
$\vv \in H^{s+1}(\Omega) \cap \ZZ$, $0 < s \le k$, we infer
\begin{equation}
\label{eq:interpolant kernel}
\inf_{\vv_h \in \ZZ_h, \vv_h \neq \mathbf{0}} \|\vv - \vv_h\|_{\VV} \leq C \, h^s \, | \vv |_{s+1}.
\end{equation}

\begin{theorem}
\label{thm:u}
Under the assumptions \eqref{eq:ns condition} and \eqref{eq:ns virtual condition}, let $\uu$ be the solution of Problem \eqref{eq:ns variazionale ker} and $\uu_h$ be the solution of virtual Problem \eqref{eq:nsvirtual ker}. Assuming moreover $\uu, \ff \in [H^{s+1}(\Omega)]^2$, $0 < s \le k$, then 
\begin{equation}\label{eq:thm:u}
\| \uu - \uu_h \|_{\VV} \leq \, h^{s} \, \mathcal{F}(\uu; \, \nu, \gamma, \gamma_h) + \, h^{s+2} \, \mathcal{H}(\ff; \nu, \gamma_h)
\end{equation}
where $\mathcal{F}$ and $\mathcal{H}$ are suitable functions independent of $h$.
\end{theorem}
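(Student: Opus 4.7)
The strategy is to work in the kernel formulations \eqref{eq:ns variazionale ker} and \eqref{eq:nsvirtual ker}, which is permitted because \eqref{kernincl} ensures $\ZZ_h \subseteq \ZZ$, so any discrete kernel element can legitimately test the continuous equation. I would introduce an interpolant $\uu_I \in \ZZ_h$ of $\uu$, whose approximation is controlled by Theorem \ref{thm:interpolante} combined with \eqref{eq:interpolant kernel}: $\|\uu - \uu_I\|_\VV \le C h^s |\uu|_{s+1}$. Decomposing $\uu - \uu_h = (\uu - \uu_I) + (\uu_I - \uu_h) =: \eta + \ddd_h$, the task reduces by the triangle inequality to estimating $\|\ddd_h\|_\VV$.

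To bound $\|\ddd_h\|_\VV$, I would start from the discrete coercivity \eqref{eq:stabk}, writing $\nu\alpha_\ast\|\ddd_h\|_\VV^2 \le \nu a_h(\uu_I,\ddd_h) - \nu a_h(\uu_h,\ddd_h)$ and replacing the second term via the discrete equation $\nu a_h(\uu_h,\ddd_h) = (\ff_h,\ddd_h) - \widetilde{c}_h(\uu_h;\uu_h,\ddd_h)$. Testing the continuous equation against $\ddd_h\in\ZZ$ gives $(\ff,\ddd_h) = \nu a(\uu,\ddd_h) + \widetilde{c}(\uu;\uu,\ddd_h)$, so after inserting and rearranging one arrives at
\begin{equation*}
\nu\alpha_\ast\|\ddd_h\|_\VV^2 \le \nu\bigl(a_h(\uu_I,\ddd_h) - a(\uu,\ddd_h)\bigr) + (\ff - \ff_h,\ddd_h) + \bigl(\widetilde{c}_h(\uu_h;\uu_h,\ddd_h) - \widetilde{c}(\uu;\uu,\ddd_h)\bigr).
\end{equation*}

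The first bracket is a standard VEM consistency term: inserting a piecewise polynomial $\uu_\pi$ from Lemma \ref{lm:scott} and exploiting $k$-consistency \eqref{eq:consist} gives $a_h(\uu_I,\ddd_h) - a(\uu,\ddd_h) = a_h(\uu_I - \uu_\pi,\ddd_h) + a(\uu_\pi - \uu,\ddd_h)$, controlled by $Ch^s|\uu|_{s+1}\|\ddd_h\|_\VV$ via the continuity induced by \eqref{eq:stabk} together with \eqref{eq:scott} and Theorem \ref{thm:interpolante}. The load discrepancy is bounded directly by Lemma \ref{lemma4}, producing $C h^{s+2}|\ff|_{s+1}\|\ddd_h\|_\VV$. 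For the convective bracket I would insert $\widetilde{c}_h(\uu;\uu,\ddd_h)$: the consistency part $\widetilde{c}_h(\uu;\uu,\ddd_h) - \widetilde{c}(\uu;\uu,\ddd_h)$ is bounded by Lemma \ref{lemma3}, while Lemma \ref{lemma2} applied with $\vv=\uu_h$, $\mathbf{z}=\uu$, $\ww=\ddd_h$ controls $\widetilde{c}_h(\uu_h;\uu_h,\ddd_h) - \widetilde{c}_h(\uu;\uu,\ddd_h)$. The crucial simplification is the identity $\uu_h - \uu + \ddd_h = -\eta$, so that remainder only involves the interpolation error and one quadratic-in-$\ddd_h$ term.

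The main obstacle is precisely this quadratic contribution $\widehat{C}_h\|\uu\|_\VV\|\ddd_h\|_\VV^2$ produced by Lemma \ref{lemma2}. I would absorb it into the left-hand side using the a priori bounds $\|\uu\|_\VV \le \|\ff\|_{H^{-1}}/\nu$ from \eqref{eq:solution estimates} and $\|\uu_h\|_\VV \le \|\ff_h\|_{H^{-1}}/(\alpha_\ast\nu)$ from \eqref{eq:solution virtual estimates}, combined with the smallness conditions \eqref{eq:ns condition} and \eqref{eq:ns virtual condition}, which together guarantee that the effective coefficient $\nu\alpha_\ast - \widehat{C}_h\|\uu\|_\VV$ remains strictly positive and bounded away from zero by a constant depending only on $\gamma$, $\gamma_h$ and $\alpha_\ast$. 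Dividing through by $\|\ddd_h\|_\VV$, estimating $\|\uu_h\|_\VV + \|\uu\|_\VV$ inside the $\|\eta\|_\VV$ contribution by the same a priori bounds, and finally restoring $\|\uu - \uu_I\|_\VV$ via the triangle inequality, one assembles the claimed estimate \eqref{eq:thm:u}, in which the $h^s$ terms collect the viscous, VEM and convective consistency contributions (packaged as $\mathcal{F}$) while the $h^{s+2}$ term originates solely from the load approximation (packaged as $\mathcal{H}$).
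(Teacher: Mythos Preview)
Your proof is essentially the same as the paper's: the same interpolant $\uu_I\in\ZZ_h$, the same coercivity/consistency splitting of $a_h$ via $\uu_\pi$, the same use of Lemma~\ref{lemma4} for the load, and the same decomposition of the convective error into a consistency part handled by Lemma~\ref{lemma3} and a remainder handled by Lemma~\ref{lemma2}.

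One detail deserves attention. You apply Lemma~\ref{lemma2} with $\vv=\uu_h$, $\mathbf z=\uu$, which produces the quadratic term $\widehat C_h\|\uu\|_\VV\|\ddd_h\|_\VV^2$ and thus requires $\nu\alpha_* - \widehat C_h\|\uu\|_\VV$ to be bounded below by a positive constant. This quantity mixes the discrete constant $\widehat C_h$ with the continuous solution bound $\|\uu\|_\VV\le\|\ff\|_{H^{-1}}/\nu$; it is \emph{not} directly controlled by either $\gamma=\widehat C\|\ff\|_{-1}/\nu^2<1$ or $\gamma_h=\widehat C_h\|\ff_h\|_{-1}/(\alpha_*^2\nu^2)\le r<1$, so the absorption step as you state it is not rigorously justified by the assumptions of the theorem. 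The paper instead applies Lemma~\ref{lemma2} with the roles reversed ($\vv=\uu$, $\mathbf z=\uu_h$), obtaining the quadratic term $\widehat C_h\|\uu_h\|_\VV\|\ddd_h\|_\VV^2$; then \eqref{eq:solution virtual estimates} yields
\[
1-\frac{\widehat C_h\|\uu_h\|_\VV}{\alpha_*\nu}\ \ge\ 1-\frac{\widehat C_h\|\ff_h\|_{H^{-1}}}{\alpha_*^2\nu^2}\ =\ 1-\gamma_h\ \ge\ 1-r\ >\ 0,
\]
which is precisely the smallness hypothesis \eqref{eq:ns virtual condition}. Swapping the roles in your application of Lemma~\ref{lemma2} fixes this and makes your argument coincide with the paper's.
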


\begin{proof}
Let $\uu_I$ be an approximant of $\uu$ in the discrete kernel $\ZZ_h$ satisfying \eqref{eq:interpolant kernel}, and let us define $\ddd_h := \uu_h - \uu_I$. Now, by the stability and the consistency properties (cf. \eqref{eq:consist} and \eqref{eq:stabk}) of the bilinear form $a_h(\cdot, \cdot)$, the triangular inequality and \eqref{eq:interpolant kernel} give
\begin{equation}
\label{eq:thm1}
\begin{split}
\alpha_* \, \nu \, \| \ddd_h \|^2_{\VV}  & \leq \nu \, a_h(\ddd_h, \, \ddd_h) =  \nu \, a_h(\uu_h, \, \ddd_h) -  \nu \, a_h(\uu_I, \, \ddd_h)  
\\
& = \nu \, a_h(\uu_h, \, \ddd_h) - \nu\,  a(\uu, \, \ddd_h) +  \nu \sum_{E \in \Omega_h} \left( a_h^E(\uu_{\pi} - \uu_I, \, \ddd_h)  + a^E(\uu - \uu_{\pi}, \, \ddd_h) \right)  \\
& \leq \nu \, a_h(\uu_h, \, \ddd_h) - \nu\,  a(\uu, \, \ddd_h) +  
C \, \nu \, h^s |\uu |_{s+1}\| \ddd_h \|_{\VV} \\
\end{split}
\end{equation}
where $\uu_{\pi}$ is the piecewise polynomial of degree $k$ defined  in Lemma \ref{lm:scott}.
Now since $\uu$ and $\uu_h$ are solutions of Problem \eqref{eq:ns variazionale ker} and Problem \eqref{eq:nsvirtual ker}  respectively, from Lemma \ref{lemma4} we obtain 
\begin{equation}
\label{eq:thm2}
\begin{split}
\alpha_* \, \nu \, \| \ddd_h \|^2_{\VV}  
& \leq (\ff_h - \ff,\, \ddd_h) +  \widetilde{c}(\uu; \, \uu, \ddd_h) - \widetilde{c}_h(\uu_h; \, \uu_h, \ddd_h)    +  C \, \nu \, h^s | \uu |_{s+1}\|\ddd_h\|_{\VV} \\
& \leq C \, h^s (\nu \, |\uu |_{s+1} + h^2 \, |\ff|_{s+1})\|\ddd_h\|_{\VV} + \widetilde{c}(\uu; \, \uu, \ddd_h)  - \widetilde{c}_h(\uu_h; \, \uu_h, \ddd_h).
\end{split}
\end{equation}
Now we observe that
\begin{equation}
\label{eq:lemma3thm}
\widetilde{c}(\uu; \, \uu, \ddd_h)  - \widetilde{c}_h(\uu_h; \, \uu_h, \ddd_h) = 
\bigl(\widetilde{c}(\uu; \, \uu, \ddd_h) - \widetilde{c}_h(\uu; \, \uu, \ddd_h) \bigr) +
\bigl(\widetilde{c}_h(\uu; \, \uu, \ddd_h) - \widetilde{c}_h(\uu_h; \, \uu_h, \ddd_h) \bigr).
\end{equation}
The first term can be estimated by Lemma \ref{lemma3}
\[
\widetilde{c}(\uu; \, \uu, \ddd_h) - \widetilde{c}_h(\uu; \, \uu, \ddd_h) \leq C \, h^s \, \left( \|\uu\|_s + \|\uu\|_{\VV} + \|\uu\|_{s+1} \right)\|\uu\|_{s+1} \, \|\ddd_h\|_{\VV}.
\] 
The second term, recalling that $\ddd_h = \uu_h - \uu_I$, is bounded by Lemma \ref{lemma2}
\begin{equation}
\label{eq:lemma2thm}
\widetilde{c}_h(\uu; \, \uu, \ddd_h) - \widetilde{c}_h(\uu_h; \, \uu_h, \ddd_h) \leq
\widehat{C}_h \,  \bigl( \|\uu_h\|_{\VV} \, \| \ddd_h\|_{\VV} + \|\uu - \uu_I\|_{\VV} (\|\uu\|_{\VV} + \| \uu_h\|_{\VV}) \bigr) \, \|\ddd_h\|_{\VV}.
\end{equation}
Collecting \eqref{eq:lemma3thm} and \eqref{eq:lemma2thm} in \eqref{eq:thm2}, we get
\begin{multline}
\label{eq:thm3}
\alpha_* \, \nu \, \| \ddd_h \|_{\VV}   \leq  C \, h^s (\nu \, |\uu|_{s+1} + h^2 \, |\ff|_{s+1}) + 
C \, h^s \, \left( \|\uu\|_s + \|\uu\|_{\VV} + \|\uu\|_{s+1} \right)\|\uu\|_{s+1} + \\
 + \widehat{C}_h \,  \bigl( \|\uu_h\|_{\VV} \, \| \ddd_h\|_{\VV} + \|\uu - \uu_I\|_{\VV} (\|\uu\|_{\VV} + \| \uu_h\|_{\VV}) \bigr)
\end{multline}
and then by  Theorem \ref{thm:interpolante} we infer
\begin{multline}
\label{eq:thm4}
\alpha_* \, \nu \left( 1 - \frac{\widehat{C}_h \, \|\uu_h\|_{\VV} }{\alpha_* \, \nu}\right) \, \| \ddd_h \|_{\VV}   \leq  C \, h^s (\nu \, |\uu|_{s+1} + h^2 \, |\ff|_{s+1}) + \\
 + C \, h^s \, \left( \|\uu\|_s + \|\uu\|_{\VV} + \|\uu\|_{s+1} \right)\|\uu\|_{s+1} +   C \, h^s \, \|\uu\|_{s+1} \, \widehat{C}_h \,(\|\uu\|_{\VV} + \| \uu_h\|_{\VV}).
\end{multline}
We observe now that from \eqref{eq:solution virtual estimates} and \eqref{eq:ns virtual condition}, it holds
\[
1 - \frac{\widehat{C}_h \, \|\uu_h\|_{\VV} }{\alpha_* \, \nu} \geq 1 -  \frac{\widehat{C}_h \, \|\ff_h\|_{H^{-1}} }{(\alpha_* \, \nu)^2} \ge 1 - r > 0.
\]
Therefore
\begin{multline*}
\| \ddd_h \|_{\VV}   \leq  
C \, \frac{h^s}{1 - r}  \left(|\uu|_{s+1} +   \frac{h^{2} }{\nu} \|\ff\|_{s+1}\right) + 
C \, \frac{h^s}{\nu(1 - r)} \, \left( \|\uu\|_s + \|\uu\|_{\VV} + \|\uu\|_{s+1} \right)\|\uu\|_{s+1} \\ +
C \, h^s \, \|\uu\|_{s+1} \, \frac{\widehat{C}_h}{  \nu (1 - \gamma_h)} \,(\|\uu\|_{\VV} + \| \uu_h\|_{\VV})
\end{multline*}
and from \eqref{eq:solution estimates}, \eqref{eq:ns condition},  \eqref{eq:solution virtual estimates} and \eqref{eq:ns virtual condition} we finally obtain
\begin{multline*}
\| \ddd_h \|_{\VV}   \leq  
C \, \frac{h^s}{1 - r}  \left(|\uu|_{s+1} +   \frac{h^{2} }{\nu} \|\ff\|_{s+1}\right) + 
C \, \frac{h^s}{\nu(1 - r)} \, \left( \|\uu\|_s + \|\uu\|_{\VV} + \|\uu\|_{s+1} \right)\|\uu\|_{s+1} + \\ +
C \, h^s \, \|\uu\|_{s+1} \, \left(  \frac{\widehat{C}_h}{ \widehat{C}} \frac{\gamma}{1 - r} + \frac{\gamma_h}{1 - r} \right).
\end{multline*}
The thesis easily follows from the triangular inequality.
\end{proof}
\begin{remark}
We observe that, due to the divergence-free property of the proposed method, the estimate on the velocity errors in Theorem \ref{thm:u} does not depend on the continuous pressure, whereas the velocity errors of classical methods have a pressure contribution. 
A numerical investigation of this aspect, also in relation to the presence of a higher order load approximation term in the right hand side of 
\eqref{eq:thm:u}, will be shown in the next section.
\end{remark}

\begin{remark}
From the discrete inf-sup condition \eqref{eq:inf-sup discreta} the pressure estimate easily follows by standard arguments. Let $(\uu, p) \in \VV \times Q$ be the solution of Problem \eqref{eq:ns variazionale} and $(\uu_h, p_h) \in \VV_h \times Q_h$ be the solution of Problem \eqref{eq:ns virtual}. Then it holds:
\begin{equation}\label{eq:p-est}
\|p  - p_h\|_Q \leq C \, h^{s} \, |p|_{s} + C \, h^{s+2} \, |\ff|_{s+1} + h^s \, \mathcal{K}(\uu; \nu, \gamma, \gamma_h) 
\end{equation}
for a suitable function $\mathcal{K}(\cdot; \,  \cdot, \cdot)$ independent of $h$. 
\end{remark}

\begin{remark}
In Theorem \ref{thm:u} we have assumed $\uu$ and ${\bf f}$ in $H^{s+1}(\Omega)$. However, it is easy to check that the same analysis can be performed if we only require: 
$$
\uu, \: {\bf f} \in H^{s+1}(E) \quad \forall E \in \Omega_h. 
$$
In such a case, the higher order Sobolev norms on $\uu, {\bf f}$ appearing in Theorem \ref{thm:u} (and in the other results of this section) are substituted with the corresponding element-wise broken Sobolev norms.
\end{remark}

%----------------------------------------------------------------------------------------------------------------------------
\section{Numerical Tests}
\label{sec:5}

In this section we present six sets of numerical experiments to test the practical performance of the method. 
All the tests are performed with the second-order VEM, i.e. $k=2$. We also consider suitable second order Finite Elements for comparison.
In almost all cases, both options ${c}_h(\cdot;\cdot,\cdot)$ and ${\widetilde c}_h(\cdot;\cdot,\cdot)$ (see Remark \ref{rem:non-skew}) yield very similar results; in such cases, only the first choice is reported. On the contrary, whenever the results between the two choices are significantly different, both outcomes are shown.

In Test \ref{test1} and Test \ref{test2}, we consider two benchmark problems for the Stokes and Navier-Stokes equation. They share the property of having the velocity solution in the discrete space. However, classical mixed finite element methods lead to significant velocity errors, stemming from the velocity/pressure coupling in the error estimates. This effect is greatly reduced (or even neglected) by our VEM methods (cf. Theorem \ref{thm:u} and estimate \eqref{eq:p-est}). 
In Test \ref{test3} we analyse the stability of the method with respect to the viscosity parameter $\nu$. 
In Test \ref{test4} and Test \ref{test5} we study the convergence of the proposed method for the Navier-Stokes and Stokes equations, respectively.
A comparison with the triangular P2-P1 and the quadrilateral Q2-P1 mixed finite element methods, see for example \cite{BoffiBrezziFortin}, is also performed. 
Finally in Test \ref{test6} we assess the proposed virtual element method for flows which are governed by the Stokes system on one part of the domain, and by the Darcy's law in the rest of the domain, the solutions in the two domains being coupled by proper interface conditions (see Remark \ref{rem:Sto-Dar}). \\
In order to compute the VEM errors, we consider the computable error quantities:
\begin{gather*}
{\rm error}(\mathbf{u}, H^1) := \left( \sum_{E \in \Omega_h} \left \| \boldsymbol{\nabla} \, \uu -  \boldsymbol{\Pi}_{k-1}^{0, E} (\boldsymbol{\nabla} \, \uu_h) \right \|_{0,K}^2 \right)^{1/2} 
\\
{\rm error}(\mathbf{u}, L^2)  := \left(\sum_{E \in \Omega_h} \left \| \uu -  \Pi_{k}^{0, E}  \,\uu_h \right \|_{0,E}^2 \right)^{1/2} 
\\
{\rm error}(\mathbf{u}, L^{\infty})  := \max_{\mathbf{x} \in \, {\rm nodes}} | \uu(\mathbf{x}) - \uu_h(\mathbf{x})|
\end{gather*}
where in the previous formula ``nodes'' denotes the set of internal edges nodes and internal vertexes (cf. $\mathbf{D_V1}$ and $\mathbf{D_V2}$). For the pressures we simply compute 
\[
{\rm error}(p, L^2)  := \|p - p_h\|_0.
\]
The polynomial degree of accuracy for the numerical tests is $k=2$.
In the experiments we consider the computational domains $\Omega_{\rm Q} := [0,1]^2$ and $\Omega_{\rm D} := \{\mathbf{x} \in \R^2 \, \text{s.t.} \, |\mathbf{x}| \leq 1 \}$. The square domain $\Omega_{\rm Q}$ is partitioned using the following sequences of polygonal meshes:
\begin{itemize}
\item $\{ \mathcal{Q}_{h} \}_h, \, \{ \mathcal{U}_{h}\}_h$: sequences of distorted quadrilateral meshes with $h=1/10, 1/20, 1/40, 1/80$,
\item $\{ \mathcal{T}_h\}_h$: sequence of triangular meshes with $h=1/5, 1/10, 1/20, 1/40$,
\item $\{ \mathcal{W}_h\}_h$: sequence of WEB-like meshes with $h= 1/5, 1/10,  1/20, 1/40$.
\end{itemize}
An example of the adopted meshes is shown in Figure \ref{meshq}.  
\begin{figure}[!h]
\centering
\includegraphics[scale=0.25]{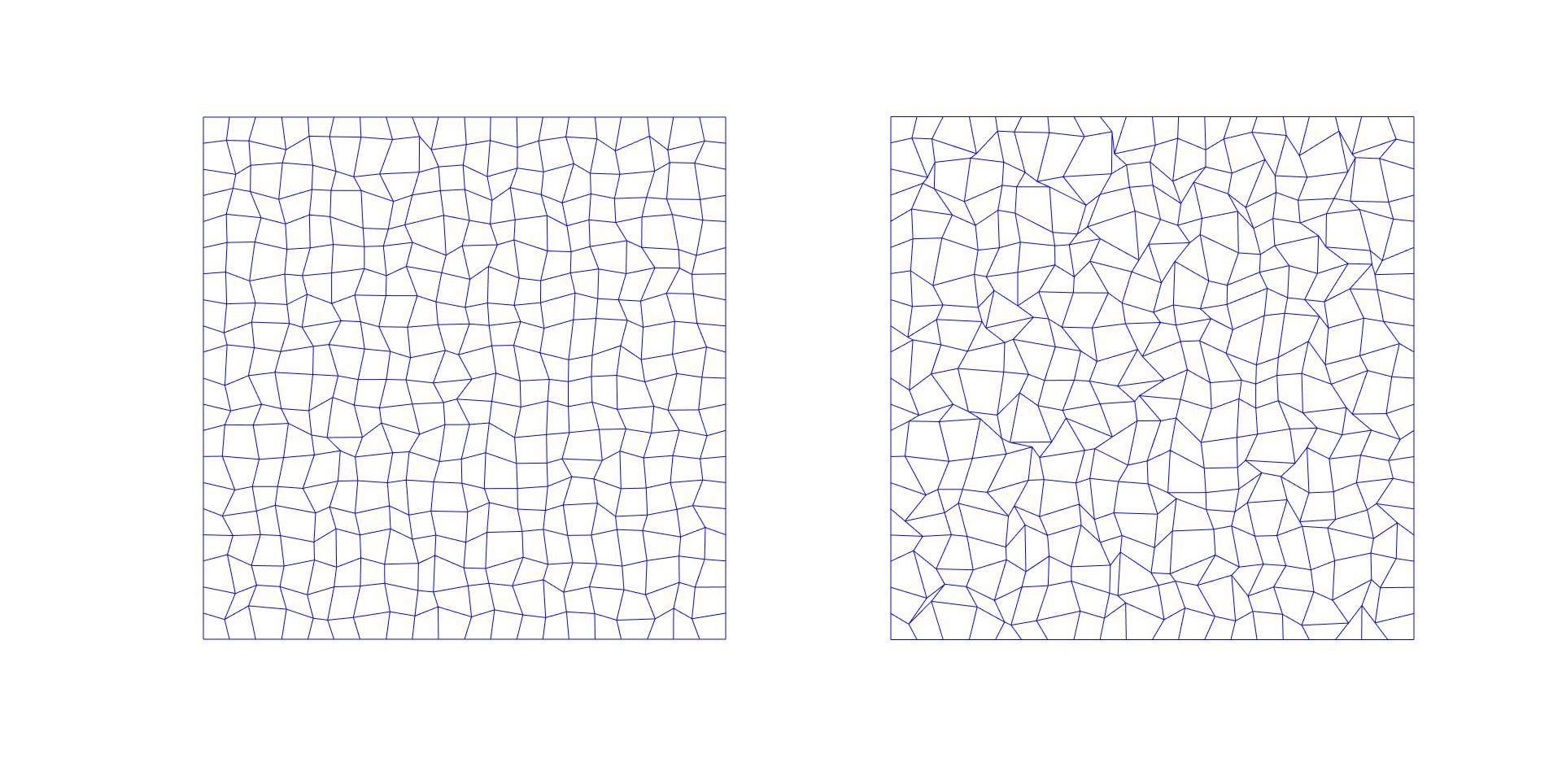} 
\includegraphics[scale=0.25]{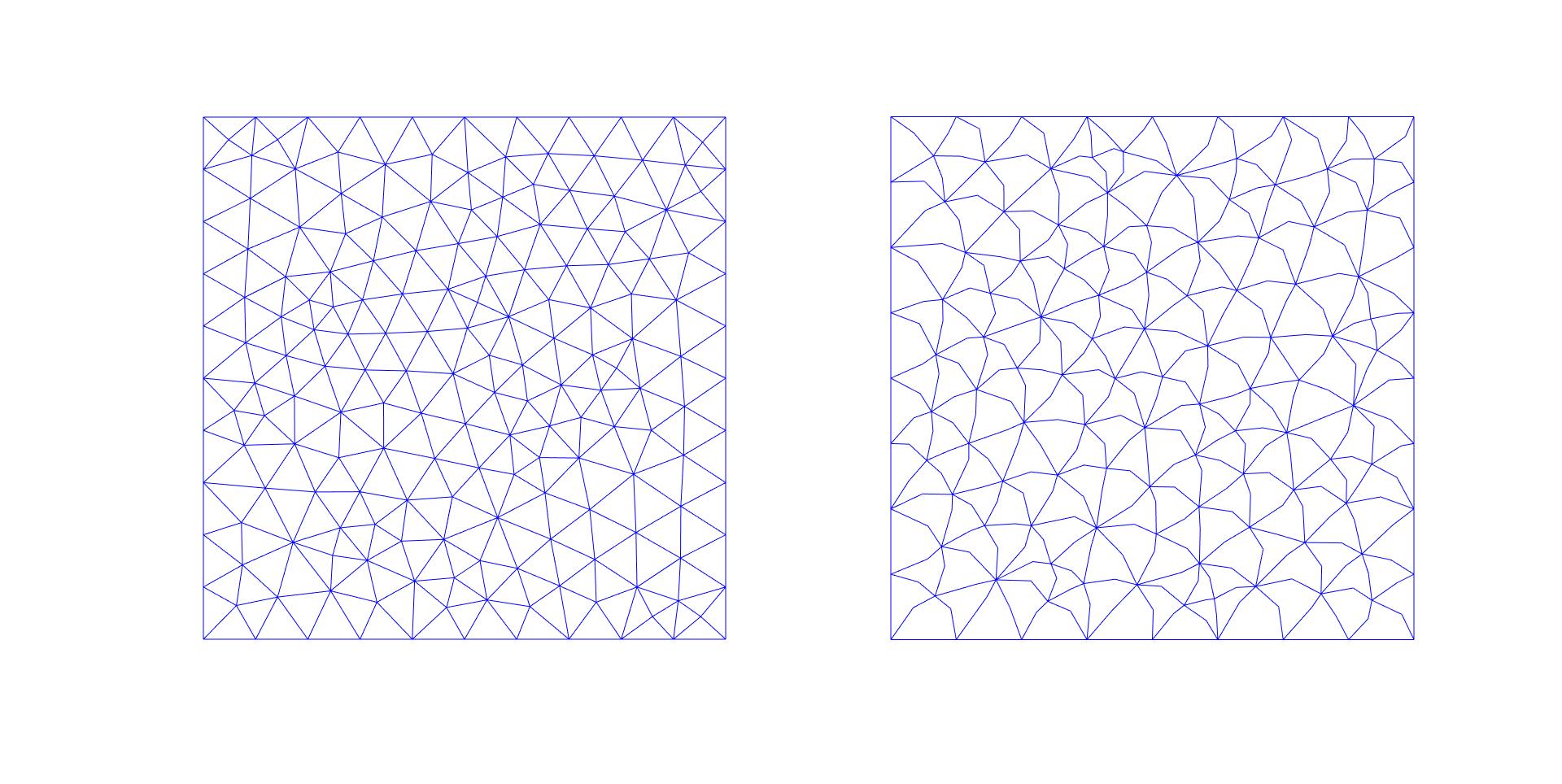} 
\caption{Example of the adopted polygonal meshes: $\mathcal{Q}_{1/20}$, $\mathcal{U}_{1/20}$ (up);  $\mathcal{T}_{1/10}$, $\mathcal{W}_{1/10}$ (down).}
\label{meshq}
\end{figure}
The distorted quadrilateral meshes are obtained starting from the uniform square meshes and displacing the internal vertexes (with a proportional ``distortion amplitude'' of $0.3$ for $\mathcal{Q}_h$ and $0.5$ for $\mathcal{U}_h$). The non-convex WEB-like meshes are composed by hexagons, generated starting from the triangular meshes $\{ \mathcal{T}_h\}_h$ and randomly displacing the midpoint of each (non boundary) edge.
For what concerns the disk $\Omega_{\rm D}$ we consider the sequences of polygonal meshes:
\begin{itemize}
\item $\{ \mathcal{T}_h\}_h$: sequence of triangular meshes with $h=1/5, 1/10, 1/20, 1/40$,
\item $\{ \mathcal{V}_h\}_h$: sequence of CVT Voronoi meshes with $h= 1/5, 1/10, 1/20, 1/40$.
\end{itemize}
Figure \ref{meshd} displays an example of the adopted meshes.
\begin{figure}[!h]
\centering
\includegraphics[scale=0.25]{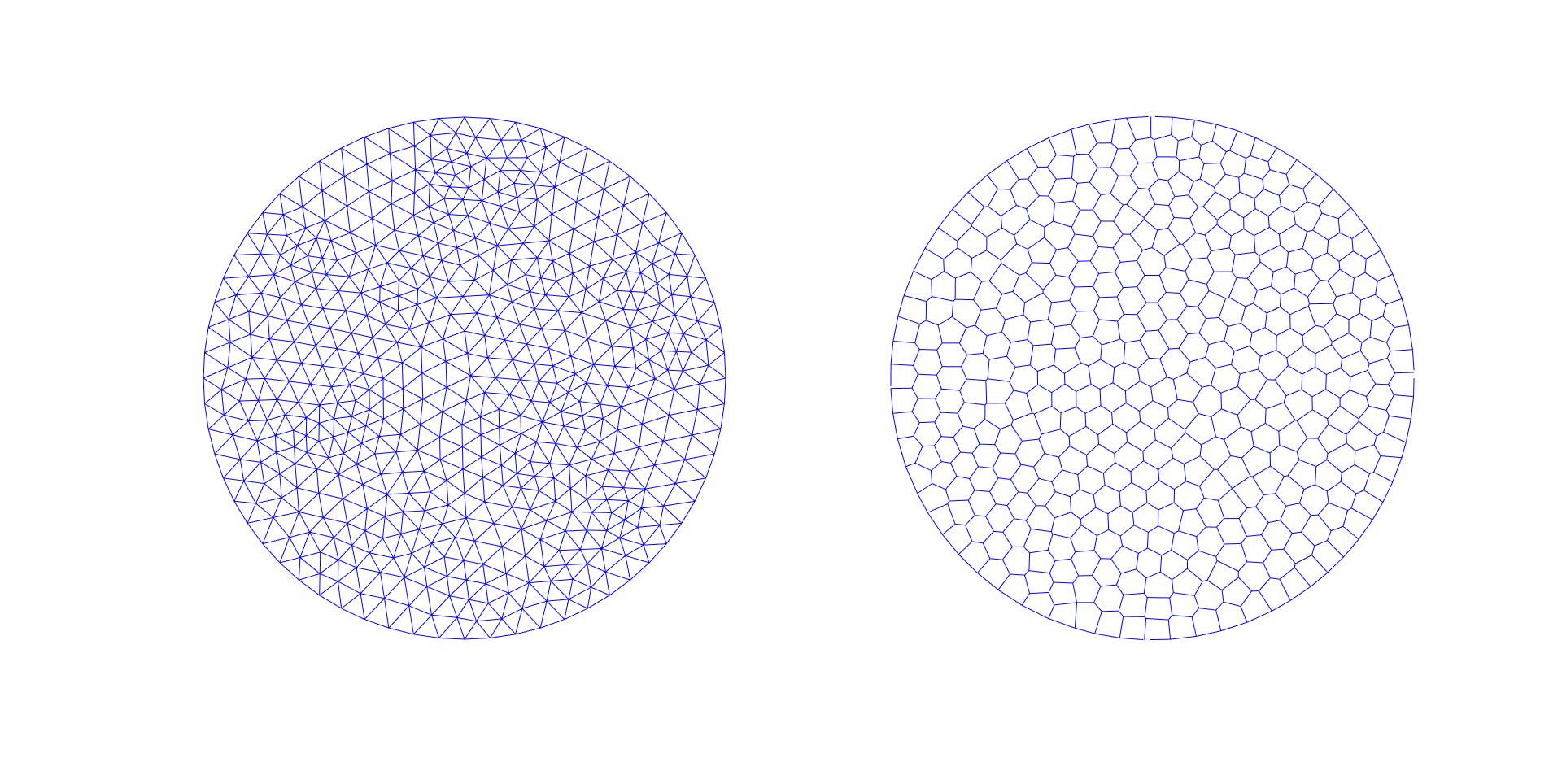} 
\caption{Example of polygonal meshes: $\mathcal{T}_{1/10}$,  $\mathcal{V}_{1/10}$.}
\label{meshd}
\end{figure}
For the generation of the Voronoi meshes we use the code Polymesher \cite{TPPM12}. 

\begin{remark}
As a comparison, we make use also of the classical Q2-P1 and P2-P1 mixed finite elements, see for instance \cite{BoffiBrezziFortin}. The Q2-P1 (Crousiex-Raviart) is a quadrilateral element with bi-quadratic velocities and $\Pk_1$ discontinuous pressures. The P2-P1 (Taylor-Hood) is a triangular element  with $\Pk_2$ velocities and $\Pk_1$ continuous pressures. Both are inf-sup stable elements, widely used in the literature and yielding a quadratic convergence rate in the natural norms of the problem.
\end{remark}

% -----------------------------------------------------------------------

\begin{test} [\textbf{Hydrostatic fluids}]
\label{test1}
In this test we consider the linear Stokes equation on the domain $\Omega_{\rm Q}$ with external load $\ff = \nabla \, p$ that exactly balances the gradient of the pressure, yielding a hydrostatic situation, i.e. $\uu = \mathbf{0}$. 
We set the viscosity $\nu = 1$ and we consider two possible pressures
\[
p_1(x, y) = x^3 - y^3 \qquad \text{and} \qquad p_2(x, y) = \sin(2 \pi x)\sin(2 \pi y).
\]
It is well known that the velocity error between the exact velocity $\uu$ and the discrete velocity $\uu_h$ of standard mixed elements like the Q2-P1 element for the incompressible Stokes equations is pressure-dependent, i.e. has the form
\begin{equation}
\label{eq:errorfem}
\|\uu - \uu_h\|_{\VV} \leq C_1 \, \inf_{\vv_h \VV_h} \|\uu - \vv_h\|_{\VV} + C_2 \, \inf_{q_h \in Q_h} \|p - q_h\|_{Q}
\end{equation}
where $C_1, C_2$ are two positive uniform constants, whereas for the virtual element scheme (see Theorem \ref{thm:u}  and \cite{Stokes:divfree}) the error on the velocity does not depend by the pressure, i.e.
\begin{equation}
\label{eq:errorvem}
\|\uu - \uu_h\|_{\VV} \leq C_1 \, \inf_{\vv_h \VV_h} \|\uu - \vv_h\|_{\VV} + C_2 \, h^{k+2} |\ff|_{k+1}.
\end{equation}
We observe that for both VEM and Q2-P1, the pressures $p_1$ and $p_2$ do not belong to the discrete pressure  space. Therefore we expect that the discrete Q2-P1 velocities are polluted by the pressure approximation. 
Table \ref{tab1} shows the results obtained respectively  with VEM and Q2-P1 for the case of polynomial pressure $p_1$ and sequence of meshes $\mathcal{Q}_h$. 
We observe that the virtual element method yields an exact hydrostatic velocity solution, since $\ff$ is a polynomial of degree two, while the Q2-P1 finite element method, in accordance with the a priori estimate \eqref{eq:errorfem}, shows non-negligible errors in the velocity. 

\begin{table}[!h]
\centering
\begin{tabular}{ll*{3}{c}}
\toprule
&                         $h$                      & ${\rm error}(\mathbf{u}, H^1)$   & ${\rm error}(\mathbf{u}, L^2)$       & ${\rm error}(p, L^2)$        \\
\midrule
\multirow{4}*{VEM}                             
&$1/10$       &$7.157458e-16$                       &$2.565404e-17$                             &$2.117754e-03$  \\
&$1/20$       &$1.524395e-15$                       &$2.597817e-17$                             &$5.489919e-04$  \\
&$1/40$       &$1.610876e-15$                       &$1.589614e-17$                             &$1.377769e-04$   \\
&$1/80$       &$9.630624e-15$                       &$4.590908e-17$                             &$3.465069e-05$  \\
\midrule
\multirow{4}*{Q2-P1}                        
&$1/10$       &$5.328708e-04$                       &$9.142870e-06$                             &$8.202921e-03$  \\
&$1/20$       &$1.486154e-04$                       &$1.278884e-06$                             &$2.623095e-03$  \\
&$1/40$       &$4.105136e-05$                       &$1.737273e-07$                             &$3.433991e-04$   \\
&$1/80$       &$1.006121e-05$                       &$2.164782e-08$                             &$8.511695e-05$  \\
\bottomrule
\end{tabular}
\caption{Test \ref{test1}:  Errors with VEM and Q2-P1 for polynomial pressure $p_1$ and meshes $\mathcal{Q}_h$.}
\label{tab1}
\end{table}
On the other hand we note that, due to the load approximation procedure, there is a load dependent term in the right hand side of \eqref{eq:errorvem}. As a consequence, in the test with goniometric pressure $p_2$ (where the load ${\bf f}$ is not a polynomial) we expect a slight pollution of the velocity errors also for the VEM scheme, although much smaller than for the FEM case.
In Figure \ref{fig:test1} we plot the errors for the goniometric pressure $p_2$ and the same sequence of meshes $\mathcal{Q}_h$. In accordance with the a-priori estimates \eqref{eq:errorfem}, \eqref{eq:errorvem} and the above observation, we obtain quadratic convergence rate for the Q2-P1 finite element method, and fourth order convergence rate for the VEM scheme for the $H^1$-velocity  (quadratic for the $L^2$-pressure errors).

\begin{figure}[!h]
\centering
\includegraphics[scale=0.25]{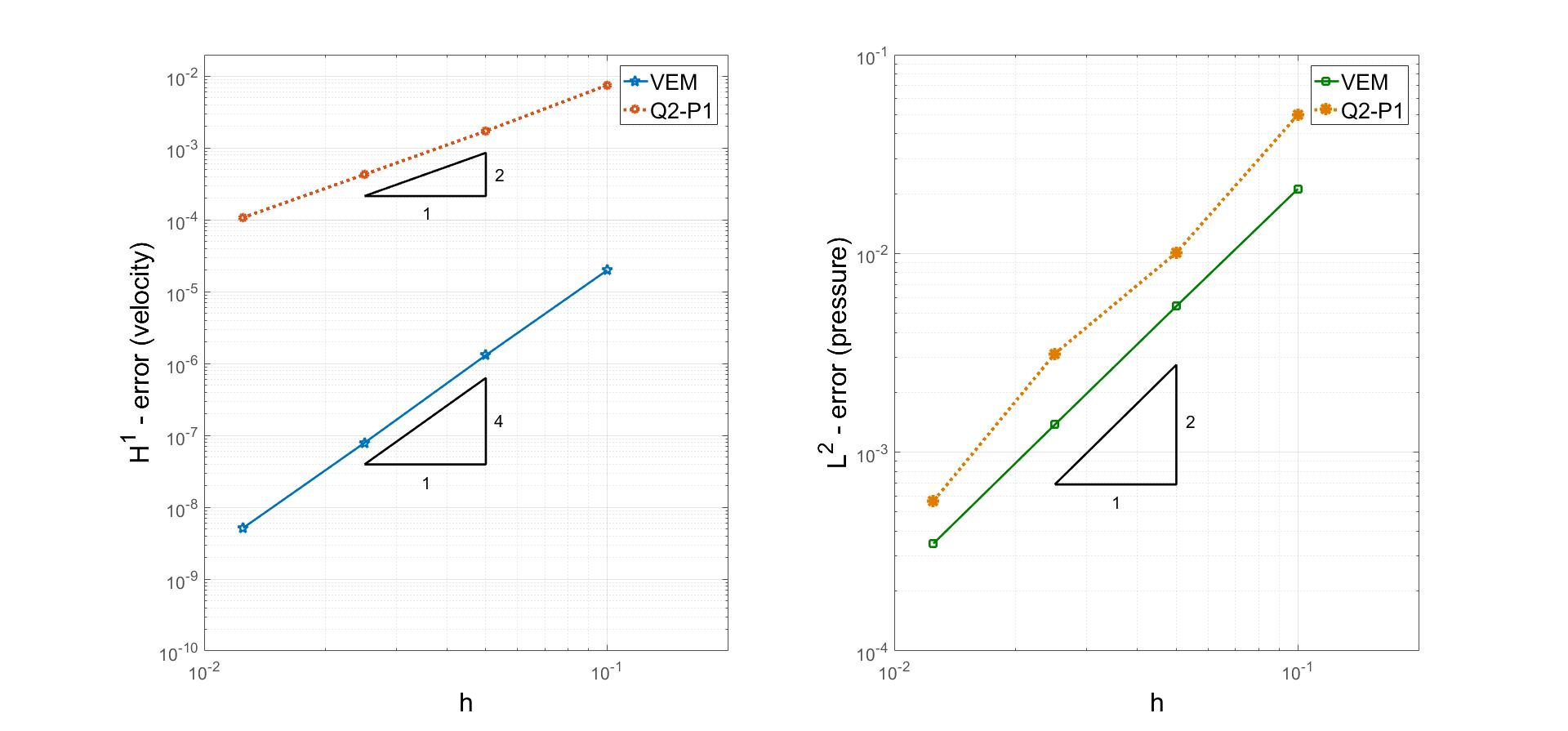} 
\caption{Test \ref{test1}:  Errors with VEM and Q2-P1 for the  meshes $\mathcal{Q}_h$.}
\label{fig:test1}
\end{figure}

%\begin{figure}[!h]
%\centering
%\includegraphics[scale=0.25]{immagini/test1_1.eps} \\
%\includegraphics[scale=0.25]{immagini/test1_2.eps}
%\caption{Test \ref{test1}:  Errors with VEM and Q2-P1 for the  meshes $\mathcal{Q}_h$.}
%\label{fig:test1}
%\end{figure}

\end{test}

\begin{test} [\textbf{Vanishing external load}]
\label{test2}
In this test we consider two benchmark Navier-Stokes problems taken from \cite{benchmark} on the disk $\Omega_{\rm D}$   where we compare the results obtained with VEM discretization with those obtained with the standard P2-P1 element for the sequence of meshes $\mathcal{T}_h$. The solutions are chosen in such a way that the pressures balance the nonlinear convective term yielding a vanishing external load $\ff = \mathbf{0}$.

In the first example we take $\nu = 1$ and the exact solution
\[
\uu_1(x, y) = \begin{pmatrix}-y \\  x \end{pmatrix} \qquad p_1(x, y) = -\frac{x^2 + y^2}{2} + \frac{1}{4}
\]  
We notice that the velocity $\uu_1$ belongs to the discrete space for both VEM and P2-P1 schemes.
In Table \ref{tab2} we show the results obtained with the P2-P1 element and the VEM discretization, in which we use respectively the trilinear form $c_h(\cdot; \, \cdot, \cdot)$ of \eqref{eq:c_h}, labelled as ${\rm VEM}_{\rm non-skew}$, and the skew-symmetric form $\widetilde{c}_h(\cdot; \, \cdot, \cdot)$ of \eqref{eq:ctilde_h}, labelled as ${\rm VEM}_{\rm skew}$ (cf. Remark \ref{rem:non-skew}).

\begin{table}[!h]
\centering
\begin{tabular}{ll*{3}{c}}
\toprule
&                         $h$                      & ${\rm error}(\mathbf{u}, H^1)$   & ${\rm error}(\mathbf{u}, L^{\infty})$       & ${\rm error}(p, L^2)$        \\
\midrule
\multirow{4}*{${\rm VEM}_{\rm non-skew}$}                             
&$ 1/5$       &$3.409332e-13$                       &$2.564615e-14$                             &$3.379691e-03$  \\
&$1/10$       &$8.055803e-13$                       &$3.158584e-14$                             &$8.512726e-04$  \\
&$1/20$       &$1.769002e-12$                       &$6.561418e-14$                             &$2.135981e-04$   \\
&$1/40$       &$4.080531e-12$                       &$8.147236e-14$                             &$5.352940e-05$  \\
\midrule
\multirow{4}*{${\rm VEM}_{\rm skew}$}                             
&$ 1/5$       &$5.738500e-05$                       &$3.252807e-06$                             &$3.379691e-03$  \\
&$1/10$       &$1.510897e-05$                       &$5.101225e-07$                             &$8.512726e-04$  \\
&$1/20$       &$3.438742e-06$                       &$7.243032e-08$                             &$2.135981e-04$   \\
&$1/40$       &$6.894319e-07$                       &$8.940261e-09$                             &$5.352940e-05$  \\    
\midrule
\multirow{4}*{P2-P1}                        
&$ 1/5$       &$4.658371e-04$                       &$4.031058e-05$                             &$3.416287e-03$  \\
&$1/10$       &$1.470468e-04$                       &$6.057622e-06$                             &$8.666102e-04$  \\
&$1/20$       &$2.760305e-05$                       &$7.740773e-07$                             &$2.160190e-04$   \\
&$1/40$       & fail to converge                                    &   fail to converge                          & fail to converge \\    
\bottomrule
\end{tabular}
\caption{Test \ref{test2}:  Errors with VEM and P2-P1 with solution $(\uu_1, p_1)$ and meshes $\mathcal{T}_h$.}
\label{tab2}
\end{table}
We observe that the ${\rm VEM}_{\rm non-skew}$ yields an exact solution $\uu_h = \uu_1$. Indeed, in this simple case it holds 
\[
c_h(\uu_1; \, \uu_1, \vv_h) = c(\uu_1; \, \uu_1, \vv_h) \qquad \text{for all $\vv_h \in \VV_h$.}
\]
This property is not verified by the skew-symmetric trilinear form $\widetilde{c}_h(\cdot; \, \cdot, \cdot)$. The P2-P1 does not yield the exact velocity solution since the velocity error of the method is polluted by the approximation of the pressure.

In the second example we set $\nu = 1$ and we consider the exact solution
\[
\uu_2(x, y) = 3\, \begin{pmatrix} x^2 - y^2 \\  -2 x y \end{pmatrix} \qquad p_2(x, y) = 9
\frac{(x^2 + y^2)^2}{2} - \frac{3}{2}
\]  
In Figure \ref{fig:test2tri} we show the results. 
\begin{figure}[!h]
\centering
\includegraphics[scale=0.25]{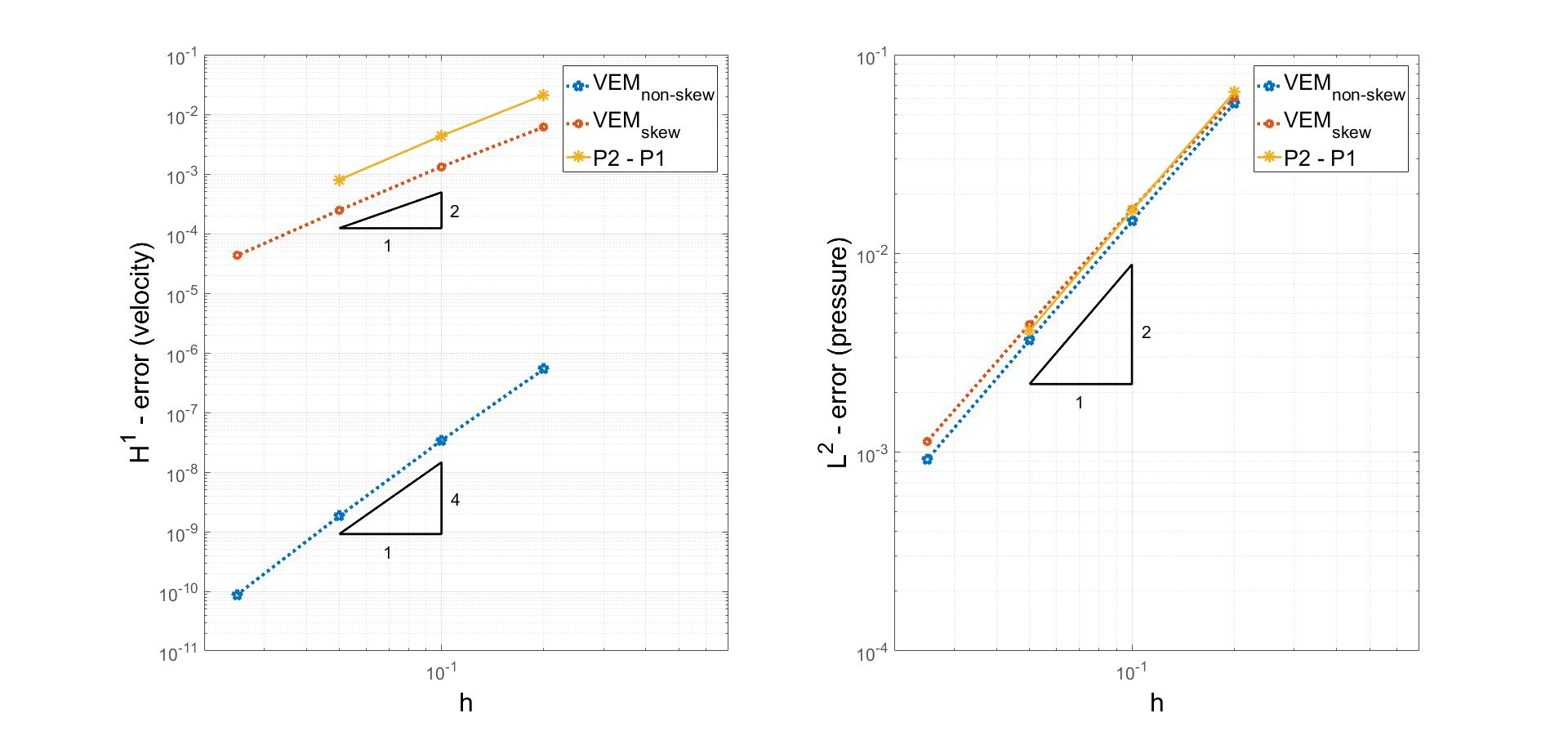} 
\caption{Test \ref{test2}:  Errors with VEM and P2-P1 with solution $(\uu_2, p_2)$ and meshes $\mathcal{T}_h$.}
\label{fig:test2tri}
\end{figure}

%\begin{figure}[!h]
%\centering
%\includegraphics[scale=0.25]{immagini/test2_tri1.eps} \\
%\includegraphics[scale=0.25]{immagini/test2_tri2.eps} 
%\caption{Test \ref{test2}:  Errors with VEM and P2-P1 with solution $(\uu_2, p_2)$ and meshes $\mathcal{T}_h$.}
%\label{fig:test2tri}
%\end{figure}
%
We are in a similar situation to the previous example (the velocity $\uu_2$ belongs to the discrete spaces, whereas the pressure $p_2$ does not) but with an important difference. 
Also in this case we observe that  ${\rm VEM}_{\rm non-skew}$ provides a better performance than ${\rm VEM}_{\rm skew}$, but now $\uu_h \neq \uu_2$. Indeed, in this case it holds
\[
c_h(\uu_2; \, \uu_2, \vv_h) - c(\uu_2; \, \uu_2, \vv_h) \leq C \, h^{k+2} |(\boldsymbol{\nabla}\uu )\, \uu|_{k+1} || \vv_h ||_{\VV} \qquad \text{for all $\vv_h \in \VV_h$}
\]
and using similar steps as in the proof of Theorem \ref{thm:u}, for  ${\rm VEM}_{\rm non-skew}$ we can derive
\[
\|\uu - \uu_h \|_{\VV} \leq C \, h^{k+2} \, \|(\boldsymbol{\nabla}\uu) \, \uu\|_{k+1} .
\]
Instead, for ${\rm VEM}_{\rm skew}$ we can only obtain
\[
\|\uu - \uu_h \|_{\VV} \leq C \, h^{k} \, |\uu \cdot \uu|_k.
\]
Finally, figure \ref{fig:test2vor} displays the results obtained with ${\rm VEM}_{\rm non-skew}$ and ${\rm VEM}_{\rm skew}$ for the sequence of polygonal meshes $\mathcal{V}_h$ (see Figure \ref{meshd}).
\begin{figure}[!h]
\centering
\includegraphics[scale=0.25]{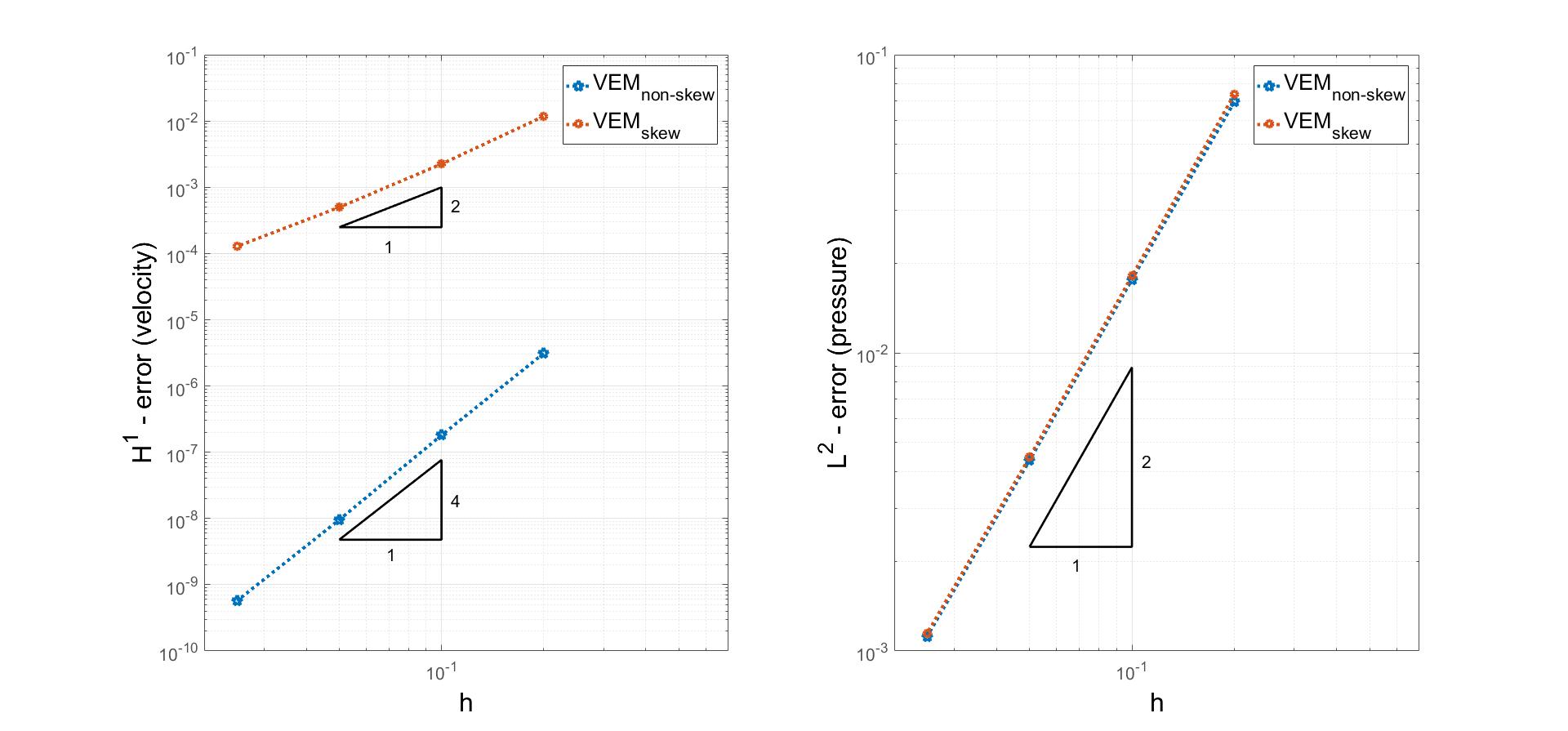} 
\caption{Test \ref{test2}:  Errors with VEM and P2-P1 with solution $(\uu_2, p_2)$ and meshes $\mathcal{V}_h$.}
\label{fig:test2vor}
\end{figure}

%\begin{figure}[!h]
%\centering
%\includegraphics[scale=0.25]{immagini/test2_vor1.eps} \\
%\includegraphics[scale=0.25]{immagini/test2_vor2.eps} 
%\caption{Test \ref{test2}:  Errors with VEM and P2-P1 with solution $(\uu_2, p_2)$ and meshes $\mathcal{V}_h$.}
%\label{fig:test2vor}
%\end{figure}
\end{test}

\begin{test}
\label{test3}
In this example we test the Navier-Stokes equation on the domain $\Omega_{\rm Q}$ with different values of the fluid viscosity $\nu$.  We choose the load term $\ff$ in such a way that the analytical solution is  
\[
\mathbf{u}(x,y) =  0.1 \, \begin{pmatrix}
x^2 (1 - x)^2 \, (2 y - 6 y^2 + 4 y^3)\\
- y^2 (1 - y)^2 \, (2 x - 6 x^2 + 4 x^3)
\end{pmatrix} 
\qquad 
p(x,y) = x^3 \, y^3 - \frac{1}{16}.
\]
The aim of this test is to check the actual performance of the virtual element method for small viscosity parameters, in comparison with the standard P2-P1 mixed finite element method.  
Figure \ref{fig:test3} shows that the solutions of the virtual element method are accurate even for rather
small values of $\nu$. Larger velocity errors appear only for very small viscosity parameters. The reason for this robustness is again that the ``divergence free'' property of VEM yields velocity errors that do not depend directly on the pressure (but only indirectly through the higher order load approximation term, see Theorem \ref{thm:u}). On the contrary, for the P2-P1 element the pressure component of the error can become the dominant source of error also for the velocity field. In addition, we note that for $\nu=10^{-4}, 10^{-5}$ the P2-P1 element does not even converge. 

\begin{figure}[!h]
\centering
\includegraphics[scale=0.25]{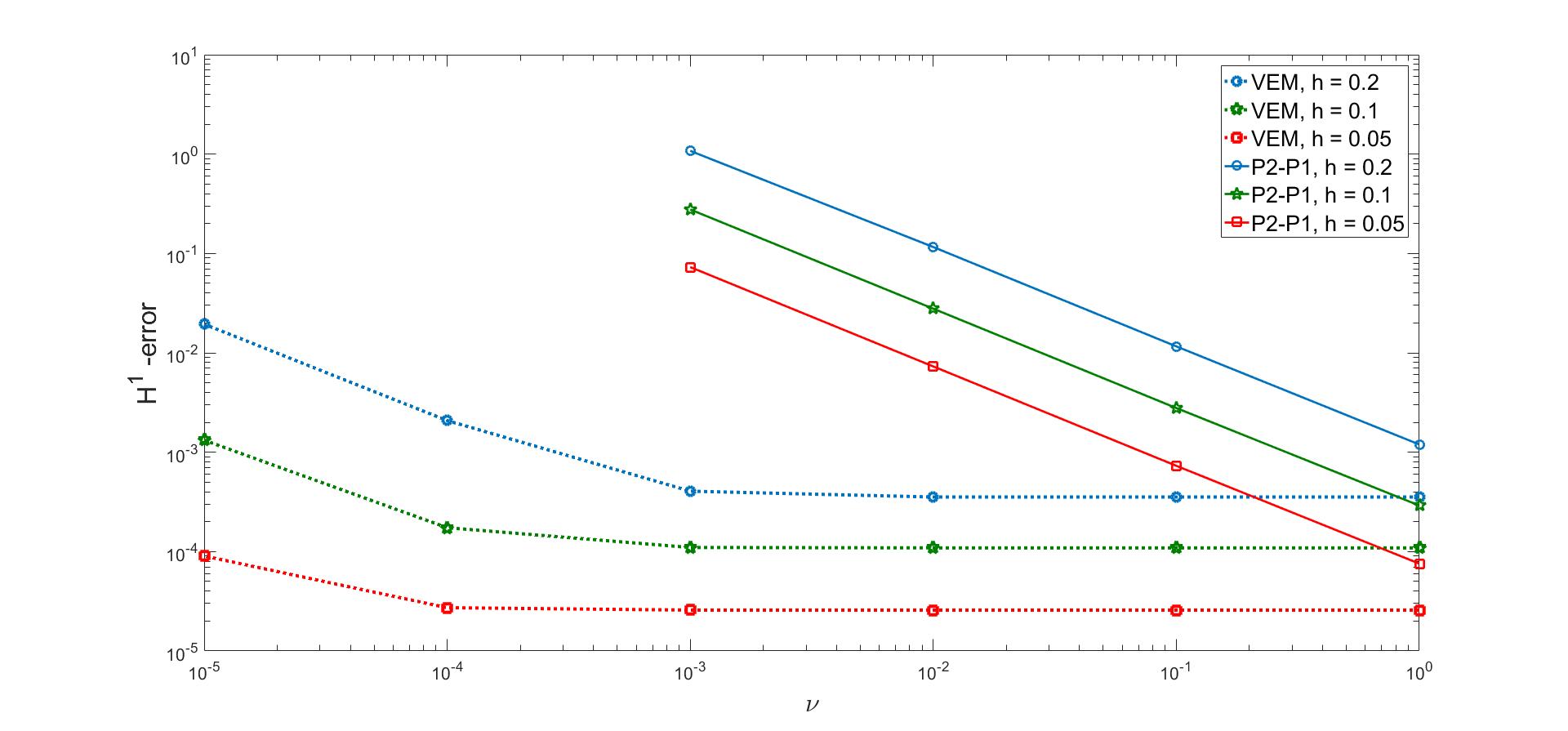} 
\caption{Test \ref{test3}:  Errors of VEM (dotted lines) and P2-P1 (solid lines), with different values of $\nu$ for the  meshes $\mathcal{T}_h$.}
\label{fig:test3}
\end{figure}
\end{test}

\begin{test}
\label{test4}
In this test we solve the Navier-Stokes equation on the square domain $\Omega_{\rm Q}$ with  viscosity $\nu = 0.1$ and with
the load term $\mathbf{f}$ chosen such that the analytical solution is 
\[
\mathbf{u}(x,y) = \frac{1}{2} \, \begin{pmatrix}
\sin(2 \pi x)^2 \, \sin(2 \pi y) \, \cos(2 \pi y) \\
- \sin(2 \pi y)^2 \, \sin(2 \pi x) \, \cos(2 \pi x)
\end{pmatrix} \qquad 
p(x,y) = \pi^2 \, \sin(2 \pi x) \, \cos(2 \pi y).
\]
In Figure \ref{fig:test4} we show the results obtained for the sequence of triangular meshes $\mathcal{T}_h$, also compared with the P2-P1 element.
\begin{figure}[!h]
\centering
\includegraphics[scale=0.25]{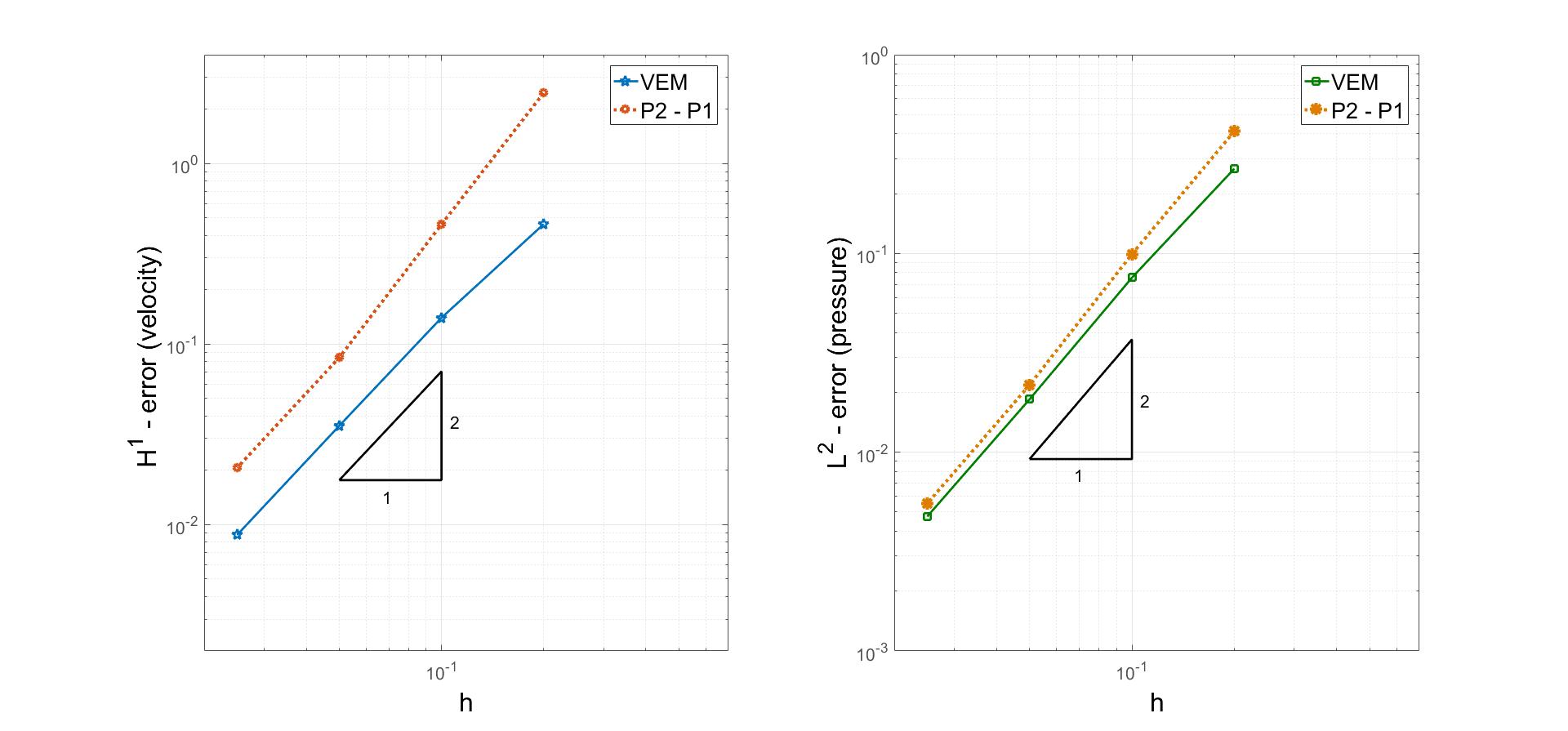}
\caption{Test \ref{test4}:  Errors with VEM and P2-P1 for the  meshes $\mathcal{T}_h$.}
\label{fig:test4}
\end{figure}

%\begin{figure}[!h]
%\centering
%\includegraphics[scale=0.25]{immagini/test4_tri1.eps} \\
%\includegraphics[scale=0.25]{immagini/test4_tri2.eps}
%\caption{Test \ref{test4}:  Errors with VEM and P2-P1 for the  meshes $\mathcal{T}_h$.}
%\label{fig:test4}
%\end{figure}
%
We notice that the theoretical predictions of Sections \ref{sec:4} are confirmed. Moreover, we observe that the virtual element method exhibit smaller errors than the standard P2-P1 method, at least for this example and with the adopted meshes.
Finally we test the virtual element method with the sequence of polygonal meshes $\mathcal{W}_h$, obtaining that the theoretical results are confirmed as well (note that the $N_{dof}$ behaves like $h^{-2}$).

\begin{figure}[!h]
\centering
\includegraphics[scale=0.25]{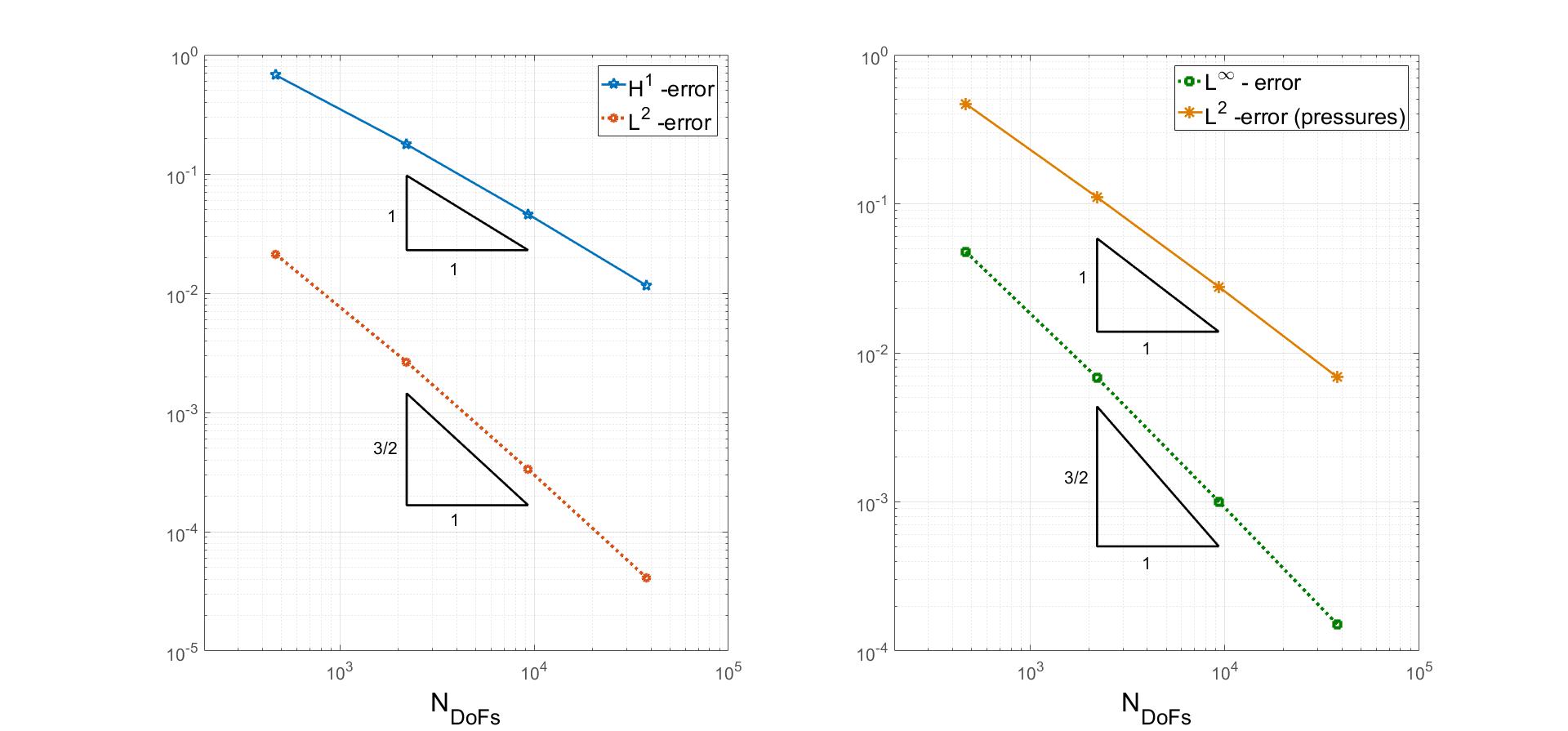} 
\caption{Test \ref{test4}:  Errors with VEM  for the  meshes $\mathcal{W}_h$.}
\label{fig:test4_esa}
\end{figure}
\end{test}

\begin{test}
\label{test5}
In this experiment we analyse the Stokes equation on the square domain $\Omega_{\rm Q}$ where the viscosity $\nu = 1$ and
the load term $\mathbf{f}$ is chosen such that the analytical solution is 
\[
\mathbf{u}(x,y) = \frac{1}{2} \, \begin{pmatrix}
\sin(2 \pi x)^2 \, \sin(2 \pi y) \, \cos(2 \pi y) \\
- \sin(2 \pi y)^2 \, \sin(2 \pi x) \, \cos(2 \pi x)
\end{pmatrix} \qquad 
p(x,y) =  \sin(2 \pi x) \, \cos(2 \pi y).
\] 
The aim of this test is the assessment of the VEM robustness with respect to the mesh deformation, performing also a comparison with the Q2-P1 mixed finite element method.
In Figures \ref{fig:test5_storti} and \ref{fig:test5_ultra_storti} we plot the obtained  results.
\begin{figure}[!h]
\centering
\includegraphics[scale=0.25]{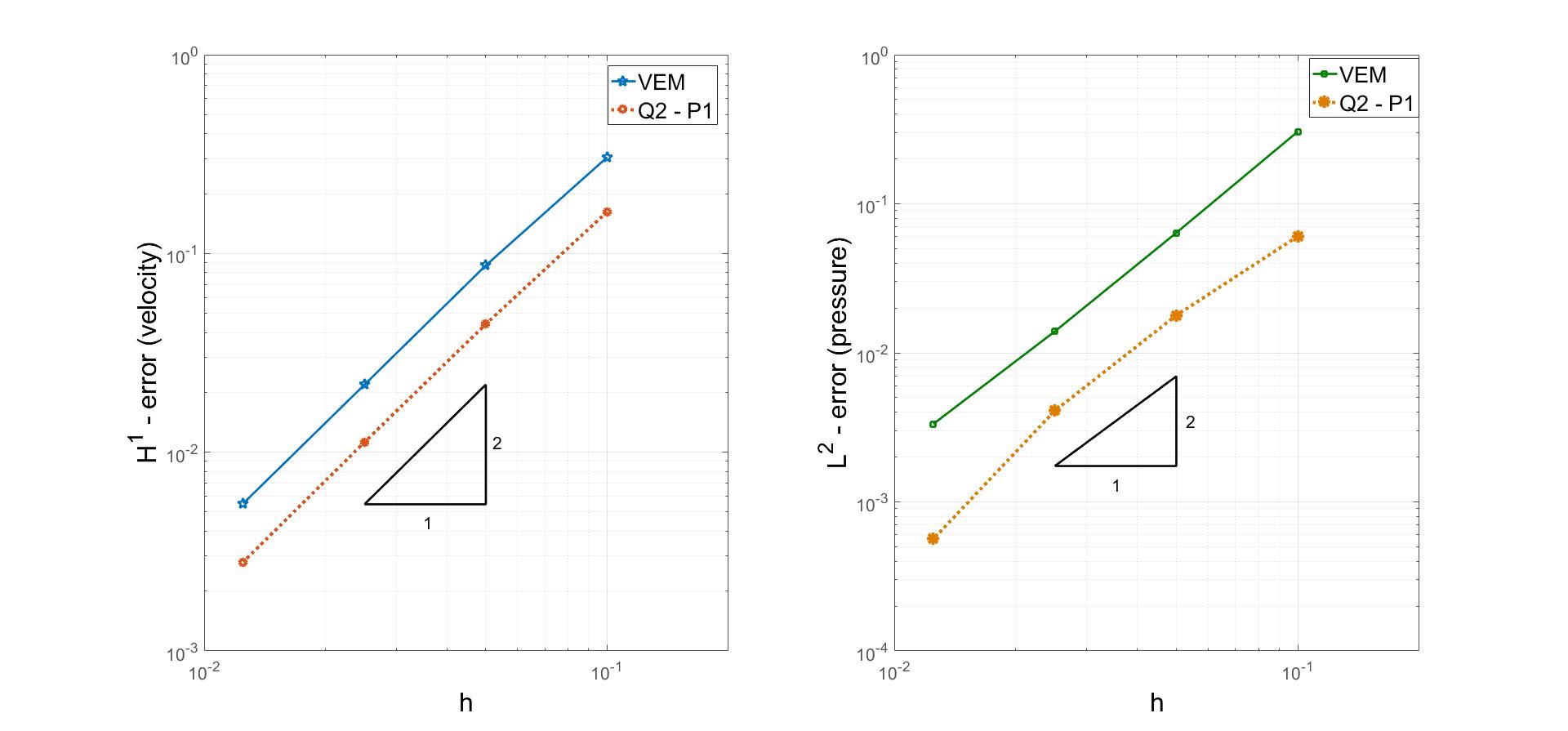} 
\caption{Test \ref{test5}:  Errors with VEM and Q2-P1 for the  meshes $\mathcal{Q}_h$.}
\label{fig:test5_storti}
\end{figure}

\begin{figure}[!h]
\centering
\includegraphics[scale=0.25]{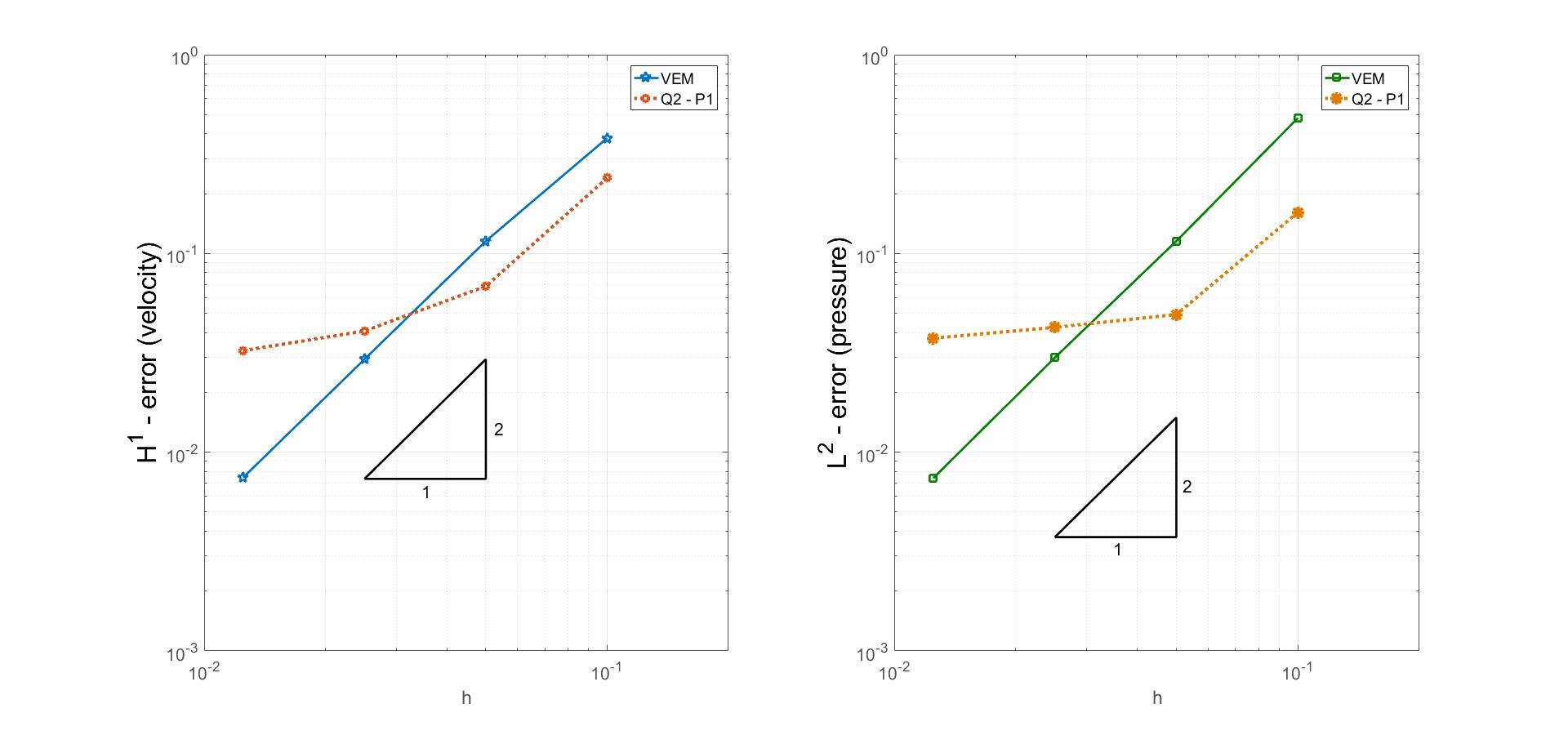} 
\caption{Test \ref{test5}:  Errors with VEM and Q2-P1 for the  meshes $\mathcal{U}_h$.}
\label{fig:test5_ultra_storti}
\end{figure}

%\begin{figure}[!h]
%\centering
%\includegraphics[scale=0.25]{immagini/test5_storti1.eps} \\
%\includegraphics[scale=0.25]{immagini/test5_storti2.eps}
%\caption{Test \ref{test5}:  Errors with VEM and Q2-P1 for the  meshes $\mathcal{Q}_h$.}
%\label{fig:test5_storti}
%\end{figure}

%\begin{figure}[!h]
%\centering
%\includegraphics[scale=0.25]{immagini/test5_ultra1.eps} \\
%\includegraphics[scale=0.25]{immagini/test5_ultra2.eps}
%\caption{Test \ref{test5}:  Errors with VEM and Q2-P1 for the  meshes $\mathcal{U}_h$.}
%\label{fig:test5_ultra_storti}
%\end{figure}

\end{test}

We observe that for the (less deformed) quadrilateral meshes $\mathcal{Q}_h$, both the virtual element method and the Q2-P1 preserve the theoretical order of accuracy, but the Q2-P1 element yields better results. 
Instead, for the  (more deformed) sequence of meshes $\mathcal{U}_h$, the behaviour is completely different. The virtual element approach maintains the optimal second order accuracy, whereas the Q2-P1 element clearly suffers from an evident sub-optimality of the convergence rates (the pressure does not even seem to converge). Therefore, we may conclude that the VEM seems to be more robust with respect to large distortions of the mesh.

%In general, while the Q2-P1 element enjoys
% the enhanced accuracy of using bi-quadratic
% velocities, the VEM seems to be more robust
% to large distortions of the mesh.

\begin{test}
\label{test6}
This test highlights that, following \cite{Stokes:divfree,preprintdarcy}, the proposed virtual elements can accommodate both the Stokes (or Navier-Stokes) and the Darcy problems simultaneously (see Remark \ref{rem:Sto-Dar}). 
Accordingly, we consider the approximation of a flow in the square $[0, 2]^2$,
consisting of a porous region $\Omega_D := \Omega_{D1} \cup \Omega_{D2}$, where the flow is a Darcy flow, and an open region $\Omega_S = \Omega \setminus \Omega_D$,
where the flow is governed by the linear Stokes system (see Figure \ref{fig:test6_problem} for a depiction of the problem configuration).
This leads to consider the problem: find $(\uu,p) \in [H^1(\Omega)]^2 \times L^2(\Omega)$ such that 

\begin{equation}
\label{eq:coupled}
\left\{
\begin{aligned}
& -  2 \nu \, \dd (\epsilon (\uu))  -  \nabla p = \mathbf{0} \quad  & &\text{in $\Omega_S$,} \\
& \dd \, \uu = 0 \quad & &\text{in $\Omega_S$,} \\
& \uu_1 = \phi,  \quad & &\text{on $\{0\} \times [0, 2]$,} \\
& \uu_2 = 0  \quad & &\text{on $[0, 1] \times \{0, 2\}$,} \\
\end{aligned}
\right.
\qquad 
\left\{
\begin{aligned}
&  \nu \, \lambda \uu -  \nabla p =  \mathbf{0} \quad  & &\text{in $\Omega_D$,} \\
& \dd \, \uu = 0 \quad & &\text{in $\Omega_D$,} \\
& \uu_2 = 0  \quad & &\text{on $[1, 2] \times \{0, 2\}$,} \\
\end{aligned}
\right.
\end{equation}
where $\epsilon(\uu) := \frac{1}{2}({\Gr} \uu + {\Gr} \uu^T)$ denotes the symmetric gradient operator. We fix $\nu =1$, $\lambda= 10$ on $\Omega_{D1}$, and $\lambda= 2$ on $\Omega_{D2}$. Furthermore, we take
\[
\phi(0,y) = \max \{0, -10(1-y)(2-y)\}.
\]
At the interface between Stokes and Darcy regions, the system \eqref{eq:coupled} is coupled using the Beavers-Joseph-Saffmann condition (see \cite{beavers1967boundary,saffman1971boundary} for further details).

We observe that in our test problem, we set free boundary conditions on the right boundary edge of the Darcy region. 
To test the performance of the virtual element method, we compute the unknown fluxes quantity (see Figure \ref{fig:test6_problem})
\[
f_{R1} := \int_{\partial \Omega \cap \partial \Omega_{D1}} \uu \cdot \mathbf{n} \, {\rm d}s \qquad \text{and} \qquad
f_{R2} := \int_{\partial \Omega \cap \partial \Omega_{D2}} \uu \cdot \mathbf{n} \, {\rm d}s
\]
taking into account that $f_L + f_{R1} + f_{R2} = 0$ and 
\[
f_{L} := \int_{\partial \Omega \cap \partial \Omega_{S}} \uu \cdot \mathbf{n} \, {\rm d}s = \frac{10}{6} ,
\]
with $\mathbf{n}$ denoting the outward normal vector.
 
\begin{figure}[!h]
\centering
\includegraphics[scale=0.6]{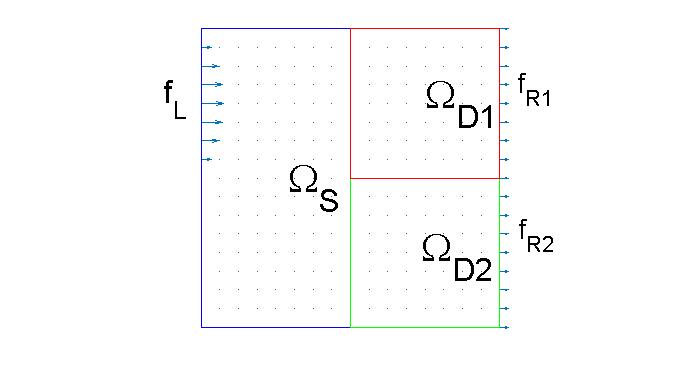} 
\caption{Test \ref{test6}:  The domain configuration of Problem \eqref{eq:coupled}.}
\label{fig:test6_problem}
\end{figure}

In this experiment the computational domain $\Omega:= [0,2]^2$  is partitioned using two sequences of polygonal meshes:
\begin{itemize}
\item $\{ \mathcal{Q}_{h} \}_h$: sequence of square meshes with element edge length $h=1/4, 1/8, 1/16, 1/32$ ;
\item $\{ \mathcal{P}_h\}_h$: sequence of meshes obtained by gluing a Voronoi decomposition on the domain $\Omega_S$, a triangular decomposition on $\Omega_{D1}$ and a square decomposition on $\Omega_{D2}$, with edge length $h=1/4, 1/8, 1/16, 1/32$.
\end{itemize}
In addition, we use the square mesh $\mathcal{Q}_{1/64}$ as the basis for the reference solution. 
An example of the adopted meshes is shown in Figure \ref{fig:test6_mesh}. In Figure \ref{fig:test6_velocity} 
% and \ref{fig:test6_pressure} 
we show the plot of the numerical velocity and pressure. Note that the purpose of mesh family $\{ \mathcal{P}_h\}_h$ is to show the robustness of the proposed method when, by exploiting the flexibiliy of polygonal grids, completely independent meshes are glued together.
\begin{figure}[!h]
\centering
\includegraphics[scale=0.8]{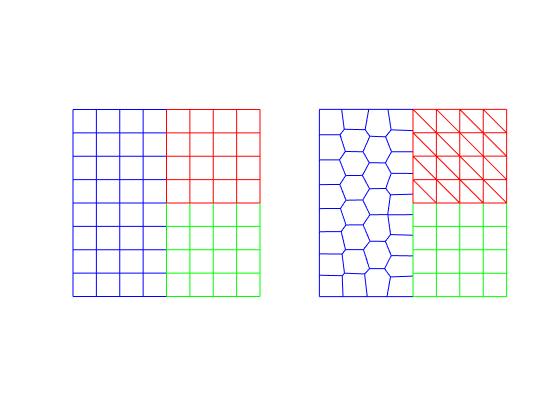}
\vspace{-2.0cm}
\caption{Test \ref{test6}:  Example of the adopted polygonal meshes: $\mathcal{Q}_{1/4}$ (left), $\mathcal{P}_{1/4}$ (right).}
\label{fig:test6_mesh}
\end{figure}
\begin{figure}[!h]
\centering
\includegraphics[scale=0.28]{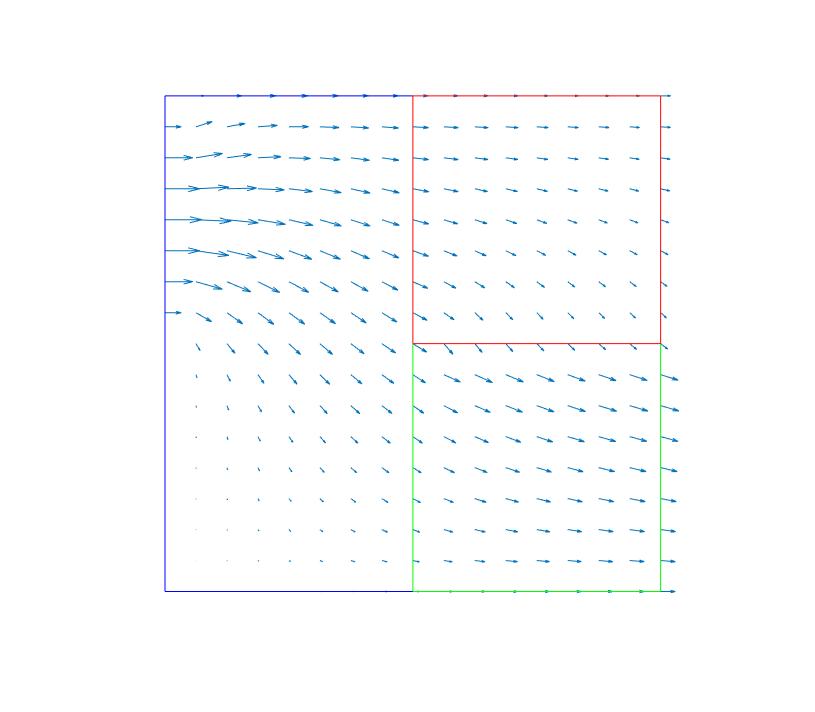} 
\includegraphics[scale=0.23]{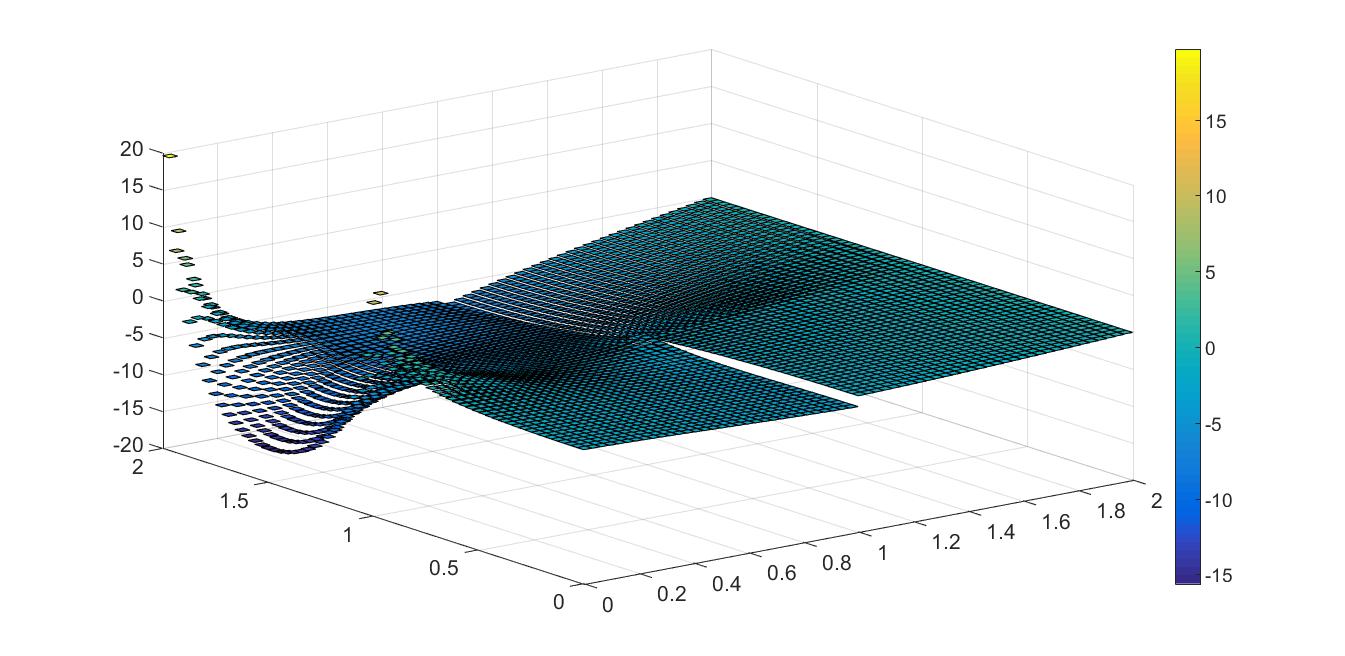} 
\vspace{-0.5cm}
\caption{Test \ref{test6}:  Velocity and pressure respectively for the mesh $\mathcal{Q}_{1/8}$ and $\mathcal{Q}_{1/32}$.}
\label{fig:test6_velocity}
\end{figure}
%%
%%
%\begin{figure}[!h]
%\centering
%\includegraphics[scale=0.3]{immagini/pressione.eps} 
%\caption{Test \ref{test6}: Pressure for the mesh $\mathcal{Q}_{1/32}$.}
%\label{fig:test6_pressure}
%\end{figure}
%%
In Table \ref{tab6} we show the results obtained by using the sequences of meshes $\mathcal{Q}_h$ and $\mathcal{P}_h$, compared with those obtained with the reference mesh.
We observe that both sequences of meshes exhibit appropriate convergence properties, confirming that the proposed virtual element method 
can automatically handle non-conforming polygonal meshes and the coupling between Darcy and Stokes flow problems.

\begin{table}[!h]
\centering
\begin{tabular}{ll*{3}{c}}
\toprule
&                         $h$                      & $f_{R1}$   & $f_{R2}$       & $f_{L} + f_{R1} + f_{R2}$        \\
\midrule
\multirow{1}*{reference mesh}
&$1/64$      &$5.373938e-01$                       &$1.129272e+00$                               &-$1.332267e-15$  \\
\midrule
\multirow{4}*{$\mathcal{Q}_h$}                             
&$1/4$       &$5.215469e-01$                       &$1.145119e+00$                             &$0$  \\
&$1/8$       &$5.284186e-01$                       &$1.138248e+00$                             &$4.440892e-16$  \\
&$1/16$      &$5.339269e-01$                       &$1.132739e+00$                             &-$4.44089e-16$   \\
&$1/32$      &$5.367736e-01$                       &$1.129893e+00$                             &$6.661338e-16$  \\   
\midrule
\multirow{4}*{$\mathcal{P}_h$}                        
&$1/4$       &$5.161923e-01$                       &$1.158934e+00$                             &$4.440892e-16$  \\
&$1/8$       &$5.254995e-01$                       &$1.143962e+00$                             &-$4.44089e-16$  \\
&$1/16$       &$5.343364e-01$                      &$1.132836e+00$                             &-$2.22044e-16$   \\
&$1/32$       &$5.381031e-01$                      &$1.128622e+00$                             &$0$  \\               
\bottomrule
\end{tabular}
\caption{Test \ref{test6}:  Fluxes along the boundary for the sequences of meshes $\mathcal{Q}_h$ and $\mathcal{P}_h$.}
\label{tab6}
\end{table}

\end{test}

\section{Acknowledgements}
The authors L. Beir\~ao da Veiga and G. Vacca were partially supported by the European Research Council through
the H2020 Consolidator Grant (grant no. 681162) CAVE, Challenges and Advancements in Virtual Elements. This support is gratefully acknowledged.

\addcontentsline{toc}{section}{\refname}
\bibliographystyle{plain}
\bibliography{biblio}

\begin{thebibliography}{10}

\bibitem{Ahmed-et-al:2013}
B.~Ahmad, A.~Alsaedi, F.~Brezzi, L.~D. Marini, and A.~Russo.
\newblock Equivalent projectors for virtual element methods.
\newblock {\em Comput. Math. Appl.}, 66(3):376--391, 2013.

\bibitem{Andersen-geo}
O.~Andersen, H.M. Nilsen, and X.~Raynaud.
\newblock On the use of the {V}irtual {E}lement {M}ethod for geomechanics on
  reservoir grids.
\newblock {\em Preprint, arxiv:1606.09508}, 2016.

\bibitem{Antonietti-BeiraodaVeiga-Mora-Verani:20XX}
P.F. Antonietti, L.~Beir{\~a}o~da Veiga, D.~Mora, and M.~Verani.
\newblock A stream function formulation of the {S}tokes problem for the virtual
  element method.
\newblock {\em Siam. J. Numer. Anal.}, 52:386--404, 2014.

\bibitem{ABLS_part_I}
E.~Artioli, L.~Beir{\~a}o Da~Veiga, C.~Lovadina, and E.~Sacco.
\newblock Arbitrary order {2D} virtual elements for polygonal meshes: Part {I},
  elastic problem.
\newblock {\em Preprint, arxiv:1701.06670}, 2016.

\bibitem{ADLP-HR}
E.~Artioli, S.~{d}e Miranda, C.~Lovadina, and L.~Patruno.
\newblock A stress/displacement virtual element method for plane elasticity
  problems.
\newblock {\em Preprint, arxiv:1702.01702}, 2017.

\bibitem{nonconforming}
B.~Ayuso, K.~Lipnikov, and G.~Manzini.
\newblock The nonconforming virtual element method.
\newblock {\em ESAIM: M2AN}, 50(3):879--904, 2016.

\bibitem{beavers1967boundary}
G.~S Beavers and D.~D. Joseph.
\newblock Boundary conditions at a naturally permeable wall.
\newblock {\em J. of fluid mech.}, 30(1):197--207, 1967.

\bibitem{BeiraodaVeiga-Brezzi-Marini:2013}
L.~Beir\~ao~da Veiga, F.~Brezzi, and L.~D. Marini.
\newblock {V}irtual {E}lements for linear elasticity problems.
\newblock {\em Siam. J. Numer. Anal.}, 51:794--812, 2013.

\bibitem{supermisti}
L.~Beir\~ao~da Veiga, F.~Brezzi, L.~D. Marini, and A.~Russo.
\newblock {$H(\text{div})$} and {$H(\bold{curl})$}-conforming virtual element
  methods.
\newblock {\em Numer. Math.}, 133(2):303--332, 2016.

\bibitem{variable-primale}
L.~Beir\~ao~da Veiga, F.~Brezzi, L.~D. Marini, and A.~Russo.
\newblock {V}irtual element methods for general second order elliptic problems
  on polygonal meshes.
\newblock {\em Math. Models Methods Appl. Sci.}, 26(4):729--750, 2016.

\bibitem{volley}
L.~Beir{\~a}o~da Veiga, F.~Brezzi, A.~Cangiani, G.~Manzini, L.~D. Marini, and
  A.~Russo.
\newblock Basic principles of virtual element methods.
\newblock {\em Math. Models Methods Appl. Sci.}, 23(1):199--214, 2013.

\bibitem{VEM-hitchhikers}
L.~Beir{\~a}o~da Veiga, F.~Brezzi, L.~D. Marini, and A.~Russo.
\newblock The {H}itchhiker's {G}uide to the {V}irtual {E}lement {M}ethod.
\newblock {\em Math. Models Methods Appl. Sci.}, 24(8):1541--1573, 2014.

\bibitem{BeiraoLovaMora}
L.~Beir{\~a}o~da Veiga, C.~Lovadina, and D.~Mora.
\newblock A virtual element method for elastic and inelastic problems on
  polytope meshes.
\newblock {\em Comput. Methods Appl. Mech. Engrg.}, 295:327--346, 2015.

\bibitem{2016stability}
L.~Beir{\~a}o Da~Veiga, C.~Lovadina, and A.~Russo.
\newblock Stability analysis for the virtual element method.
\newblock {\em Peprint}, 2016.

\bibitem{Stokes:divfree}
L.~Beir{\~a}o~da Veiga, C.~Lovadina, and G.~Vacca.
\newblock Divergence free virtual elements for the {S}tokes problem on
  polygonal meshes.
\newblock {\em ESAIM Math. Model. Numer. Anal.}, 51(2):509--535, 2017.

\bibitem{Benedetto-VEM-2}
M.~F. Benedetto, S.~Berrone, A.~Borio, S.~Pieraccini, and S.~Scial{\`o}.
\newblock A hybrid mortar virtual element method for discrete fracture network
  simulations.
\newblock {\em J. Comput. Phys.}, 306:148--166, 2016.

\bibitem{Berrone-SUPG}
M.~F. Benedetto, S.~Berrone, A.~Borio, S.~Pieraccini, and S.~Scial{\`o}.
\newblock Order preserving {SUPG} stabilization for the virtual element
  formulation of advection--diffusion problems.
\newblock {\em Comput. Methods Appl. Mech. Engrg.}, 311:18--40, 2016.

\bibitem{Berrone-VEM}
M.F. Benedetto, S.~Berrone, S.~Pieraccini, and S.~Scial\`o.
\newblock The {V}irtual {E}lement {M}ethod for {D}iscrete {F}racture {N}etwork
  simulations.
\newblock {\em Comput.Meth.Appl.Mech.Engrg.}, 280:135--156, 2014.

\bibitem{BoffiBrezziFortin}
D.~Boffi, F.~Brezzi, and M.~Fortin.
\newblock {\em Mixed finite element methods and applications}, volume~44 of
  {\em Springer Series in Computational Mathematics}.
\newblock Springer, Heidelberg, 2013.

\bibitem{brennerscott}
S.~C. Brenner and L.~R. Scott.
\newblock {\em The mathematical theory of finite element methods}, volume~15 of
  {\em Texts in Applied Mathematics}.
\newblock Springer, New York, third edition, 2008.

\bibitem{Brezzi-Falk-Marini}
F.~Brezzi, R.~S. Falk, and L.~D. Marini.
\newblock Basic principles of mixed virtual element methods.
\newblock {\em ESAIM Math. Model. Numer. Anal.}, 48(4):1227--1240, 2014.

\bibitem{Brezzi-Marini:2012}
F.~Brezzi and L.D. Marini.
\newblock Virtual {E}lement {M}ethod for plate bending problems.
\newblock {\em Comput. Methods Appl. Mech. Engrg.}, 253:455--462, 2012.

\bibitem{Gatica-1}
E.~Caceres and G.N. Gatica.
\newblock A mixed virtual element method for the pseudostress-velocity
  formulation of the {S}tokes problem.
\newblock {\em IMA J. of Numer. Anal.}, 37(1):296--331, 2017.

\bibitem{Stokes:nonconforme}
A.~Cangiani, V.~Gyrya, and G.~Manzini.
\newblock The {N}on {C}onforming {V}irtual {E}lement {M}ethod for the {S}tokes
  {E}quations.
\newblock {\em Siam. J. Numer. Anal.}, 54(6):3411--3435, 2016.

\bibitem{BCP}
H.~Chi, L.~Beir\~ao~da Veiga, and G.H. Paulino.
\newblock Some basic formulations of the virtual element method ({VEM}) for
  finite deformations.
\newblock {\em Comput. Meth. Appl. Mech. Engrg.}, 318:148--192, 2017.

\bibitem{dipietro_krell}
D.~Di~Pietro and S.~Krell.
\newblock A {H}ybrid {H}igh-{O}rder method for the steady incompressible
  {N}avier--{S}tokes problem.
\newblock {\em hal-01349519v2}, 2016.

\bibitem{dipietro_lemaire}
D.~Di~Pietro and S.~Lemaire.
\newblock An extension of the {C}rouzeix--{R}aviart space to general meshes
  with application to quasi-incompressible linear elasticity and {S}tokes flow.
\newblock {\em Math. Comp.}, 84(291):1--31, 2015.

\bibitem{dipietro_linke}
D.~A. Di~Pietro, A.~Ern, A.~Linke, and F.~Schieweck.
\newblock A discontinuous skeletal method for the viscosity-dependent {S}tokes
  problem.
\newblock {\em Comput. Meth. Appl. Mech. Engrg.}, 306:175--195, 2016.

\bibitem{GTP14}
A.~L. Gain, C.~Talischi, and G.~H. Paulino.
\newblock On the virtual element method for three-dimensional linear elasticity
  problems on arbitrary polyhedral meshes.
\newblock {\em Comput. Methods Appl. Mech. Engrg.}, 282:132--160, 2014.

\bibitem{Topology-VEM}
A.L. Gain, G.H. Paulino, S.D. Leonardo, and I.F.M. Menezes.
\newblock Topology optimization using polytopes.
\newblock {\em Comput. Methods Appl. Mech. Engrg.}, 293:411--430, 2015.

\bibitem{gardini}
F.~Gardini and G.~Vacca.
\newblock {V}irtual {E}lement {M}ethod for {S}econd {O}rder {E}lliptic
  {E}igenvalue {P}roblems.
\newblock {\em Preprint, arxiv:1610.03675}, 2016.

\bibitem{giraultbook}
V.~Girault and P.~A. Raviart.
\newblock {\em Finite element methods for {N}avier-{S}tokes equations},
  volume~5 of {\em Springer Series in Computational Mathematics}.
\newblock Springer-Verlag, Berlin, 1986.
\newblock Theory and algorithms.

\bibitem{benchmark}
A.~Linke and C.~Merdon.
\newblock On velocity errors due to irrotational forces in the
  {N}avier-{S}tokes momentum balance.
\newblock {\em J. Comput. Phys.}, 313:654--661, 2016.

\bibitem{Maday_Quarteroni}
Y.~Maday and A.~Quarteroni.
\newblock Approximation of {B}urgers' equation by pseudo-spectral methods.
\newblock {\em RAIRO-Analyse num{\'e}rique}, 16(4):375--404, 1982.

\bibitem{Manzini:Russo:Sukumar}
G.~Manzini, A.~Russo, and N.~Sukumar.
\newblock New perspectives on polygonal and polyhedral finite element methods.
\newblock {\em Math. Models Methods Appl. Sci.}, 24(8):1665--1699, 2014.

\bibitem{Steklov-VEM}
D.~Mora, G.~Rivera, and R.~Rodr{\'{\i}}guez.
\newblock A virtual element method for the {S}teklov eigenvalue problem.
\newblock {\em Math. Models Methods Appl. Sci.}, 25(8):1421--1445, 2015.

\bibitem{senatore}
A.~Ortiz-Bernardin, A.~Russo, and N.~Sukumar.
\newblock Consistent and stable meshfree {G}alerkin methods using the virtual
  element decomposition.
\newblock {\em Int. J. Numer. Meth. Engng}, 2017.

\bibitem{Helmholtz-VEM}
I.~Perugia, P.~Pietra, and A.~Russo.
\newblock A plane wave virtual element method for the {H}elmholtz problem.
\newblock {\em ESAIM Math. Model. Numer. Anal.}, 50(3):783--808, 2016.

\bibitem{qiu_shi}
W.~Qiu and K.~Shi.
\newblock A superconvergent {HDG} method for the incompressible
  {N}avier--{S}tokes equations on general polyhedral meshes.
\newblock {\em IMA J. of Numer. Anal.}, 36(4):1943--1967, 2016.

\bibitem{saffman1971boundary}
P.~G. Saffman.
\newblock On the boundary condition at the surface of a porous medium.
\newblock {\em Studies in Appl. Math.}, 50(2):93--101, 1971.

\bibitem{TPPM12}
C.~Talischi, G.~H. Paulino, A.~Pereira, and I.~F.M~. Menezes.
\newblock Polymesher: a general-purpose mesh generator for polygonal elements
  written in matlab.
\newblock {\em Struct. Multidisc Optimiz.}, 45(3):309--328, 2012.

\bibitem{vaccahyper}
G.~Vacca.
\newblock Virtual {E}lement {M}ethods for hyperbolic problems on polygonal
  meshes.
\newblock {\em Comput. Math. Appl.}, 2016.

\bibitem{preprintdarcy}
G.~Vacca.
\newblock An ${H}^1$-conforming {V}irtual {E}lement {M}ethod for {D}arcy and
  {B}rinkman equations.
\newblock {\em Preprint arxiv:1701.07680}, 2017.

\bibitem{wriggers}
P.~Wriggers, W.T. Rust, and B.D. Reddy.
\newblock A virtual element method for contact.
\newblock {\em Comput Mech}, 58(6):1039--1050, 2016.

\bibitem{plates-zhao}
J.~Zhao, S.~Chen, and B.~Zhang.
\newblock The nonconforming virtual element method for plate bending problems.
\newblock {\em Math. Mod.and Meth. in Appl. Sci.}, 26(09):1671--1687, 2016.

\end{thebibliography}

%%%%%%%%%%%%%%%%%%%%%%%%%%%%%%%%%%%%%%%%%%%%%%%%%%%%%%%%%%%%%%%%%%%%%%%
\end{document}